\let\svthefootnote\thefootnote
\newcommand\blankfootnote[1]{%
  \let\thefootnote\relax\footnotetext{#1}%
  \let\thefootnote\svthefootnote%
}
\newcommand\blfootnote[1]{%
  \begingroup
  \renewcommand{\@makefntext}[1]{\noindent\makebox[0.5em][r]#1}
  \renewcommand\thefootnote{}\footnote{#1}%
  \addtocounter{footnote}{-1}%
  \endgroup
}
\newtheorem{theorem}{Theorem}[section]
\newtheorem{lemma}[theorem]{Lemma}
\newtheorem{proposition}[theorem]{Proposition}
\newtheorem{corollary}[theorem]{Corollary}
\theoremstyle{remark}
\newtheorem{definition}[theorem]{Definition}
\newtheorem{remark}[theorem]{Remark}
\newtheorem{remarks}[theorem]{Remarks}
\newtheorem{notation}[theorem]{Notation}
\def\NN{{\mathbb N}}
\def\ZZ{{\mathbb Z}}
\def\C{{\mathcal C}}
\def\D{{\mathcal D}}
\def\E{{\mathcal E}}
\def\F{{\mathcal F}}
\def\N{{\mathcal N}}
\def\SC{{\mathcal S}}
\def\T{{\mathcal T}}
\def\X{{\mathcal X}}
\def\Z{{\mathcal Z}}
\def\sX{{\mathscr X}}
\newcommand{\Coin}{\operatorname{Coin}}
\newcommand{\Dom}{\operatorname{Dom}}
\newcommand{\Image}{\operatorname{Im}}
\newcommand{\Supp}{\operatorname{Supp}}
\newcommand{\powerset}{\raisebox{.15\baselineskip}{\Large\ensuremath{\wp}}}
\newcommand{\defhypergraph}{X := \big(V(X),E(X)\big)}
\def\namedlabel#1#2{\begingroup
   \def\@currentlabel{#2}%
   \label{#1}\endgroup
}
\title{\LARGE Transformations on hypergraph families}
\author{\normalsize Sean T. Vittadello}
\begin{document}
\makeatletter
\begin{titlepage}
\thispagestyle{specialfooter}
\blankfootnote{\textbf{Address}: School of Mathematics and Statistics \& School of BioSciences, The University of Melbourne, Parkville, Victoria 3010, Australia}
\blankfootnote{\textbf{E-mail address}: sean.vittadello@unimelb.edu.au}
\blankfootnote{\textbf{Key words and phrases}: Hypergraph transformation, hypergraph family, function-based transformation, quotient transformation}
\centering
\@title\\
\vspace{1.5cm}
\@author\\
\vspace{1.5cm}
\cleanlookdateon
\vspace{0mm}
\begin{abstract}
\begin{normalsize}\noindent
We present a new general theory of function-based hypergraph transformations on finite families of finite hypergraphs. A function-based hypergraph transformation formalises the action of structurally modifying hypergraphs from a family in a consistent manner. The mathematical form of the transformations facilitates their analysis and incorporation into larger mathematical structures, and concurs with the function-based nature of modelling in the physical world. Since quotients of hypergraphs afford their simplification and comparison, we also discuss the notion of a quotient hypergraph transformation induced by an equivalence relation on the vertex set of a hypergraph family. Finally, we demonstrate function-based hypergraph transformations with two fundamental classes of examples involving the addition or deletion of hyperedges or hypergraphs.
\end{normalsize}
\end{abstract}
\end{titlepage}
\makeatother

\section{Introduction}
A hypergraph transformation acts on a given hypergraph to yield a new hypergraph with specific modifications, such as the addition or deletion of a hyperedge or the replacement of a subhypergraph by another subhypergraph. Hypergraph transformations therefore provide a formal description of the structural relationship between two hypergraphs, more general than the transformations induced by hypergraph homomorphisms, and find broad application in both pure and applied mathematics including spectral graph theory \cite{Merris1998}, graph similarity \cite{Pfaltz2018}, the maximum stable set problem \cite{Lozin2011}, network analysis \cite{Blinov2006,Bunimovich2012,Andersen2013}, engineering design \cite{Voss2023}, and theoretical computer science \cite{Rozenberg1997,Ehrig2006,Heckel2006}. The form in which a hypergraph transformation is expressed depends on the application, and may be descriptive, rule based, or function based. Rule-based hypergraph transformations, which modify hypergraphs algorithmically, have received the most attention because of their utility in theoretical computer science \cite{Rozenberg1997}.


Hypergraphs are increasingly finding application in many areas of natural science as models of complex systems with higher-order interactions \cite{Klamt2009,Feng2021}, where hypergraph transformations can represent dynamic system behaviour \cite{Rossello2004,Yadav2004,Rossello2005}. Function-based representations of physical phenomena are fundamental in natural science, so expressing hypergraph transformations as functions aligns with the general mathematical formalism employed in mathematical modelling. Further, function-based hypergraph transformations can be readily incorporated into larger mathematical objects with additional structure, allowing for the development of more detailed mathematical models and the application of new techniques for mathematical analysis: for example, a partially ordered set of hypergraph transformations may represent a hierarchical system of dynamic processes. While some instances of function-based hypergraph transformations exist \cite{Pfaltz2018}, no general theory for function-based hypergraph transformations that is suitable within the context of natural science has been described in the literature.

In this article we introduce and develop a new approach to function-based hypergraph transformations, where each transformation is defined on a finite family of finite hypergraphs and acts consistently on all hypergraphs in its domain. Given the importance of the concept of a quotient hypergraph, which can assist with the simplification and comparison of hypergraphs, we also consider the notion of a quotient hypergraph transformation. We illustrate the general theory with two fundamental classes of examples involving the addition or deletion of hyperedges and the addition or deletion of hypergraphs.


\section{Preliminaries}\label{sec:Preliminaries}
In this section we discuss preliminary notation, definitions, and results. We begin with a review of hypergraphs, their substructures, and connectivity.

\begin{notation}[\textbf{Sets}]
Denote the set of positive integers by $\NN$, the set of nonnegative integers by $\NN_0$, and $[n] := \{\, m \le n \mid m \in \NN \,\}$ for $n \in \NN_0$. Given a set $S$ we denote by $\powerset (S)$ the power set of $S$.
\end{notation}

\begin{definition}[\textbf{Hypergraphs}]
A \emph{hypergraph} is a 2-tuple $\defhypergraph$ of finite sets where $V(X)$ is the \emph{vertex set} and $E(X) \subseteq \powerset \big(V(X)\big)$ is the \emph{hyperedge set}. We exclude the empty hyperedge, hence $\lvert e \rvert \ge 1$, and allow multiple hyperedges. A \emph{loop} is a hyperedge that is a singleton set, so \emph{allowing loops} (resp. \emph{disallowing loops}) assumes $\lvert e \rvert \ge 1$ (resp. $\lvert e \rvert \ge 2$) for all $e \in E(X)$. If $V(X) = \emptyset$, so that $E(X) = \emptyset$, then $X$ is called the \emph{null hypergraph} and is denoted by $\mathcal{N}$.

Given a hypergraph $X$, a \emph{vertex labelling} (resp. \emph{hyperedge labelling}) of $X$ is an injective function $\ell_V \colon V(X) \to L_V$ (resp. $\ell_E \colon E(X) \to L_E$), where $L_V$ and $L_E$ are nonempty disjoint sets of labels. A vertex-labelled hypergraph therefore has all vertices uniquely labelled, and a hyperedge-labelled hypergraph has all hyperedges, including multiple hyperedges, uniquely labelled.

We henceforth fix nonempty disjoint sets of labels $L_V$ and $L_E$, and denote by $\sX$ the universe of all vertex- and hyperedge-labelled hypergraphs over $L_V$ and $L_E$. We refer to $\sX$ as a \emph{hypergraph family}. Since we only consider vertex- and hyperedge-labelled hypergraphs in this work we refer to them simply as hypergraphs without mentioning the labelling functions explicitly. For any subset $\X \subseteq \sX$, denote $V(\X) := \{\, v \in V(X) \mid X \in \X \,\}$, $E(\X) := \{\, e \in E(X) \mid X \in \X \,\}$, and $\X^{\ast} := \X \setminus \{\N\}$. If $\X$ is a singleton set, say $\X = \{X\}$, then we may denote $\X$ simply by $X$.
\end{definition}

While we consider hypergraphs without directed hyperedges, they could readily be included. We now consider substructures and connectivity in hypergraphs \cite{Dewar2018}.

\begin{definition}[\textbf{Strong subhypergraphs}]
Let $X$, $Y \in \sX$. Then $X$ is a \emph{strong subhypergraph} of $Y$ if $V(X) \subseteq V(Y)$ and $E(X) \subseteq E(Y)$. In this case we say that $Y$ contains $X$, and if $X \ne Y$ then the containment is \emph{proper}. The strong subhypergraph $X$ of $Y$ is \emph{induced} by $V(X)$ if $E(X) = \big\{\, e \in E(Y) \mid e \subseteq V(X) \,\big\}$. Note that the null hypergraph $\N$ is a strong subhypergraph of every hypergraph. If $\F$ is a family of strong subhypergraphs of $Y$ then $X \in \F$ is \emph{maximal} in $\F$ if no hypergraph in $\F$ properly contains $X$.
\end{definition}

\begin{definition}[\textbf{Connectivity}]
Let $X \in \sX$. A \emph{walk} in $X$ is a nonempty alternating sequence $(v_0,e_1,v_1,\ldots,e_m,v_m)$ of vertices and hyperedges in $X$ such that:
\begin{enumerate*}[label=(\arabic*)]
\item $v_i \in V(X)$ for all $i \in [m] \cup \{0\}$;
\item $e_i \in E(X)$ for all $i \in [m]$;
\item $v_i$, $v_{i+1} \in e_{i+1}$ for all $0 \le i \le m-1$.
\end{enumerate*}
If the $m+1$ vertices are pairwise distinct and the $m$ hyperedges are pairwise distinct then the walk is a \emph{path}. Note that a trivial walk $(v_0)$ is a path.

Two vertices $v$, $w \in V(X)$ are \emph{connected} in $X$ if there exists a path in $X$ from $v$ to $w$. The hypergraph $X$ is \emph{connected} if it is nonnull and if every pair of vertices in $X$ is connected, otherwise $X$ is \emph{disconnected}. A \emph{connected component}, or simply \emph{component}, of $X$ is a connected strong subhypergraph of $X$ that is maximal in the family of connected strong subhypergraphs of $X$. Note that the null hypergraph is disconnected and has no components. We denote by $\C(X)$ the set of connected components of $X$. Further, for $\X \subseteq \sX$ we denote $\C(\X) := \bigcup_{X \in \X} \C(X)$.
\end{definition}

\begin{definition}[\textbf{Disjointness, union}]
Let $X$, $Y \in \sX$. Then $X$ and $Y$ are \emph{vertex disjoint}, or simply \emph{disjoint}, if $V(X) \cap V(Y) = \emptyset$ (which implies $E(X) \cap E(Y) = \emptyset$), and are \emph{component disjoint} if $\C(X) \cap \C(Y) = \emptyset$. Note that vertex disjointness implies component disjointness. Further, the \emph{hypergraph union} of $X$ and $Y$ is the hypergraph $X \cup Y$ with $V(X \cup Y) = V(X) \cup V(Y)$ and $E(X \cup Y) = E(X) \cup E(Y)$.
\end{definition}

We define a direct sum of hypergraphs, which corresponds to the similar notion for graphs \cite{Knuth1994}, and provides a convenient decomposition of hypergraphs.

\begin{definition}[\textbf{Direct sum of hypergraphs}]
Let $\{X_i\}_{i \in [m]} \subseteq \sX$ be a subset of pairwise disjoint hypergraphs with $m \in \NN_0$. The \emph{direct sum} of the hypergraphs $\{X_i\}_{i \in [m]}$, denoted by $X_1 \oplus \cdots \oplus X_m$, $\bigoplus_{i \in [m]} X_i$, or $\bigoplus \{X_i\}_{i \in [m]}$, is the hypergraph with $V(X_1 \oplus \cdots \oplus X_m) := \bigcup_{i \in [m]} V(X_i)$ and $E(X_1 \oplus \cdots \oplus X_m) := \bigcup_{i \in [m]} E(X_i)$.
\end{definition}

Note the following: the direct sum of hypergraphs is independent of the order of the hypergraph summands; $X \in \sX$ has the direct sum decomposition $X = \bigoplus_{C \in \C(X)} C = \bigoplus \C(X)$; if $m = 0$ then the set of pairwise disjoint hypergraphs is empty, so the direct sum has no summands, hence the direct sum is the null hypergraph $\N$.

We also define a notion of the \emph{direct difference} of hypergraphs.

\begin{definition}[\textbf{Direct difference of hypergraphs}]
Let $X$, $Y$, $Z \in \sX$ where $Y$ and $Z$ are disjoint and $X = Y \oplus Z$. The \emph{direct difference} of the hypergraphs $X$ and $Z$, denoted by $X \ominus Z$, is $Y = X \ominus Z$. Since a direct sum of hypergraphs is independent of the order of the hypergraph summands we also have $Z = X \ominus Y$.
\end{definition}

\begin{proposition}\label{prop:DirectDiff}
Let $X$, $Y$, $Z \in \sX$.
\begin{enumerate}[label=(\arabic*)]
\item If $Y$ and $Z$ are disjoint and $X = Y \oplus Z$ then $X = (X \ominus Z) \oplus Z$ and $Y = (Y \oplus Z) \ominus Z$. \label{prop:DirectDiff1}
\item If $Y$ and $Z$ are disjoint, $\C(Y) \subseteq \C(X)$, and $\C(Z) \subseteq \C(X)$ then $(X \ominus Y) \ominus Z = (X \ominus Z) \ominus Y$. \label{prop:DirectDiff2}
\item If $Y$ is disjoint with both $X$ and $Z$, and $\C(Z) \subseteq \C(X)$ then $(X \oplus Y) \ominus Z = (X \ominus Z) \oplus Y$. \label{prop:DirectDiff3}
\end{enumerate}
\end{proposition}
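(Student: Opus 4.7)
The plan is to reduce everything to the component decomposition $X = \bigoplus \C(X)$ noted after the definition of direct sum, and to manipulate sets of components rather than the hypergraphs themselves. For each of the three parts, the only substantive check beyond unwrapping the definition of $\ominus$ will be to verify that the direct sums I write down involve pairwise vertex-disjoint summands.

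Part \ref{prop:DirectDiff1} I would handle by directly applying the definition: since $Y$ and $Z$ are disjoint with $X = Y \oplus Z$, the definition of the direct difference gives $X \ominus Z = Y$, from which both claimed equalities follow in one line each.

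For part \ref{prop:DirectDiff2} I would introduce $U := \bigoplus\bigl(\C(X) \setminus (\C(Y) \cup \C(Z))\bigr)$. The hypotheses $\C(Y), \C(Z) \subseteq \C(X)$, together with $\C(Y) \cap \C(Z) = \emptyset$ (a consequence of $Y$ and $Z$ being vertex-disjoint), yield the decomposition $X = Y \oplus Z \oplus U$ with all three summands pairwise disjoint. Then $X \ominus Y = Z \oplus U$ by definition, and a second application gives $(X \ominus Y) \ominus Z = U$. By symmetry in $Y$ and $Z$ the same computation produces $(X \ominus Z) \ominus Y = U$, establishing the equality.

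For part \ref{prop:DirectDiff3} I would write $X = Z \oplus W$ with $W := \bigoplus\bigl(\C(X) \setminus \C(Z)\bigr)$, so that $X \ominus Z = W$. Because $\C(Z) \subseteq \C(X)$ forces $V(Z) \subseteq V(X)$, the disjointness of $Y$ from $X$ implies $Y$ is disjoint from both $Z$ and $W$. Hence $X \oplus Y = Z \oplus (W \oplus Y)$ is a legitimate direct sum decomposition, and stripping off $Z$ yields $(X \oplus Y) \ominus Z = W \oplus Y = (X \ominus Z) \oplus Y$. The main obstacle throughout is really just bookkeeping: confirming that every direct sum used is between pairwise disjoint hypergraphs. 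This is where the component-set hypotheses in \ref{prop:DirectDiff2} and \ref{prop:DirectDiff3} are doing the real work, and it is the only place careful attention is needed.
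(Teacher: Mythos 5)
Your proposal is correct. Parts~\ref{prop:DirectDiff1} and \ref{prop:DirectDiff2} follow essentially the same route as the paper: part~\ref{prop:DirectDiff1} is a one-line unwinding of the definition, and your $U = \bigoplus\bigl(\C(X)\setminus(\C(Y)\cup\C(Z))\bigr)$ in part~\ref{prop:DirectDiff2} is the same object as the paper's $W = \bigcup \C(X)\setminus\C(Y\oplus Z)$, used in the same way. Part~\ref{prop:DirectDiff3} is where you genuinely diverge: the paper sets $W := \bigcup\C(X\oplus Y)\setminus\C(Z)$ and then reaches the conclusion indirectly, computing $X\ominus Z = \bigl((W\oplus Z)\ominus Y\bigr)\ominus Z$ and invoking parts~\ref{prop:DirectDiff2} and \ref{prop:DirectDiff1} to commute and cancel the differences, whereas you decompose $X = Z\oplus W$ with $W := \bigoplus\bigl(\C(X)\setminus\C(Z)\bigr)$ and read off $(X\oplus Y)\ominus Z = W\oplus Y$ directly from the regrouped sum $X\oplus Y = Z\oplus(W\oplus Y)$. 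Your version is shorter and more elementary---it needs only the definition of $\ominus$ plus the disjointness bookkeeping you flag (that $Y$ is disjoint from both $W$ and $Z$ because $V(W), V(Z)\subseteq V(X)$), while the paper's version showcases how the three parts chain together algebraically. Both are valid; nothing is missing from yours.
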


\begin{proof}
\ref{prop:DirectDiff1} Since $Y = X \ominus Z$ it follows that $X = Y \oplus Z = (X \ominus Z) \oplus Z$ and $Y = X \ominus Z = (Y \oplus Z) \ominus Z$.

\ref{prop:DirectDiff2} Let $W := \bigcup \C(X) \setminus \C(Y \oplus Z)$ so that $X = W \oplus (Y \oplus Z) = W \oplus (Z \oplus Y)$, then $(X \ominus Z) \ominus Y = W = (X \ominus Y) \ominus Z$.

\ref{prop:DirectDiff3} Let $W := \bigcup \C(X \oplus Y) \setminus \C(Z)$, so that $X \oplus Y = W \oplus Z$, and then $W = (X \oplus Y) \ominus Z$. Now, $X = (W \oplus Z) \ominus Y$, hence $X \ominus Z = \big((W \oplus Z) \ominus Y\big) \ominus Z = \big((W \oplus Z) \ominus Z\big) \ominus Y = W \ominus Y$, where the second equality follows from Part~\ref{prop:DirectDiff2} of this proposition and the third equality follows from Part~\ref{prop:DirectDiff1} of this proposition, therefore $W = (X \ominus Z) \oplus Y$. We conclude that $(X \oplus Y) \ominus Z = (X \ominus Z) \oplus Y$.
\end{proof}

Our definition of a quotient hypergraph is standard.

\begin{definition}[\textbf{Quotient hypergraph}]\label{def:QuotHyp}
Suppose $X \in \sX$ and $R$ is an equivalence relation on $V(X)$. The \emph{quotient hypergraph} of $X$ under $R$, denoted $X/R$, is the hypergraph where:
\begin{enumerate}[label={(\arabic*)}]
\item $V(X/R) = \big\{\, [v]_R \mid v \in V(X)\,\big\}$, where $[\cdot]_R$ denotes the equivalence class under $R$.
\item $E(X/R) \subseteq \powerset \big(V(X/R)\big)$, where $\big\{[v_i]_R\big\}_{i=1}^n \in E(X/R)$ for $n \in \NN$ (resp. $n\ge2$ if we disallow loops) if and only if there exists $e \in E(X)$ such that
	\begin{enumerate*}[label={(\arabic*)}]
	\item $e \cap [v_i]_R \ne \emptyset$ for all $1 \le i \le n$, and
	\item $e \subseteq \bigcup_{i=1}^n [v_i]_R$.
	\end{enumerate*}
\end{enumerate}
The map $\theta \colon V(X) \to V(X/R)$ such that $\theta(v) = [v]_R$ for $v \in V(X)$ is the \emph{projection}.
\end{definition}

We will have occasion to construct hypergraphs that are related to quotient hypergraphs, however with vertices consisting of equivalence classes in more general underlying vertex sets. For this we introduce the notions of a \emph{vertex-augmented hypergraph} and a \emph{vertex-augmented quotient hypergraph}.

\begin{definition}[\textbf{Vertex-augmented hypergraph and quotient hypergraph}]\label{def:VaugH}
Suppose $F \subseteq V(\sX)$ and $X \in \sX$ with $V(X) \subseteq F$. The hypergraph $Z := \big(F,E(X)\big)$ is the \emph{vertex-augmented hypergraph} of $X$ with respect to $F$. If $R_F$ is an equivalence relation on $F$, with corresponding equivalence classes denoted $[\cdot]_{R_F}$, then the \emph{vertex-augmented quotient hypergraph} of $X$ under $R_F$, denoted $X//R_F$, is the hypergraph where $V(X//R_F) := \big\{\, [v]_{R_F} \mid v \in V(X) \,\big\}$ and $E(X//R_F) := E(Z/R_F)$. The map $\theta \colon V(X) \to V(X//R_F)$ such that $\theta(v) = [v]_{R_F}$ for $v \in V(X)$ is the \emph{projection}.
\end{definition}

The following proposition establishes a canonical isomorphism between a given vertex-augmented quotient hypergraph and the corresponding quotient hypergraph under the restricted equivalence relation. We formalise this result to show the explicit relationship between the two forms of quotient hypergraphs.

\begin{proposition}\label{prop:IsoQuot}
Suppose $F \subseteq V(\sX)$, $X \in \sX$ with $V(X) \subseteq F$, $R_F$ is an equivalence relation on $F$, and $R$ is the restriction of $R_F$ to $V(X)$. Then the vertex map $\phi \colon V(X/R) \to V(X//R_F)$ given by $\phi \big([v]_R\big) = [v]_{R_F}$ for $[v]_R \in V(X/R)$ is an isomorphism from the quotient hypergraph $X/R$ onto the vertex-augmented quotient hypergraph $X//R_F$, where the inverse map $\phi^{-1}$ satisfies $\phi^{-1} \big([w]_{R_F}\big) = [v]_R$ for $[w]_{R_F} \in V(X//R_F)$ and any representative $v \in [w]_{R_F} \cap V(X)$.
\end{proposition}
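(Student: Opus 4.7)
The plan is to verify in turn that $\phi$ is well-defined, bijective on vertices, agrees with the stated inverse formula, and finally preserves the hyperedge set. The linchpin of the argument is the identity $[v]_R = [v]_{R_F} \cap V(X)$ for every $v \in V(X)$, which holds because $R$ is the restriction of $R_F$ to $V(X)$.

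First I would check well-definedness and injectivity together. If $[v]_R = [u]_R$, then $(v,u) \in R \subseteq R_F$, so $[v]_{R_F} = [u]_{R_F}$, hence $\phi$ is well-defined. Conversely, if $\phi([v]_R) = \phi([u]_R)$ with $v, u \in V(X)$, then $u \in [v]_{R_F} \cap V(X) = [v]_R$, giving $[v]_R = [u]_R$. Surjectivity is immediate from the definition $V(X//R_F) = \{[v]_{R_F} \mid v \in V(X)\}$. The inverse formula then follows: given $[w]_{R_F} \in V(X//R_F)$, pick any $v \in [w]_{R_F} \cap V(X)$ (which is nonempty by definition of $V(X//R_F)$); then $\phi([v]_R) = [v]_{R_F} = [w]_{R_F}$, and any other representative $v' \in [w]_{R_F} \cap V(X)$ satisfies $(v,v') \in R_F$, hence $(v,v') \in R$ and $[v']_R = [v]_R$, so the formula is independent of the chosen representative.

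The main obstacle, and the only nontrivial step, is verifying that $\phi$ carries $E(X/R)$ bijectively onto $E(X//R_F)$. Since $\phi$ is a vertex bijection, it suffices to show that for each finite family $\{[v_i]_R\}_{i=1}^n$ of vertices in $X/R$, the set $\{[v_i]_R\}_{i=1}^n$ is a hyperedge of $X/R$ if and only if $\{[v_i]_{R_F}\}_{i=1}^n$ is a hyperedge of $X//R_F$. By Definition~\ref{def:VaugH}, the latter means the same as $\{[v_i]_{R_F}\}_{i=1}^n \in E(Z/R_F)$ where $Z = (F, E(X))$, so both conditions are characterised by the existence of some $e \in E(X)$ satisfying the intersection and containment conditions of Definition~\ref{def:QuotHyp}, but with respect to the classes $[v_i]_R$ and $[v_i]_{R_F}$ respectively. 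The key reduction is that any $e \in E(X)$ satisfies $e \subseteq V(X)$, so for each $i$ we have $e \cap [v_i]_{R_F} = e \cap [v_i]_{R_F} \cap V(X) = e \cap [v_i]_R$, and likewise $e \subseteq \bigcup_{i=1}^n [v_i]_{R_F}$ is equivalent to $e \subseteq \bigcup_{i=1}^n [v_i]_R$ because every element of $e$ lies in $V(X)$. Hence the two hyperedge conditions are witnessed by exactly the same $e \in E(X)$, and the equivalence follows.

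Combining the vertex bijection with the hyperedge correspondence shows $\phi$ is an isomorphism from $X/R$ onto $X//R_F$ with inverse as described.
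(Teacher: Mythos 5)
Your proposal is correct and follows essentially the same route as the paper's own (much terser) proof: a vertex bijection established via the identity $[v]_R = [v]_{R_F} \cap V(X)$, followed by the observation that the hyperedge conditions of Definition~\ref{def:QuotHyp} for $X/R$ and for $Z/R_F$ with $Z = (F,E(X))$ are witnessed by the same $e \in E(X)$ because $e \subseteq V(X)$. Your write-up simply makes explicit the details that the paper leaves implicit, particularly in the hyperedge correspondence step.
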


\begin{proof}
$\phi$ is well defined and injective since $[v]_R = [w]_R$ if and only if $[v]_{R_F} = [w]_{R_F}$, for $v$, $w \in V(X)$, and it follows from the definition of $V(X//R_F)$ that $\phi$ is surjective. If $f \subseteq V(X/R)$ then $f \in E\big(X/R\big)$ if and only if $\phi(f) \in E(X//R_F)$, so it follows that $\phi$ is an isomorphism. The inverse map $\phi^{-1}$ is well defined since for any two representative elements $u$, $v \in [w]_{R_F} \cap V(X)$ we have $[u]_R = [v]_R$.
\end{proof}

\begin{notation}
Let $R_{V(\sX)}$ be an equivalence relation on $V(\sX)$. For $\X \subseteq \sX$ we denote by $\X/R_{V(\sX)} := \{\, X//R_{V(\sX)} \mid X \in \X \,\}$ the corresponding collection of vertex-augmented quotient hypergraphs under $R_{V(\sX)}$.
\end{notation}

\section{Hypergraph transformations}\label{sec:Transformations}

\subsection{Definition and basic properties}\label{subsec:Transformations_def}

Our definition of a hypergraph transformation requires the following notion of maximality.

\begin{definition}[\textbf{Component-maximal set of hypergraphs}]\label{def:CompMax}
Suppose $\SC \subseteq  \X \subseteq \sX$. We say that $\SC$ is \emph{component maximal} in $\X$ if for each $X \in \X$ there exists a subset $\D_X \subseteq \SC$, called an \emph{$\SC$-maximal subset}, such that:
\begin{enumerate}[label=(\arabic*)]
\item $\D_X$ is pairwise component disjoint. \label{def:CompMax1}
\item If $\N \in \SC$ then $\N \in \D_X$. \label{def:CompMax2}
\item $\C(T) \subseteq \C(X)$ for all $T \in \D_X$. \label{def:CompMax3}
\item If $S \in \SC$ and $\C(S) \subseteq \C(X)$ then there exists $T \in \D_X$ such that $\C(S) \subseteq \C(T)$. \label{def:CompMax4}
\end{enumerate}
\end{definition}

\begin{proposition}\label{prop:CompMax}
Suppose $\SC \subseteq  \X \subseteq \sX$ where $\SC$ is component maximal in $\X$ with $\SC$-maximal subsets $\{\D_X\}_{X \in \X}$. Then:
\begin{enumerate}[label=(\arabic*)]
\item $\D_X$ is unique for all $X \in \X$. \label{prop:CompMax1}
\item If $\N \notin \SC$ then $\D_S = \{S\}$ for all $S \in \SC$. \label{prop:CompMax2}
\item If $\N \in \SC$ then $\D_S = \{\N,S\}$ for all $S \in \SC$, in particular $\D_{\N} = \{\N\}$. \label{prop:CompMax3}
\item If $\N \in \SC$ and $X \in \X$ then $\D_X = \{\N\}$ if and only if $\C(S) \nsubseteq \C(X)$ for all $S \in \SC \setminus \{\N\}$. \label{prop:CompMax4}
\item If $X$, $Y \in \X$, and $\C(S) \subseteq \C(X)$ if and only if $\C(S) \subseteq \C(Y)$ for all $S \in \SC \setminus \{\N\}$, then $\D_X = \D_Y$. \label{prop:CompMax5}
\item If $\N \notin \SC$ then $\D_X = \emptyset$ implies $\C(S) \nsubseteq \C(X)$ for all $S \in \SC$. \label{prop:CompMax6}
\end{enumerate}
\end{proposition}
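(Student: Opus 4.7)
The plan is to grind through each part by direct application of the four defining properties of component maximality (Definition~\ref{def:CompMax}), anchored by the following recurring observation: whenever $T,T'' \in \D_X$ with $T \ne \N$ and $\C(T) \subseteq \C(T'')$, the pairwise component disjointness of $\D_X$ (property~\ref{def:CompMax1}) forces $T = T''$, because $\C(T) \ne \emptyset$ would otherwise witness overlap between distinct members. Combined with the direct sum decomposition $T = \bigoplus \C(T)$, equality of component sets upgrades to equality of hypergraphs.

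For Part~\ref{prop:CompMax1}, I would assume two candidate $\SC$-maximal subsets $\D_X$ and $\D_X'$ and show $\D_X \subseteq \D_X'$ (the reverse inclusion follows by symmetry). For any non-null $T \in \D_X$, property~\ref{def:CompMax3} gives $\C(T) \subseteq \C(X)$, so property~\ref{def:CompMax4} applied to $\D_X'$ yields some $T' \in \D_X'$ with $\C(T) \subseteq \C(T')$; note $T' \ne \N$ since $\C(T) \ne \emptyset$. Running the same argument back with $T'$ produces $T'' \in \D_X$ with $\C(T') \subseteq \C(T'')$, and the recurring observation forces $T = T''$, whence $\C(T) = \C(T')$ and thus $T = T'$. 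Membership of $\N$ in both sets when $\N \in \SC$ is immediate from property~\ref{def:CompMax2}.

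Parts~\ref{prop:CompMax2} and~\ref{prop:CompMax3} follow by applying property~\ref{def:CompMax4} to $\D_S$ with the choice $S \in \SC$: this produces some $T \in \D_S$ with $\C(S) \subseteq \C(T)$, which by property~\ref{def:CompMax3} reverses to $\C(T) = \C(S)$ and hence $T = S$. Pairwise component disjointness then excludes any further non-null element of $\D_S$, leaving exactly $\{S\}$ when $\N \notin \SC$ and $\{\N,S\}$ when $\N \in \SC$ (the special case $S = \N$ reducing $\D_{\N}$ to $\{\N\}$ via property~\ref{def:CompMax3} alone). Part~\ref{prop:CompMax4} falls out similarly: if $\D_X = \{\N\}$, then property~\ref{def:CompMax4} prohibits $\C(S) \subseteq \C(X)$ for any non-null $S \in \SC$; conversely, any putative non-null $T \in \D_X$ would violate the hypothesis via property~\ref{def:CompMax3}. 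For Part~\ref{prop:CompMax5}, I would show each non-null $T \in \D_X$ belongs to $\D_Y$ by the matching argument of Part~\ref{prop:CompMax1}, using the hypothesis to transfer $\C(T) \subseteq \C(X)$ to $\C(T) \subseteq \C(Y)$; Part~\ref{prop:CompMax6} is an immediate contrapositive application of property~\ref{def:CompMax4}.

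The main obstacle is less a conceptual difficulty than a bookkeeping one: every argument must be split along whether $\N \in \SC$ or not, and one must verify that the partner element produced by property~\ref{def:CompMax4} is non-null before invoking the disjointness-based rigidity. Once that discipline is maintained, the proof is a routine verification.
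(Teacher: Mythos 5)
Your proposal is correct and follows essentially the same route as the paper: the same back-and-forth matching argument combining Properties (3) and (4) of Definition~\ref{def:CompMax} with pairwise component disjointness drives Parts (1) and (5), and the remaining parts are the same direct verifications. The only cosmetic difference is in Parts (2)--(3), where the paper exhibits the candidate set $\{S\}$ (resp.\ $\{\N,S\}$) and appeals to the uniqueness established in Part (1), whereas you derive $S \in \D_S$ directly from Property (4) and exclude further non-null elements by disjointness; the two are equivalent and equally routine.
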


\begin{proof}
\ref{prop:CompMax1} Fix $X \in \X$ and suppose that $\D'_X \subseteq \SC$ satisfies Properties~\ref{def:CompMax1} to \ref{def:CompMax4} in Definition~\ref{def:CompMax}. Since $\N \in \D'_X$ if and only if $\N \in \D_X$, it suffices to show that $\D'_X \setminus \{\N\} = \D_X \setminus \{\N\}$. If $S \in \D'_X \setminus \{\N\}$ then $\C(S) \subseteq \C(X)$, hence there exists $T \in \D_X \setminus \{\N\}$ such that $\C(S) \subseteq \C(T)$. Since $\C(T) \subseteq \C(X)$ there exists $S' \in \D'_X \setminus \{\N\}$ such that $\C(T) \subseteq \C(S')$. Then $\C(S) \subseteq \C(S')$ and the pairwise component disjointness of $\D'_X$ imply $S = S'$, hence $S = T \in \D_X \setminus \{\N\}$. We conclude that $\D'_X \setminus \{\N\} \subseteq \D_X \setminus \{\N\}$, where we may have $\D'_X \setminus \{\N\} = \emptyset$. An analogous argument shows that $\D_X \setminus \{\N\} \subseteq \D'_X \setminus \{\N\}$.

\ref{prop:CompMax2} If $\N \notin \SC$ and $S \in \SC$ then $\D := \{S\}$ satisfies Properties~\ref{def:CompMax1} to \ref{def:CompMax4} in Definition~\ref{def:CompMax} with respect to $S$. So, by Part~\ref{prop:CompMax1} of this proposition, we have $\D_S = \{S\}$.

\ref{prop:CompMax3} If $\N \in \SC$ and $S \in \SC \setminus \{\N\}$ then $\D := \{\N,S\}$ satisfies Properties~\ref{def:CompMax1} to \ref{def:CompMax4} in Definition~\ref{def:CompMax} with respect to $S$. So, by Part~\ref{prop:CompMax1} of this proposition, we have $\D_S = \{\N,S\}$. Additionally, if $T \in \D_{\N}$ then $\C(T) \subseteq \C(\N)$, hence $T = \N$, therefore $\D_{\N} = \{\N\}$.

\ref{prop:CompMax4} Let $\N \in \SC$ and $X \in \X$. For the forward direction, suppose $\D_X = \{\N\}$. If $S \in \SC$ and $\C(S) \subseteq \C(X)$ then Property~\ref{def:CompMax4} in Definition~\ref{def:CompMax} implies $\C(S) \subseteq \C(\N)$, hence $S = \N$. Therefore $\C(S) \nsubseteq \C(X)$ for all $S \in \SC \setminus \{\N\}$. For the reverse direction, suppose $\C(S) \nsubseteq \C(X)$ for all $S \in \SC \setminus \{\N\}$. Then $T \in \D_X$ implies $\C(T) \subseteq \C(X)$, by Property~\ref{def:CompMax3} in Definition~\ref{def:CompMax}, so $T = \N$ and hence $\D_X = \{\N\}$.

\ref{prop:CompMax5} Since $\N \in \D_X$ if and only if $\N \in \D_Y$, it suffices to show that $\D_X \setminus \{\N\} = \D_Y \setminus \{\N\}$. If $S \in \D_X \setminus \{\N\}$ then $\C(S) \subseteq \C(X)$, hence $\C(S) \subseteq \C(Y)$. So by Property~\ref{def:CompMax4} in Definition~\ref{def:CompMax} there exists $T \in \D_Y \setminus \{\N\}$ such that $\C(S) \subseteq \C(T) \subseteq \C(Y)$. Then $\C(T) \subseteq \C(X)$ so by Property~\ref{def:CompMax4} in Definition~\ref{def:CompMax} there exists $S' \in \D_X \setminus \{\N\}$ such that $\C(T) \subseteq \C(S') \subseteq \C(X)$. Since $\C(S) \subseteq \C(S')$, the pairwise component-disjointness of $\D_X$ implies $S = S'$ and hence $S = T \in \D_Y \setminus \{\N\}$. Thus $\D_X \setminus \{\N\} \subseteq \D_Y \setminus \{\N\}$. An analogous argument gives $\D_Y \setminus \{\N\} \subseteq \D_X \setminus \{\N\}$.

\ref{prop:CompMax6} We prove the contrapositive, so suppose there exists $S \in \SC$ such that $\C(S) \subseteq \C(X)$. Then by Property~\ref{def:CompMax4} in Definition~\ref{def:CompMax} there exists $T \in \D_X$ such that $\C(S) \subseteq \C(T)$, hence $\D_X \ne \emptyset$.
\end{proof}

Our definition of a \emph{hypergraph transformation} on $\sX$ ensures that the transformation acts consistently with respect to specified, or \emph{distinguished}, hypergraphs. We regard connected components as the fundamental units of hypergraphs, since a hypergraph can be decomposed into a direct sum of connected components, and modifying a particular connected component of a hypergraph has no effect on the relations described by any other connected component.

A \emph{partial transformation} on $\sX$ is a map $\pi \colon \X \to \sX$ where $\X \subseteq \sX$. The domain of $\pi$ is $\Dom(\pi) = \X$ and the image of $\pi$ is $\Image(\pi)$. A partial transformation therefore corresponds to a map between subsets of $\sX$.

\begin{definition}[\textbf{Hypergraph transformation}]\label{def:Ht}
A \emph{hypergraph transformation} on $\sX$ is a 3-tuple $\T := (\X,\pi,\SC)$ where $\pi \colon \X \to \sX$ is a partial transformation on $\sX$ and $\SC \subseteq \X$ is a collection of distinguished hypergraphs, satisfying each of the following conditions:
\begin{enumerate}[label={(\arabic*)}]
\item (Nonredundancy) $\C(S) \cap \C \big(\pi(S)\big) = \emptyset$ for all $S \in \SC$, and if $\N \in \SC$ then $\pi(\N) \ne \N$. \label{Ht1}
\item (Maximality) $\SC$ is component maximal in $\X$, with $\SC$-maximal subsets $\{\D_X\}_{X \in \X}$. \label{Ht2}
\item (Direct sum decomposition is preserved) For each $X \in \X$:
	\begin{enumerate}[label={(\alph*)}]
	\item Defining the set $\SC_X := \big\{\, S \in \D_X \mid V\big(\pi(S)\big) \cap V(X \ominus S) = \emptyset \,\big\}$, the set $\pi(\SC_X)$ consists of pairwise vertex-disjoint hypergraphs and $\lvert \pi(\SC_X) \rvert = \lvert \SC_X \rvert$.
	\item Denoting by $\widebar{X} \in \sX$ the induced strong subhypergraph of $X$ where $\widebar{X} := X \ominus (\bigoplus_{S \in \SC_X} S)$, the decomposition $X = \widebar{X} \oplus (\bigoplus_{S \in \SC_X} S)$ is preserved by $\pi$ to give $\pi(X) = \widebar{X} \oplus \big(\bigoplus_{S \in \SC_X} \pi(S)\big)$.
	\end{enumerate}\label{Ht3}
\end{enumerate}
\end{definition}

\begin{remarks}
With regard to Definition~\ref{def:Ht}:
\begin{enumerate}[label={(\arabic*)}]
\item Condition~\ref{Ht1} says that no connected component of a distinguished hypergraph $S \in \SC$ is fixed under $\pi$, ensuring that all components of $S$ are modified by $\pi$ and none are redundant. Further, if $\N \in \SC$ then $\N$ is not fixed under $\pi$, otherwise $\N$ is redundant as a distinguished hypergraph.
\item Condition~\ref{Ht3} specifies the direct sum decomposition of each hypergraph $X \in \X$ with respect to the distinguished hypergraphs, and the preservation of this direct sum decomposition under the action of $\pi$. In particular, $\pi$ is uniquely determined by $\SC$ and $\pi(\SC)$. Note that, since the hypergraphs in $\SC_X$ are pairwise component disjoint and are all subhypergraphs of the same hypergraph $X$, the set $\SC_X$ is pairwise vertex disjoint.
\item Employing partial transformations $\pi \colon \X \to \sX$ on $\sX$ provides flexibility for constructing hypergraph transformations, since we can choose a subset $\X$ on which $\pi$ has the appropriate action.
\item Hypergraph transformations are not in general closed under composition, and while we could weaken the defining conditions of hypergraph transformations to obtain closure under composition this would reduce the desired specificity of the transformations. Further, while our hypergraph transformations are unary functions they could readily be extended to $n$-ary functions for any $n \in \NN$.
\item The formal definition of a hypergraph transformation $\T := (\X,\pi,\SC)$ on $\sX$ is based on a collection of distinguished hypergraphs $\SC$, which ensures that the partial transformation $\pi \colon \X \to \sX$ acts consistently on $\X$. In practice, however, once we have established that $\T$ is a hypergraph transformation we can regard $\T$ as the partial transformation $\pi \colon \X \to \sX$ without further reference to $\SC$.
\end{enumerate}
\end{remarks}

We now discuss some properties of hypergraph transformations.

\begin{proposition}\label{prop:HTproperties}
Let $\T := (\X,\pi,\SC)$ be a hypergraph transformation on $\sX$.
\begin{enumerate}[label={(\arabic*)}]
\item $\SC_S = \{S\}$, for all $S \in \SC$. \label{prop:SS}
\item $\pi(X) = X$ if and only if $\SC_X = \emptyset$, for $X \in \X$. \label{prop:nonfixed}
\item $\SC = \emptyset$ if and only if $\pi(X) = X$ for all $X \in \X$, that is $\pi$ is an inclusion transformation. \label{prop:nonfixedcorr}
\end{enumerate}
\end{proposition}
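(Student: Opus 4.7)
The plan is to prove the three parts in order, with Part~\ref{prop:SS} doing the computational work that Parts~\ref{prop:nonfixed} and \ref{prop:nonfixedcorr} then leverage. For Part~\ref{prop:SS}, I fix $S \in \SC$ and use Proposition~\ref{prop:CompMax} to identify $\D_S$. The key observation is that $S \ominus S = \N$, so the disjointness condition $V(\pi(S)) \cap V(S \ominus S) = \emptyset$ in the definition of $\SC_S$ is vacuous for $T = S$, placing $S \in \SC_S$ automatically. If $\N \notin \SC$ then $\D_S = \{S\}$ and we are done; if $\N \in \SC$ and $S = \N$ then $\D_\N = \{\N\}$ and the same vacuous check gives $\SC_\N = \{\N\}$. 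The delicate case is $\N \in \SC$ with $S \ne \N$, where $\D_S = \{\N,S\}$ and I must rule out $\N \in \SC_S$. I would argue by contradiction: if $\N \in \SC_S$ then $\SC_S = \{\N,S\}$ and $\widebar{S} = S \ominus (\N \oplus S) = \N$, so Condition~\ref{Ht3}(b) applied to $X = S$ reads $\pi(S) = \pi(\N) \oplus \pi(S)$, and the vertex disjointness guaranteed by Condition~\ref{Ht3}(a) forces $\pi(\N) = \N$, contradicting nonredundancy.

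For Part~\ref{prop:nonfixed}, the direction $\SC_X = \emptyset \Rightarrow \pi(X) = X$ is immediate: with no distinguished summands $\widebar{X} = X \ominus \N = X$ and the empty direct sum in Condition~\ref{Ht3}(b) collapses to $\N$. The reverse direction is what I expect to be the main obstacle. From $\pi(X) = X$ and the two direct sum decompositions $X = \widebar{X} \oplus \bigoplus_{T \in \SC_X} T$ and $\pi(X) = \widebar{X} \oplus \bigoplus_{T \in \SC_X} \pi(T)$, I would apply Proposition~\ref{prop:DirectDiff} to cancel $\widebar{X}$ and obtain $\bigoplus_{T \in \SC_X} T = \bigoplus_{T \in \SC_X} \pi(T)$. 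The heart of the argument is then showing this common hypergraph has no components. For any $T, T' \in \SC_X$ I claim $\C(T) \cap \C(\pi(T')) = \emptyset$: when $T = T'$ this is nonredundancy; when $T \ne T'$, pairwise vertex disjointness of $\SC_X$ gives $V(T) \subseteq V(X \ominus T')$, which combines with $V(\pi(T')) \cap V(X \ominus T') = \emptyset$ from the definition of $\SC_X$. Thus the common component set $\bigcup_T \C(T) = \bigcup_{T'} \C(\pi(T'))$ is disjoint from itself, hence empty, so each $T = \N$. Pairwise component disjointness of $\D_X$ then forces $\SC_X \subseteq \{\N\}$, and $\SC_X = \{\N\}$ would give $X = \pi(X) = X \oplus \pi(\N)$ and hence $\pi(\N) = \N$, contradicting nonredundancy again.

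Part~\ref{prop:nonfixedcorr} will then follow quickly. For $(\Rightarrow)$, if $\SC = \emptyset$ then $\D_X \subseteq \SC$ is empty for every $X \in \X$, so $\SC_X = \emptyset$ and $\pi(X) = X$ by Part~\ref{prop:nonfixed}. For $(\Leftarrow)$, if $\pi(X) = X$ for all $X \in \X$ but $\SC$ were nonempty, I would pick any $S \in \SC \subseteq \X$: Part~\ref{prop:nonfixed} applied to $S$ would give $\SC_S = \emptyset$, directly contradicting $\SC_S = \{S\}$ from Part~\ref{prop:SS}.
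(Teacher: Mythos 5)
Your proposal is correct. Parts~\ref{prop:SS} and \ref{prop:nonfixedcorr}, and the easy direction of Part~\ref{prop:nonfixed}, follow the paper's proof essentially verbatim (including the contradiction $\pi(S) = \pi(\N) \oplus \pi(S)$ forcing $\pi(\N) = \N$ to rule out $\N \in \SC_S$). The one genuine divergence is the implication $\pi(X) = X \Rightarrow \SC_X = \emptyset$ in Part~\ref{prop:nonfixed}. The paper argues the contrapositive with a case split: if some $T \in \SC_X$ has $\pi(T) \ne \N$, it exhibits a component of $\pi(T)$ lying in $\C\big(\pi(X)\big)$ but not in $\C(X)$; if instead $\pi(S) = \N$ for every $S \in \SC_X$, the cardinality condition $\lvert \pi(\SC_X) \rvert = \lvert \SC_X \rvert$ forces $\SC_X$ to be a single (necessarily non-null) summand, whence $\pi(X) = X \ominus \big(\bigoplus_{S \in \SC_X} S\big) \ne X$. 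You instead work directly: cancelling $\widebar{X}$ from the two decompositions of $X = \pi(X)$ (legitimate, since the complement of $\widebar{X}$ in a direct sum decomposition of $X$ is uniquely $\bigoplus\big(\C(X) \setminus \C(\widebar{X})\big)$) yields $\bigoplus_{T \in \SC_X} T = \bigoplus_{T' \in \SC_X} \pi(T')$, and your observation that $\C(T) \cap \C\big(\pi(T')\big) = \emptyset$ for all $T$, $T' \in \SC_X$ --- nonredundancy on the diagonal, and vertex disjointness of $\pi(T')$ from $X \ominus T' \supseteq T$ off it --- forces this common hypergraph to be null; the residual possibility $\SC_X = \{\N\}$ is eliminated by nonredundancy exactly as in Part~\ref{prop:SS}. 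Both arguments are sound: yours avoids the case analysis at the cost of justifying the cancellation step, while the paper's contrapositive needs no uniqueness of the complement but must treat the all-$\pi(T) = \N$ case separately.
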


\begin{proof}
\ref{prop:SS} Let $S \in \SC$. If $\N \notin \SC$ then $\D_S = \{S\}$, by Part~\ref{prop:CompMax2} of Proposition~\ref{prop:CompMax}, so $\SC_S = \{S\}$. Suppose now that $\N \in \SC$. Then $\D_S = \{\N,S\}$, by Part~\ref{prop:CompMax3} of Proposition~\ref{prop:CompMax}, so we must have $\SC_S = \{S\}$: we cannot have $\SC_S = \{\N,S\}$ when $S \ne \N$ since the direct sum decomposition gives $\pi(S) = \big(S \ominus (\bigoplus_{T \in \SC_S} T)\big) \oplus \big(\bigoplus_{T \in \SC_S} \pi(T)\big) = \big(S \ominus (\N \oplus S)\big) \oplus \big(\pi(\N) \oplus \pi(S)\big) = \pi(\N) \oplus \pi(S)$, hence $\pi(\N) = \N$, and therefore nonredundancy does not hold.

\ref{prop:nonfixed} For the forward direction we use a contrapositive argument, so suppose that $\SC_X \ne \emptyset$. First, suppose that there exists $T \in \SC_X$ such that $\pi(T) \ne \N$. Since $\C(T) \cap \C \big(\pi(T)\big) = \emptyset$ and $\C(T) \subseteq \C(X)$ it follows that $\C\big(\pi(T)\big) \cap \C(X) = \C\big(\pi(T)\big) \cap \C(X \ominus T)$, and since $V\big(\pi(T)\big) \cap V(X \ominus T) = \emptyset$ it follows that $\C\big(\pi(T)\big) \cap \C(X \ominus T) = \emptyset$, so $\C\big(\pi(T)\big) \cap \C(X) = \emptyset$. Further, since $\pi(X) = \widebar{X} \oplus \big(\bigoplus_{S \in \SC_X} \pi(S)\big)$ by Condition~\ref{Ht3} of Definition~\ref{def:Ht}, it follows that $\C\big(\pi(T)\big) \subseteq \C\big(\pi(X)\big)$. We conclude that $\C\big(\pi(X)\big) \ne \C(X)$, and hence $\pi(X) \ne X$. Second, suppose that $\pi(S) = \N$ for all $S \in \SC_X$, which implies $\lvert \SC_X \rvert = \lvert \pi(\SC_X) \rvert = \lvert \{\N\} \rvert = 1$. Then Condition~\ref{Ht3} of Definition~\ref{def:Ht} gives $\pi(X) = \widebar{X} = X \ominus \big(\bigoplus_{S \in \SC_X} S\big) \ne X$. For the backward direction, if $\SC_X = \emptyset$ then Condition~\ref{Ht3} of Definition~\ref{def:Ht} implies that $\pi(X) = \widebar{X} = X$.

\ref{prop:nonfixedcorr} First note that if $S \in \SC$ then $\SC_S = \{S\}$ by Part~\ref{prop:SS} of this proposition, so $\SC = \emptyset$ if and only if $\SC_X = \emptyset$ for all $X \in \X$. The result then follows from Part~\ref{prop:nonfixed} of this proposition.
\end{proof}

We define a notion of disjointness for hypergraph transformations, a property that ensures independence of action of the hypergraph transformations (see Proposition~\ref{prop:DisTransComm}, Corollary~\ref{cor:DisTransComm}, and Proposition~\ref{prop:DisTrans}).

\begin{definition}[\textbf{Disjoint hypergraph transformations}]\label{def:DisTrans}
Two hypergraph transformations $\T := (\X,\pi,\SC)$ and $\T' := (\X',\pi',\SC')$ on $\sX$ are \emph{disjoint} if for all $X \in \SC \cup \pi(\SC)$ and for all $Y \in \SC' \cup \pi'(\SC')$ the hypergraphs $X$ and $Y$ are vertex disjoint.
\end{definition}

\begin{notation}
For two partial transformations $\pi_1 \colon \mathcal{X}_1 \to \sX$ and $\pi_2 \colon \mathcal{X}_2 \to \sX$ on $\sX$ their composition $\pi_2 \circ \pi_1 \colon \Dom (\pi_2 \circ \pi_1) \to \sX$ is the partial transformation with domain $\Dom (\pi_2 \circ \pi_1) := \pi_1^{-1} \big(\Image (\pi_1) \cap \Dom (\pi_2) \big)$. Note that $\Dom (\pi_2 \circ \pi_1)$ is the largest possible domain for $\pi_2 \circ \pi_1$, and if $\Image (\pi_1) \cap \Dom (\pi_2) = \emptyset$ then $\Dom (\pi_2 \circ \pi_1) = \emptyset$ and $\pi_2 \circ \pi_1$ is the empty transformation.

More generally, if $(\pi_i \colon \mathcal{X}_i \to \sX)_{i=1}^n$ is a finite sequence of partial transformations on $\sX$, for some $n \in \NN$, then their composition $\bigcirc_{i=1}^n \pi_{n+1-i} := \pi_n \circ \cdots \circ \pi_2 \circ \pi_1$ is the partial transformation $\bigcirc_{i=1}^n \pi_{n+1-i} \colon \Dom (\bigcirc_{i=1}^n \pi_{n+1-i}) \to \sX$, noting $\Dom (\bigcirc_{i=1}^n \pi_{n+1-i})$ is the largest possible domain for $\bigcirc_{i=1}^n \pi_{n+1-i}$.

Denote by $S_n$ the set of all permutations of $[n]$. The \emph{coincidence set} $\Coin\big((\bigcirc_{i \in \sigma} \pi_{n+1-i})_{\sigma \in S_n}\big)$ of the sequence of all compositions $(\bigcirc_{i \in \sigma} \pi_{n+1-i})_{\sigma \in S_n}$ of $(\pi_i \colon \mathcal{X}_i \to \sX)_{i=1}^n$ is the maximum subset of the common domain $\bigcap_{\sigma \in S_n} \Dom (\bigcirc_{i \in \sigma} \pi_{n+1-i})$ such that for each hypergraph in $\Coin\big((\bigcirc_{i \in \sigma} \pi_{n+1-i})_{\sigma \in S_n}\big)$ the compositions $(\bigcirc_{i \in \sigma} \pi_{n+1-i})_{\sigma \in S_n}$ of $(\pi_i \colon \mathcal{X}_i \to \sX)_{i=1}^n$ have the same image.
\end{notation}

\begin{lemma}\label{lemma:DistHyper}
Let $\T := (\X,\pi,\SC)$ be a hypergraph transformation on $\sX$, and let $X \in \X$. Suppose $X'$, $X'' \in \sX$ are such that $\C(X') \subseteq \C(X)$, $V(X'') \cap V(X \ominus X') = \emptyset$, and $Y := (X \ominus X') \oplus X'' \in \X$. Suppose further that, for all $S \in \SC$, $S$ is component disjoint with both $X'$ and $X''$, and $\pi(S)$ is vertex disjoint with both $X'$ and $X''$. Then $\D_Y = \D_X$ and $\SC_Y = \SC_X$.
\end{lemma}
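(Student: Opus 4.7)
The plan is to establish the two equalities separately, and for each, reduce them to a simple equivalence check by invoking a result already proven for $\D_X$ and then unpacking the definition of $\SC_X$.

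For $\D_Y = \D_X$, I would apply Proposition~\ref{prop:CompMax}\ref{prop:CompMax5}, which reduces the equality to verifying that for every $S \in \SC \setminus \{\N\}$ we have $\C(S) \subseteq \C(X)$ if and only if $\C(S) \subseteq \C(Y)$. First I compute
$\C(Y) = \C\big((X \ominus X') \oplus X''\big) = \big(\C(X) \setminus \C(X')\big) \cup \C(X'')$,
using the definitions of direct sum and direct difference (the union is disjoint because $X \ominus X'$ and $X''$ are vertex disjoint). Given the hypothesis that $S$ is component disjoint with both $X'$ and $X''$, so that $\C(S) \cap \C(X') = \C(S) \cap \C(X'') = \emptyset$, each direction of the equivalence is a one-line set-theoretic check.

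For $\SC_Y = \SC_X$, I would use the equality $\D_Y = \D_X$ already established together with the definition $\SC_X = \{\,S \in \D_X \mid V(\pi(S)) \cap V(X \ominus S) = \emptyset\,\}$. The task reduces to showing, for each $S \in \D_X$, the equivalence $V(\pi(S)) \cap V(X \ominus S) = \emptyset \iff V(\pi(S)) \cap V(Y \ominus S) = \emptyset$. Here I would split into cases. If $S = \N$ (when $\N \in \SC$) then $X \ominus S = X$ and $Y \ominus S = Y$, and one uses $V(Y) = \big(V(X) \setminus V(X')\big) \cup V(X'')$ together with $V(\pi(\N)) \cap V(X') = V(\pi(\N)) \cap V(X'') = \emptyset$ to conclude. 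If $S \neq \N$, first observe that $\C(S) \subseteq \C(X \ominus X')$ (since $\C(S) \subseteq \C(X)$ and $\C(S) \cap \C(X') = \emptyset$), hence $V(S) \subseteq V(X \ominus X')$, so $V(S) \cap V(X'') = \emptyset$; this yields $V(Y \ominus S) = \big(V(X) \setminus V(X') \setminus V(S)\big) \cup V(X'')$. The vertex disjointness of $\pi(S)$ with both $X'$ and $X''$ then gives both implications directly.

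There is no real obstacle here — the argument is entirely bookkeeping of vertex and component sets. The only mildly subtle point is recognising that the hypotheses are exactly those needed: component disjointness of each $S \in \SC$ with $X'$ and $X''$ controls the containment relations underpinning $\D_X$, while vertex disjointness of each $\pi(S)$ with $X'$ and $X''$ controls the vertex-disjointness criterion distinguishing $\SC_X$ from $\D_X$. Once this correspondence is clear, the proof is a short and direct verification.
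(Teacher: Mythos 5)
Your proposal is correct and follows essentially the same route as the paper: both establish $\D_Y = \D_X$ via Part~\ref{prop:CompMax5} of Proposition~\ref{prop:CompMax} using the component disjointness of each $S \in \SC$ with $X'$ and $X''$, and both then transfer the vertex-disjointness criterion $V\big(\pi(S)\big) \cap V(X \ominus S) = \emptyset$ to $Y$ using the vertex disjointness of $\pi(S)$ with $X'$ and $X''$. The only cosmetic difference is that the paper derives the identity $Y \ominus S = \big((X \ominus S) \ominus X'\big) \oplus X''$ from Proposition~\ref{prop:DirectDiff} and argues by two inclusions, whereas you expand the vertex sets directly; both computations are equivalent.
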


\begin{proof}
We first show that $\D_Y = \D_X$, which will follow from Part~\ref{prop:CompMax5} of Proposition~\ref{prop:CompMax} after establishing that $\C(S) \subseteq \C(X)$ if and only if $\C(S) \subseteq \C(Y)$ for all $S \in \SC \setminus \{\N\}$. Fix $S \in \SC \setminus \{\N\}$. If $\C(S) \subseteq \C(X)$ then, since $S \in \SC$ implies $\C(S) \cap \C(X') = \emptyset$, and since $\C(X') \subseteq \C(X)$, we have $\C(S) \subseteq \C(X \ominus X')$ and hence $\C(S) \subseteq \C(Y)$. Conversely, if $\C(S) \subseteq \C(Y)$ then, since $S \in \SC$ implies $\C(S) \cap \C(X'') = \emptyset$, we have $\C(S) \subseteq \C(X \ominus X') \subseteq \C(X)$. Since $S \in \SC \setminus \{\N\}$ is arbitrary, we conclude that $\D_Y = \D_X$.

To show that $\SC_Y = \SC_X$ we begin by establishing that if $S \in \SC$ satisfies $\C(S) \subseteq \C(X)$ and $\C(S) \subseteq \C(Y)$ then $V\big(\pi(S)\big) \cap V(X \ominus S) = V\big(\pi(S)\big) \cap V(Y \ominus S)$. Note that $Y \ominus S = \big((X \ominus X') \oplus X''\big) \ominus S = \big((X \ominus X') \ominus S \big) \oplus X'' = \big((X \ominus S) \ominus X' \big) \oplus X''$: the second equality follows from Part~\ref{prop:DirectDiff3} of Proposition~\ref{prop:DirectDiff} since $X''$ is disjoint with $X \ominus X'$, and $\C(S) \subseteq \C(X \ominus X')$ which also implies $X''$ is disjoint with $S$; and the third equality follows from Part~\ref{prop:DirectDiff2} of Proposition~\ref{prop:DirectDiff} since $X'$ is disjoint with $S$, $\C(S) \subseteq \C(X)$, and $\C(X') \subseteq \C(X)$. Now, since $S \in \SC$ implies $V\big(\pi(S)\big) \cap V(X') = \emptyset$ we have $V\big(\pi(S)\big) \cap V(X \ominus S) = V\big(\pi(S)\big) \cap V\big((X \ominus S) \ominus X'\big) \subseteq V\big(\pi(S)\big) \cap V(Y \ominus S)$, and since $S \in \SC$ implies $V\big(\pi(S)\big) \cap V(X'') = \emptyset$ we have $V\big(\pi(S)\big) \cap V(Y \ominus S) = V\big(\pi(S)\big) \cap V\big((X \ominus S) \ominus X'\big) \subseteq V\big(\pi(S)\big) \cap V(X \ominus S)$. Therefore $V\big(\pi(S)\big) \cap V(X \ominus S) = V\big(\pi(S)\big) \cap V(Y \ominus S)$.

We now show that $\SC_Y = \SC_X$. First, we have $\D_Y = \D_X$. Second, $S \in \D_Y = \D_X$ implies $\C(S) \subseteq \C(Y)$ and $\C(S) \subseteq \C(X)$, so $V\big(\pi(S)\big) \cap V(Y \ominus S) = V\big(\pi(S)\big) \cap V(X \ominus S)$. We conclude that $\SC_Y = \SC_X$.
\end{proof}

\begin{proposition}\label{prop:DisTransComm}
If two hypergraph transformations $\T := (\X,\pi,\SC)$ and $\T' := (\X',\pi',\SC')$ on $\sX$ are disjoint then $\Coin\big((\pi' \circ \pi, \pi \circ \pi' )\big) = \Dom (\pi' \circ \pi) \cap \Dom (\pi \circ \pi')$.
\end{proposition}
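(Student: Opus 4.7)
The inclusion $\Coin\big((\pi' \circ \pi, \pi \circ \pi')\big) \subseteq \Dom(\pi' \circ \pi) \cap \Dom(\pi \circ \pi')$ is immediate from the definition of the coincidence set, so I would fix $X \in \Dom(\pi' \circ \pi) \cap \Dom(\pi \circ \pi')$ and show that $\pi'(\pi(X)) = \pi(\pi'(X))$. The guiding idea is that disjointness of $\T$ and $\T'$ forces the components of $X$ modified by $\pi$ to be entirely vertex-separated from those modified by $\pi'$, so the two partial transformations act on independent pieces of $X$ and their order of application is immaterial.

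Fix such an $X$ and abbreviate $A := \bigoplus_{S' \in \SC'_X} S'$, $B := \bigoplus_{S' \in \SC'_X} \pi'(S')$, $C := \bigoplus_{S \in \SC_X} S$, $D := \bigoplus_{S \in \SC_X} \pi(S)$; these direct sums are well defined by Condition~\ref{Ht3}(a) of Definition~\ref{def:Ht}, and Condition~\ref{Ht3}(b) gives $\pi(X) = (X \ominus C) \oplus D$ and $\pi'(X) = (X \ominus A) \oplus B$. The first substantive step is to apply Lemma~\ref{lemma:DistHyper} to $\T$ with $X$, the choice $X' := A$, $X'' := B$, and $Y := \pi'(X) \in \X$ (the membership holding because $X \in \Dom(\pi \circ \pi')$). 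The inclusion $\C(A) \subseteq \C(X)$ follows from $\SC'_X \subseteq \D'_X$ together with Property~\ref{def:CompMax3} of Definition~\ref{def:CompMax}, and $V(B) \cap V(X \ominus A) = \emptyset$ reduces, via the observation that $\C(X \ominus A) \subseteq \C(X \ominus S')$ for each $S' \in \SC'_X$, to the defining property of $\SC'_X$; the remaining hypotheses, that each $S \in \SC$ is component disjoint with both $A$ and $B$ and each $\pi(S)$ is vertex disjoint with both, are precisely what Definition~\ref{def:DisTrans} provides. The lemma yields $\SC_{\pi'(X)} = \SC_X$, and the symmetric application with the roles of $\T$ and $\T'$ interchanged yields $\SC'_{\pi(X)} = \SC'_X$.

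Invoking Condition~\ref{Ht3}(b) a second time with these equalities gives $\pi(\pi'(X)) = \big((X \ominus A) \oplus B\big) \ominus C \oplus D$ and $\pi'(\pi(X)) = \big((X \ominus C) \oplus D\big) \ominus A \oplus B$. Both expressions reduce to the common form $\big((X \ominus A) \ominus C\big) \oplus B \oplus D$ using Proposition~\ref{prop:DirectDiff}: part~(3) moves the outer direct sum past the direct difference (the necessary disjointness of $B$ from $X \ominus A$ and from $C$, and the inclusion $\C(C) \subseteq \C(X \ominus A)$, all follow from the disjointness of the transformations together with $\C(A), \C(C) \subseteq \C(X)$), and part~(2) interchanges the two nested direct differences in $A$ and $C$. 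Order-independence of $\oplus$ then identifies the two normal forms.

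The main obstacle is the bookkeeping required to verify the hypotheses of Lemma~\ref{lemma:DistHyper}, especially the clauses concerning the image hypergraphs $\pi(S)$ being vertex disjoint from the compound objects $A$ and $B$; this is exactly where Definition~\ref{def:DisTrans} is used in its full strength. Once $\SC_{\pi'(X)} = \SC_X$ and $\SC'_{\pi(X)} = \SC'_X$ are in hand, the remainder of the proof is a purely formal manipulation of direct sums and differences.
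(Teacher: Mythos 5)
Your proposal is correct and follows essentially the same route as the paper's proof: both reduce the problem to establishing $\SC_{\pi'(X)} = \SC_X$ and $\SC'_{\pi(X)} = \SC'_X$ via Lemma~\ref{lemma:DistHyper} (with the same choices of $X'$ and $X''$), and then identify the two compositions by rewriting each as a common normal form using Parts~\ref{prop:DirectDiff2} and \ref{prop:DirectDiff3} of Proposition~\ref{prop:DirectDiff} together with order-independence of the direct sum. The only difference is notational (your abbreviations $A$, $B$, $C$, $D$ versus the paper's fully written-out direct sums).
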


\begin{proof}
We have $\Coin\big((\pi' \circ \pi, \pi \circ \pi' )\big) \subseteq \Dom (\pi' \circ \pi) \cap \Dom (\pi \circ \pi')$ by the definition of the coincidence set, so to establish the reverse inclusion let $X \in \Dom (\pi' \circ \pi) \cap \Dom (\pi \circ \pi')$ and we show $\pi\big(\pi'(X)\big) = \pi'\big(\pi(X)\big)$.

Denoting $Y := \pi'(X) = \big(X \ominus (\bigoplus_{S' \in \SC'_X} S')\big) \oplus \big(\bigoplus_{S' \in \SC'_X} \pi'(S')\big)$, and noting that $Y \in \Dom(\pi)$ and Lemma~\ref{lemma:DistHyper} implies $\SC_Y = \SC_X$, we have
\begin{align*}
\pi\big(\pi'(X)\big) = \pi(Y) &= \big(Y \ominus (\textstyle\bigoplus_{S \in \SC_Y} S)\big) \oplus \big(\textstyle\bigoplus_{S \in \SC_Y} \pi(S)\big)\\
&= \Big[\Big(\big(X \ominus (\textstyle\bigoplus_{S' \in \SC'_X} S')\big) \oplus \big(\textstyle\bigoplus_{S' \in \SC'_X} \pi'(S')\big)\Big) \ominus (\textstyle\bigoplus_{S \in \SC_Y} S) \Big] \oplus \big(\textstyle\bigoplus_{S \in \SC_Y} \pi(S)\big)\\
&= \Big[\Big(\big(X \ominus (\textstyle\bigoplus_{S' \in \SC'_X} S')\big) \oplus \big(\textstyle\bigoplus_{S' \in \SC'_X} \pi'(S')\big)\Big) \ominus (\textstyle\bigoplus_{S \in \SC_X} S) \Big] \oplus \big(\textstyle\bigoplus_{S \in \SC_X} \pi(S)\big).
\end{align*}
Now, denoting $Z := \pi(X) = \big(X \ominus (\bigoplus_{S \in \SC_X} S)\big) \oplus \big(\bigoplus_{S \in \SC_X} \pi(S)\big)$, and noting that $Z \in \Dom(\pi')$ and Lemma~\ref{lemma:DistHyper} implies $\SC'_Z = \SC'_X$, we have
\begin{align*}
\pi'\big(\pi(X)\big) = \pi'(Z) &= \big(Z \ominus (\textstyle\bigoplus_{S' \in \SC'_Z} S')\big) \oplus \big(\textstyle\bigoplus_{S' \in \SC'_Z} \pi'(S')\big)\\
&= \Big[\Big(\big(X \ominus (\textstyle\bigoplus_{S \in \SC_X} S)\big) \oplus \big(\textstyle\bigoplus_{S \in \SC_X} \pi(S)\big)\Big) \ominus (\textstyle\bigoplus_{S' \in \SC'_Z} S') \Big] \oplus \big(\textstyle\bigoplus_{S' \in \SC'_Z} \pi'(S')\big)\\
&= \Big[\Big(\big(X \ominus (\textstyle\bigoplus_{S \in \SC_X} S)\big) \oplus \big(\textstyle\bigoplus_{S \in \SC_X} \pi(S)\big)\Big) \ominus (\textstyle\bigoplus_{S' \in \SC'_X} S') \Big] \oplus \big(\textstyle\bigoplus_{S' \in \SC'_X} \pi'(S')\big)\\
&= \Big[\Big(\big(X \ominus (\textstyle\bigoplus_{S \in \SC_X} S)\big) \ominus (\textstyle\bigoplus_{S' \in \SC'_X} S')\Big) \oplus \big(\textstyle\bigoplus_{S \in \SC_X} \pi(S)\big) \Big] \oplus \big(\textstyle\bigoplus_{S' \in \SC'_X} \pi'(S')\big)\\
&= \Big[\Big(\big(X \ominus (\textstyle\bigoplus_{S' \in \SC'_X} S')\big) \ominus (\textstyle\bigoplus_{S \in \SC_X} S)\Big) \oplus \big(\textstyle\bigoplus_{S \in \SC_X} \pi(S)\big) \Big] \oplus \big(\textstyle\bigoplus_{S' \in \SC'_X} \pi'(S')\big)\\
&= \Big[\Big(\big(X \ominus (\textstyle\bigoplus_{S' \in \SC'_X} S')\big) \ominus (\textstyle\bigoplus_{S \in \SC_X} S)\Big) \oplus \big(\textstyle\bigoplus_{S' \in \SC'_X} \pi'(S')\big) \Big] \oplus \big(\textstyle\bigoplus_{S \in \SC_X} \pi(S)\big)\\
&= \Big[\Big(\big(X \ominus (\textstyle\bigoplus_{S' \in \SC'_X} S')\big) \oplus \big(\textstyle\bigoplus_{S' \in \SC'_X} \pi'(S')\big) \Big) \ominus (\textstyle\bigoplus_{S \in \SC_X} S) \Big] \oplus \big(\textstyle\bigoplus_{S \in \SC_X} \pi(S)\big)\\
&= \pi\big(\pi'(X)\big),
\end{align*}
where the fifth and eighth equalities hold by the disjointness of $\T$ and $\T'$ and by Part~\ref{prop:DirectDiff3} of Proposition~\ref{prop:DirectDiff}, the sixth equality holds by the disjointness of $\T$ and $\T'$ and by Part~\ref{prop:DirectDiff2} of Proposition~\ref{prop:DirectDiff}, and the seventh equality holds since a direct sum of hypergraphs is independent of the order of the hypergraph summands.
\end{proof}

\begin{corollary}\label{cor:DisTransComm}
If the finite sequence of hypergraph transformations $\big(\T_i := (\X_i,\pi_i,\SC^i)\big)_{i=1}^n$ on $\sX$, for some $n \in \NN$ with $n \ge 2$, is pairwise disjoint then $\Coin\big((\bigcirc_{i \in \sigma} \pi_{n+1-i})_{\sigma \in S_n}\big) = \bigcap_{\sigma \in S_n} \Dom (\bigcirc_{i \in \sigma} \pi_{n+1-i})$.
\end{corollary}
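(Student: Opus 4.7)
The plan is to reduce the $n$-fold statement to iterated application of Proposition~\ref{prop:DisTransComm}. The inclusion $\Coin\big((\bigcirc_{i \in \sigma} \pi_{n+1-i})_{\sigma \in S_n}\big) \subseteq \bigcap_{\sigma \in S_n} \Dom (\bigcirc_{i \in \sigma} \pi_{n+1-i})$ is immediate from the definition of the coincidence set, so only the reverse inclusion requires proof.

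Fix an arbitrary hypergraph $X$ in the common domain; we need to show that all $n!$ compositions indexed by $\sigma \in S_n$ send $X$ to the same hypergraph. The central observation is that the symmetric group $S_n$ is generated by the adjacent transpositions swapping positions $k$ and $k+1$ for $1 \le k \le n-1$, so any two permutations in $S_n$ are connected by a finite chain of such single transpositions. It therefore suffices to show that if $\sigma, \sigma' \in S_n$ differ by one adjacent transposition at positions $k$ and $k+1$, then the corresponding compositions agree at $X$.

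For such $\sigma, \sigma'$, the two compositions share a common prefix of $k-1$ transformations (applied first) and a common suffix of $n-k-1$ transformations (applied last); only the $k$-th and $(k+1)$-th applications are interchanged. Let $\pi_a$ and $\pi_b$ denote the two transformations being swapped, and let $Y$ be the common intermediate hypergraph obtained by applying the common prefix to $X$. Since both $\sigma$ and $\sigma'$ lie in $S_n$ and $X$ is in the common domain, $Y$ lies in $\Dom(\pi_b \circ \pi_a) \cap \Dom(\pi_a \circ \pi_b)$. By the pairwise disjointness hypothesis the transformations $\T_a$ and $\T_b$ are disjoint, so Proposition~\ref{prop:DisTransComm} gives $\pi_b\big(\pi_a(Y)\big) = \pi_a\big(\pi_b(Y)\big)$. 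Thus the intermediate states after $k+1$ applications coincide, and the common suffix then produces equal final outputs.

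Chaining adjacent transpositions interpolates between any two permutations in $S_n$ while preserving the image of $X$ at every step, so all $n!$ compositions agree at $X$, establishing $X \in \Coin\big((\bigcirc_{i \in \sigma} \pi_{n+1-i})_{\sigma \in S_n}\big)$. The main bookkeeping concern, and the principal subtlety to watch, is ensuring that the composition associated to each intermediate permutation along the chain is defined at $X$; this is automatic because every intermediate permutation lies in $S_n$, so its composition is defined at $X$ by the hypothesis that $X$ lies in the common domain over all of $S_n$.
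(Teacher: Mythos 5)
Your proposal is correct and follows essentially the same route as the paper: reduce to adjacent transpositions generating $S_n$ and apply Proposition~\ref{prop:DisTransComm} at each swap. You spell out more carefully than the paper does that the proposition must be applied to the common intermediate hypergraph $Y$ obtained from the shared prefix (and that $Y$ lies in the required two-fold domains), which is a welcome clarification rather than a deviation.
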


\begin{proof}
Note that $\bigcap_{\sigma \in S_n} \Dom (\bigcirc_{i \in \sigma} \pi_{n+1-i}) \subseteq \Dom (\pi_j \circ \pi_k) \cap \Dom (\pi_k \circ \pi_j) = \Coin\big((\pi_j \circ \pi_k, \pi_k \circ \pi_j )\big)$ for all $j$, $k \in [n]$ with $j \ne k$, where the equality follows from Proposition~\ref{prop:DisTransComm}. Since any two permutations in $S_n$ can be transformed into each other by permuting adjacent elements, it therefore follows that $\bigcap_{\sigma \in S_n} \Dom (\bigcirc_{i \in \sigma} \pi_{n+1-i}) \subseteq \Coin\big((\bigcirc_{i \in \sigma} \pi_{n+1-i})_{\sigma \in S_n}\big)$.
\end{proof}

\begin{proposition}\label{prop:DisTrans}
Suppose that $\big(\T_i := (\X_i,\pi_i,\SC^i)\big)_{i=1}^m$ is a finite sequence of pairwise disjoint hypergraph transformations on $\sX$, for some $m \in \NN$, and let $X \in \Coin\big((\bigcirc_{i \in \sigma} \pi_{m+1-i})_{\sigma \in S_m}\big)$. Then we have the direct sum decompositions
\begin{equation}\label{eq:DisTrans1}
X = \widebar{X} \oplus (\textstyle\bigoplus_{i \in [m]} \textstyle\bigoplus_{S \in \SC_X^i} S)
\end{equation}
and
\begin{equation}\label{eq:DisTrans2}
\pi_m \circ \cdots \circ \pi_1 (X) = \widebar{X} \oplus \big(\textstyle\bigoplus_{i \in [m]} \textstyle\bigoplus_{S \in \SC_X^i} \pi_i(S)\big),
\end{equation}
where $\widebar{X}$ is the strong subhypergraph of $X$ determined by $\widebar{X} := X \ominus (\bigoplus_{i \in [m]} \bigoplus_{S \in \SC_X^i} S)$, and \eqref{eq:DisTrans2} is independent of the order of the $\pi_i$.
\end{proposition}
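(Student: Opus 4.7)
The plan is to begin by verifying that the double direct sum $\bigoplus_{i \in [m]} \bigoplus_{S \in \SC_X^i} S$ is a well-defined sub-direct-sum of $X$. Within a fixed index $i$, the set $\SC_X^i$ is pairwise vertex disjoint (noted after Definition~\ref{def:Ht}) and each $S \in \SC_X^i$ satisfies $\C(S) \subseteq \C(X)$ by Property~\ref{def:CompMax3} of Definition~\ref{def:CompMax}. Across distinct $i \ne j$, the pairwise disjointness of $\T_i$ and $\T_j$ (Definition~\ref{def:DisTrans}) forces every $S \in \SC^i$ to be vertex disjoint from every $S' \in \SC^j$. Hence the components of the double direct sum are pairwise distinct components of $X$, and equation~\eqref{eq:DisTrans1} follows immediately from Part~\ref{prop:DirectDiff1} of Proposition~\ref{prop:DirectDiff}.

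For equation~\eqref{eq:DisTrans2}, I would proceed by induction on $m$, writing $X_0 := X$ and $X_k := \pi_k(X_{k-1})$, and carrying two inductive invariants:
\begin{enumerate}[label=(\roman*)]
\item $X_k = \widebar{X}_k \oplus \big(\bigoplus_{i=1}^{k} \bigoplus_{S \in \SC_X^i} \pi_i(S)\big)$, where $\widebar{X}_k := X \ominus \big(\bigoplus_{i=1}^{k} \bigoplus_{S \in \SC_X^i} S\big)$;
\item $\SC_{X_k}^j = \SC_X^j$ for all $j > k$.
\end{enumerate}
The base case $k=0$ is vacuous. Invariant~(ii) is the key technical device for the step: it ensures that when Condition~\ref{Ht3} of Definition~\ref{def:Ht} is applied to $\pi_{k+1}(X_k)$, the extracted distinguished components are exactly $\SC_X^{k+1}$. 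After this replacement, Parts~\ref{prop:DirectDiff2} and \ref{prop:DirectDiff3} of Proposition~\ref{prop:DirectDiff}---applicable thanks to the vertex-disjointness of the $\SC^{k+1}$-summands from all earlier $S$ and $\pi_i(S)$ summands, which is forced by pairwise disjointness of the $\T_i$---rearrange the expression into the form required by invariant~(i) at step $k+1$. To propagate invariant~(ii), for each $j > k+1$ I apply Lemma~\ref{lemma:DistHyper} to $\T_j$ on $X_k$ with $X' := \bigoplus_{S \in \SC_X^{k+1}} S$ and $X'' := \bigoplus_{S \in \SC_X^{k+1}} \pi_{k+1}(S)$; disjointness of $\T_j$ from $\T_{k+1}$ supplies precisely the required vertex-disjointness of every $T \in \SC^j$ and every $\pi_j(T)$ from both $X'$ and $X''$, while $\C(X') \subseteq \C(X_k)$ is inherited from the current value of (ii).

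Order-independence of equation~\eqref{eq:DisTrans2} is then immediate from Corollary~\ref{cor:DisTransComm}: since $X$ lies in the coincidence set, every composition $\bigcirc_{i \in \sigma} \pi_{m+1-i}$ returns the same image on $X$, and the right-hand side of~\eqref{eq:DisTrans2} is manifestly symmetric in the indices $i \in [m]$. The principal obstacle is not any single deep identity but the bookkeeping inside the inductive step, in particular the maintenance of invariant~(ii) as the underlying hypergraph mutates from $X_k$ to $X_{k+1}$; once the hypotheses of Lemma~\ref{lemma:DistHyper} are verified at each stage, the remaining manipulations reduce to routine applications of Proposition~\ref{prop:DirectDiff} together with the commutativity and associativity of $\oplus$.
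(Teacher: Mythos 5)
Your proposal is correct and follows essentially the same route as the paper: induction on the number of transformations, with Lemma~\ref{lemma:DistHyper} supplying the invariance of the sets $\SC^j_{(\cdot)}$ under the intermediate images, Proposition~\ref{prop:DirectDiff} handling the rearrangement of $\oplus$ and $\ominus$, and membership in the coincidence set giving order independence. The only difference is organisational --- you run a forward induction carrying the invariant $\SC^j_{X_k} = \SC^j_X$ for all $j > k$, whereas the paper inducts on $m$ and applies Lemma~\ref{lemma:DistHyper} once per step to the composite $Y = \pi_n \circ \cdots \circ \pi_1(X)$ --- but the substance is identical.
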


\begin{proof}
We establish the result by induction on $m$. The result holds for $m = 1$ by Condition~\ref{Ht3} of Definition~\ref{def:Ht}. Suppose now that the result holds for $m = n$, where $n \in \NN$, and we show the result for $m = n+1$. Define $\widebar{X} := X \ominus (\bigoplus_{i \in [n+1]} \bigoplus_{S \in \SC_X^i} S)$, from which we have $X = \widebar{X} \oplus (\bigoplus_{i \in [n+1]} \bigoplus_{S \in \SC_X^i} S)$, so Equation~\eqref{eq:DisTrans1} holds for $m=n+1$. Now, $X = \big(\widebar{X} \oplus (\bigoplus_{S \in \SC_X^{n+1}} S) \big) \oplus (\bigoplus_{i \in [n]} \bigoplus_{S \in \SC_X^i} S)$, since a direct sum of hypergraphs is independent of the order of the hypergraph summands, so $\widebar{X} \oplus (\bigoplus_{S \in \SC_X^{n+1}} S) = X \ominus (\bigoplus_{i \in [n]} \bigoplus_{S \in \SC_X^i} S)$. Since Equations~\eqref{eq:DisTrans1} and \eqref{eq:DisTrans2} hold for $m=n$, by assumption, we have $X = \big(\widebar{X} \oplus (\bigoplus_{S \in \SC_X^{n+1}} S)\big) \oplus (\bigoplus_{i \in [n]} \bigoplus_{S \in \SC_X^i} S)$ and $\pi_n \circ \cdots \circ \pi_1 (X) = \big(\widebar{X} \oplus (\bigoplus_{S \in \SC_X^{n+1}} S)\big) \oplus \big(\bigoplus_{i \in [n]} \bigoplus_{S \in \SC_X^i} \pi_i(S)\big) =: Y$. We need to determine $\pi_{n+1} (Y)$.

We can write $Y = (X \ominus X') \oplus X''$ for $X' := \bigoplus_{i \in [n]} \bigoplus_{S \in \SC_X^i} S$ and $X'' := \bigoplus_{i \in [n]} \bigoplus_{S \in \SC_X^i} \pi_i(S)$. Note that $\C(X') \subseteq \C(X)$. Further, since $\T_i$ for $i \in [n]$ are hypergraph transformations, and since $V(X \ominus X') \subseteq V(X \ominus S)$ for all $S \in \SC_X^i$ with $i \in [n]$, it follows that $V(X'') \cap V(X \ominus X') = \emptyset$. Since the hypergraph transformations $(\T_i)_{i=1}^{n+1}$ are pairwise disjoint it follows that, for all $S \in \SC^{n+1}$, $S$ is component disjoint with both $X'$ and $X''$, and $\pi_{n+1}(S)$ is vertex disjoint with both $X'$ and $X''$. Therefore $\SC^{n+1}_Y = \SC^{n+1}_X$ by Lemma~\ref{lemma:DistHyper}. Further, letting $\widebar{Y} := Y \ominus (\bigoplus_{S \in \SC_Y^{n+1}} S)$, we have $Y = \widebar{Y} \oplus (\bigoplus_{S \in \SC_Y^{n+1}} S)$ and $\pi_{n+1} (Y) = \widebar{Y} \oplus \big(\bigoplus_{S \in \SC_Y^{n+1}} \pi_{n+1} (S)\big)$. So,
\begin{align*}
\pi_{n+1} \circ \cdots \circ \pi_1 (X)
&= \pi_{n+1} (Y)
= \widebar{Y} \oplus \big(\textstyle\bigoplus_{S \in \SC_Y^{n+1}} \pi_{n+1} (S)\big)\\
&= \big(Y \ominus (\textstyle\bigoplus_{S \in \SC_Y^{n+1}} S)\big) \oplus \big(\textstyle\bigoplus_{S \in \SC_Y^{n+1}} \pi_{n+1} (S)\big)
= \big(Y \ominus (\textstyle\bigoplus_{S \in \SC_X^{n+1}} S)\big) \oplus \big(\textstyle\bigoplus_{S \in \SC_X^{n+1}} \pi_{n+1} (S)\big)\\
&= \Big[\Big(\big(\widebar{X} \oplus (\textstyle\bigoplus_{S \in \SC_X^{n+1}} S)\big) \oplus \big(\textstyle\bigoplus_{i \in [n]} \textstyle\bigoplus_{S \in \SC_X^i} \pi_i(S)\big)\Big) \ominus (\textstyle\bigoplus_{S \in \SC_X^{n+1}} S)\Big] \oplus \big(\textstyle\bigoplus_{S \in \SC_X^{n+1}} \pi_{n+1} (S)\big)\\
&= \Big[\Big(\big(\widebar{X} \oplus (\textstyle\bigoplus_{S \in \SC_X^{n+1}} S)\big) \ominus (\textstyle\bigoplus_{S \in \SC_X^{n+1}} S)\Big) \oplus \big(\textstyle\bigoplus_{i \in [n]} \textstyle\bigoplus_{S \in \SC_X^i} \pi_i(S)\big) \Big] \oplus \big(\textstyle\bigoplus_{S \in \SC_X^{n+1}} \pi_{n+1} (S)\big)\\
&= \Big(\widebar{X} \oplus \big(\textstyle\bigoplus_{i \in [n]} \textstyle\bigoplus_{S \in \SC_X^i} \pi_i(S)\big) \Big) \oplus \big(\textstyle\bigoplus_{S \in \SC_X^{n+1}} \pi_{n+1} (S)\big) = \widebar{X} \oplus \big(\textstyle\bigoplus_{i \in [n+1]} \textstyle\bigoplus_{S \in \SC_X^i} \pi_i(S)\big),
\end{align*}
where the sixth equality follows from the pairwise disjointness of the hypergraph transformations and by Part~\ref{prop:DirectDiff3} of Proposition~\ref{prop:DirectDiff}, the seventh equality follows from the pairwise disjointness of the hypergraph transformations and by Part~\ref{prop:DirectDiff1} of Proposition~\ref{prop:DirectDiff}, and the eighth equality holds since a direct sum of hypergraphs is independent of the order of the hypergraph summands. Therefore, Equation~\eqref{eq:DisTrans2} holds for $m=n+1$. Moreover, \eqref{eq:DisTrans2} is independent of the order of the $\pi_i$ since $X \in \Coin\big((\bigcirc_{i \in \sigma} \pi_{m+1-i})_{\sigma \in S_m}\big)$.
\end{proof}

Under appropriate circumstances we can modify the collection of distinguished hypergraphs of a given hypergraph transformation to obtain a new hypergraph transformation, and we now consider a particular class of such modifications.

\begin{definition}[\textbf{Upward closed subset of hypergraphs}]\label{def:UpCl}
Suppose $\SC \subseteq \X \subseteq \sX$. A subset $\SC' \subseteq \SC$ is \emph{upward closed} with respect to $\SC$ if $S \in \SC'$, $T \in \SC$, and $\C(S) \subseteq \C(T)$ imply $T \in \SC'$.
\end{definition}

\begin{proposition}\label{prop:UpCl}
Suppose $\SC' \subseteq \SC \subseteq \X \subseteq \sX$.
\begin{enumerate}[label=(\arabic*)]
\item $\SC'$ is upward closed with respect to $\SC$ if and only if $T \in \SC \setminus \SC'$, $S \in \SC$, and $\C(S) \subseteq \C(T)$ imply $S \in \SC \setminus \SC'$. \label{prop:UpCl1}
\item If $\SC'$ is upward closed with respect to $\SC$ and $\N \in \SC'$ then $\SC' = \SC$. \label{prop:UpCl2}
\item If $\SC'$ is upward closed with respect to $\SC$, and $\SC$ is component maximal in $\X$ with $\SC$-maximal subsets $\{\D_X\}_{X \in \X}$, then $\SC'$ is component maximal in $\X$ with $\SC'$-maximal subsets $\D'_X := \D_X \cap \SC'$ for $X \in \X$. \label{prop:UpCl3}
\end{enumerate}
\end{proposition}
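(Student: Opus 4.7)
My plan is to handle the three parts in order: Part~\ref{prop:UpCl1} is a direct contrapositive reformulation, Part~\ref{prop:UpCl2} is an immediate consequence of applying upward closure to $\N$, and Part~\ref{prop:UpCl3} is a systematic verification of the four clauses of Definition~\ref{def:CompMax} for the candidate maximal subsets $\D'_X := \D_X \cap \SC'$.

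For Part~\ref{prop:UpCl1}, I would observe that the defining condition of upward closure (namely $S \in \SC'$, $T \in \SC$, $\C(S) \subseteq \C(T)$ imply $T \in \SC'$) and the claimed equivalent condition differ only by moving a hypothesis to a negated conclusion. To establish the forward direction I would fix $T \in \SC \setminus \SC'$ and $S \in \SC$ with $\C(S) \subseteq \C(T)$, and observe that $S \in \SC'$ would force $T \in \SC'$ by upward closure, contradicting $T \in \SC \setminus \SC'$; hence $S \in \SC \setminus \SC'$. The reverse direction is entirely symmetric, so I would only write one side in detail.

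For Part~\ref{prop:UpCl2}, I would simply note that $\C(\N) = \emptyset$, so $\C(\N) \subseteq \C(T)$ holds trivially for every $T \in \SC$. Applying the defining condition of upward closure with $S = \N \in \SC'$ immediately yields $T \in \SC'$ for every $T \in \SC$, so $\SC \subseteq \SC'$; the reverse inclusion is given by hypothesis.

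For Part~\ref{prop:UpCl3}, I would verify each of Properties~\ref{def:CompMax1}--\ref{def:CompMax4} of Definition~\ref{def:CompMax} with $\SC'$ in place of $\SC$ and $\D'_X$ as defined. Pairwise component disjointness (Property~\ref{def:CompMax1}) and the containment $\C(T) \subseteq \C(X)$ for $T \in \D'_X$ (Property~\ref{def:CompMax3}) are inherited immediately from $\D_X \supseteq \D'_X$. Property~\ref{def:CompMax2} follows from Part~\ref{prop:UpCl2}: if $\N \in \SC'$ then $\SC' = \SC$, so $\D'_X = \D_X$ and $\N \in \D'_X$. The only step making essential use of upward closure is Property~\ref{def:CompMax4}, which I expect to be the main point of the proof: given $S \in \SC'$ with $\C(S) \subseteq \C(X)$, component maximality of $\SC$ in $\X$ produces some $T \in \D_X$ with $\C(S) \subseteq \C(T)$, and upward closure applied to $S \in \SC'$, $T \in \SC$, $\C(S) \subseteq \C(T)$ then forces $T \in \SC'$, so $T \in \D_X \cap \SC' = \D'_X$ as required.
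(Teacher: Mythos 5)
Your proposal is correct and follows essentially the same route as the paper's proof: a direct contrapositive argument for Part~\ref{prop:UpCl1}, the trivial inclusion $\C(\N) \subseteq \C(T)$ for Part~\ref{prop:UpCl2}, and a clause-by-clause verification of Definition~\ref{def:CompMax} for Part~\ref{prop:UpCl3} with upward closure invoked exactly where the paper invokes it, at Property~\ref{def:CompMax4}. The only cosmetic difference is that the paper splits Part~\ref{prop:UpCl3} into the cases $\N \in \SC'$ and $\N \notin \SC'$ up front, whereas you fold that case distinction into the treatment of Property~\ref{def:CompMax2}; both are fine.
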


\begin{proof}
\ref{prop:UpCl1} For the forward direction, suppose $\SC'$ is upward closed with respect to $\SC$, $T \in \SC \setminus \SC'$, $S \in \SC$, and $\C(S) \subseteq \C(T)$. Since $\SC'$ is upward closed and $T \notin \SC'$ we must have $S \notin \SC'$. For the reverse direction, if $S \in \SC'$, $T \in \SC$, and $\C(S) \subseteq \C(T)$ then we must have $T \in \SC'$. Hence $\SC'$ is upward closed with respect to $\SC$.

\ref{prop:UpCl2} For all $T \in \SC$ we have $\C(\N) \subseteq \C(T)$ and hence $T \in \SC'$. It follows that $\SC' = \SC$.

\ref{prop:UpCl3} We need to show that the subsets $\D'_X$, for $X \in \X$, satisfy Properties~\ref{def:CompMax1} to \ref{def:CompMax4} of Definition~\ref{def:CompMax}. If $\N \in \SC'$ then $\SC' = \SC$ by Part~\ref{prop:UpCl2} of this proposition and hence $\D'_X = \D_X$ for $X \in \X$, so $\SC'$ is component maximal. Suppose now that $\N \notin \SC'$, and let $X \in \X$. Properties~\ref{def:CompMax1} and \ref{def:CompMax3} follow immediately from the definition of $\D'_X$ in terms of $\D_X$, and Property~\ref{def:CompMax2} holds trivially since $\N \notin \SC'$. For Property~\ref{def:CompMax4}, suppose $S \in \SC'$ with $\C(S) \subseteq \C(X)$. Then, since $S \in \SC$, there exists $T \in \D_X$ such that $\C(S) \subseteq \C(T)$ by Property~\ref{def:CompMax4}. Now, since $\SC'$ is upward closed, it follows that $S \in \SC'$, $T \in \SC$, and $\C(S) \subseteq \C(T)$ imply $T \in \SC'$ and therefore $T \in \D'_X$. So $\SC'$ is component maximal in $\X$ with $\SC'$-maximal subsets $\D'_X$ for $X \in \X$.
\end{proof}

\begin{definition}[\textbf{Support, support reduction/augmentation}]\label{def:Supp}
Let $\T := (\X,\pi,\SC)$ be a hypergraph transformation on $\sX$. The \emph{support} of $\T$, denoted $\Supp(\T)$, is defined by $\Supp(\T) := \{\, X \in \X \mid \pi(X) \ne X \,\}$.

Let $\T' := (\X,\pi',\SC')$ be another hypergraph transformation on $\sX$. Then $\T'$ is a \emph{support reduction} of $\T$ corresponding to $\SC'$ if $\SC' \subseteq \SC$, $\SC'$ is upward closed with respect to $\SC$, and $\pi'|_{\SC'} = \pi|_{\SC'}$. In this case we also say that $\T$ is a \emph{support augmentation} of $\T'$ corresponding to $\SC$.
\end{definition}

\begin{remark}
For a hypergraph transformation $\T := (\X,\pi,\SC)$, the subset $\X \setminus \Supp(\T)$ of $\X$ is the set of fixed points of $\T$, that is $\X \setminus \Supp(\T) = \{\, X \in \X \mid \pi(X) = X \,\}$.
\end{remark}

\begin{lemma}\label{lemma:EqualHT}
If $\T := (\X,\pi,\SC)$ and $\T' := (\X,\pi',\SC)$ are two hypergraph transformations on $\sX$ and if $\pi'|_{\SC} = \pi|_{\SC}$ then $\pi' = \pi$, hence $\T = \T'$.
\end{lemma}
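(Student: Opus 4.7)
The plan is to observe that Condition \ref{Ht3} of Definition~\ref{def:Ht} uniquely determines $\pi(X)$ from the data $\SC$ and $\pi|_{\SC}$, so if $\T$ and $\T'$ share $\SC$ and agree on $\SC$ then they must agree everywhere on $\X$.

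First I would note that the $\SC$-maximal subsets $\{\D_X\}_{X \in \X}$ associated with a hypergraph transformation depend only on the pair $(\X,\SC)$: Definition~\ref{def:CompMax} makes no reference to the partial transformation, and Proposition~\ref{prop:CompMax}\ref{prop:CompMax1} guarantees uniqueness. Hence the $\SC$-maximal subsets associated with $\T$ coincide with those associated with $\T'$; call them $\{\D_X\}_{X \in \X}$ in both cases.

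Next, fix $X \in \X$ and compare the sets $\SC_X$ (defined for $\T$) and $\SC'_X$ (defined for $\T'$). By definition, $\SC_X = \{\, S \in \D_X \mid V(\pi(S)) \cap V(X \ominus S) = \emptyset \,\}$ and similarly for $\SC'_X$ with $\pi'$ in place of $\pi$. Since $\D_X \subseteq \SC$ and $\pi'|_{\SC} = \pi|_{\SC}$, we have $\pi'(S) = \pi(S)$ for every $S \in \D_X$, so $\SC_X = \SC'_X$. Consequently, the induced strong subhypergraph $\widebar{X} := X \ominus \bigl(\bigoplus_{S \in \SC_X} S\bigr)$ is identical for $\T$ and $\T'$.

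Finally, applying the direct sum decomposition in Condition~\ref{Ht3} of Definition~\ref{def:Ht} to both transformations yields
\[
\pi(X) = \widebar{X} \oplus \Bigl(\bigoplus_{S \in \SC_X} \pi(S)\Bigr) = \widebar{X} \oplus \Bigl(\bigoplus_{S \in \SC_X} \pi'(S)\Bigr) = \pi'(X),
\]
where the middle equality uses $\pi'|_{\SC} = \pi|_{\SC}$ and $\SC_X \subseteq \SC$. Since $X \in \X$ was arbitrary, $\pi = \pi'$ and therefore $\T = \T'$. There is no real obstacle here: the work was already done inside the definition, which forces $\pi$ to be expressible entirely in terms of $\pi|_{\SC}$ and data computed from $(\X,\SC)$ alone.
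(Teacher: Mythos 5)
Your proof is correct and takes essentially the same route as the paper's: both observe that the $\SC$-maximal subsets, hence the sets $\SC_X$ and the subhypergraphs $\widebar{X}$, are determined by $(\X,\SC)$ and $\pi|_{\SC}$ alone, and then read off $\pi(X) = \pi'(X)$ from the direct sum decomposition in Condition~\ref{Ht3} of Definition~\ref{def:Ht}. Your explicit appeal to Part~\ref{prop:CompMax1} of Proposition~\ref{prop:CompMax} for the uniqueness of the $\SC$-maximal subsets is a small clarification that the paper leaves implicit.
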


\begin{proof}
For notational clarity we denote $\T' := (\X,\pi',\SC')$, so that $\SC' = \SC$, and $\D'_X = \D_X$ for all $X \in \X$. Let $X \in \X$. Then $\SC'_X = \big\{\, S \in \D'_X \mid V\big(\pi'(S)\big) \cap V(X \ominus S) = \emptyset \,\big\} = \big\{\, S \in \D_X \mid V\big(\pi(S)\big) \cap V(X \ominus S) = \emptyset \,\big\} = \SC_X$. So defining $\widebar{X}' := X \ominus (\bigoplus_{S \in \SC'_X} S)$ and $\widebar{X} := X \ominus (\bigoplus_{S \in \SC_X} S)$ we have $\widebar{X}' = \widebar{X}$, therefore $\pi'(X) = \widebar{X}' \oplus \big(\bigoplus_{S \in \SC'_X} \pi'(S)\big) = \widebar{X} \oplus \big(\bigoplus_{S \in \SC_X} \pi(S)\big) = \pi(X)$. Therefore $\pi' = \pi$.
\end{proof}

\begin{proposition}\label{prop:HypTrUpCl}
Suppose $\T := (\X,\pi,\SC)$ is a hypergraph transformation on $\sX$, and $\SC' \subseteq \SC$ is an upward closed subset with respect to $\SC$. Then there exists a hypergraph transformation $\T' := (\X,\pi',\SC')$ such that $\T'$ is the unique support reduction of $\T$ corresponding to $\SC'$. Further, $\Supp(\T') \subseteq \Supp(\T)$, and for all $X \in \X$ we have $\D'_X = \D_X \cap \SC'$, $\SC'_X = \SC_X \cap \SC'$, and $\pi'(X) = \big(X \ominus (\bigoplus_{S \in \SC'_X} S)\big) \oplus \big(\bigoplus_{S \in \SC'_X} \pi(S)\big)$.
\end{proposition}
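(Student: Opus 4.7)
The plan is to define $\pi' \colon \X \to \sX$ directly by the explicit formula $\pi'(X) := \big(X \ominus (\bigoplus_{S \in \SC_X \cap \SC'} S)\big) \oplus \big(\bigoplus_{S \in \SC_X \cap \SC'} \pi(S)\big)$ for $X \in \X$, and then verify that $\T' := (\X,\pi',\SC')$ is a hypergraph transformation. The maximality condition~\ref{Ht2} for $\SC'$, together with the identity $\D'_X = \D_X \cap \SC'$, will come for free from Proposition~\ref{prop:UpCl}\ref{prop:UpCl3}.

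Next I would show that the formula agrees with $\pi$ on $\SC'$. For $S \in \SC'$, Proposition~\ref{prop:HTproperties}\ref{prop:SS} gives $\SC_S = \{S\}$, so $\SC_S \cap \SC' = \{S\}$, and the formula collapses to $\pi'(S) = (S \ominus S) \oplus \pi(S) = \N \oplus \pi(S) = \pi(S)$. Nonredundancy~\ref{Ht1} for $\T'$ then transfers directly from $\T$, with the case $\N \in \SC'$ handled by Proposition~\ref{prop:UpCl}\ref{prop:UpCl2}, which forces $\SC' = \SC$ and hence $\pi'(\N) = \pi(\N) \ne \N$. I would then compute the auxiliary set $\SC'_X$ associated with $\T'$, namely $\{S \in \D'_X \mid V(\pi'(S)) \cap V(X \ominus S) = \emptyset\}$; using $\D'_X = \D_X \cap \SC'$ together with $\pi'|_{\SC'} = \pi|_{\SC'}$, this simplifies to $\SC_X \cap \SC'$, which is exactly the index set appearing in the defining formula for $\pi'(X)$. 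Pairwise vertex-disjointness of $\pi'(\SC'_X)$ and the equality $|\pi'(\SC'_X)| = |\SC'_X|$ then follow by restriction from the corresponding properties of $\pi(\SC_X)$ under $\T$, and the remaining direct-sum identity in Condition~\ref{Ht3} is precisely the definition of $\pi'(X)$.

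The remaining claims are then short. By construction $\SC' \subseteq \SC$, $\SC'$ is upward closed with respect to $\SC$, and $\pi'|_{\SC'} = \pi|_{\SC'}$, so $\T'$ is a support reduction of $\T$ corresponding to $\SC'$; uniqueness follows from Lemma~\ref{lemma:EqualHT} applied to any other candidate hypergraph transformation $(\X,\pi'',\SC')$ agreeing with $\pi$ on $\SC'$. The inclusion $\Supp(\T') \subseteq \Supp(\T)$ follows from Proposition~\ref{prop:HTproperties}\ref{prop:nonfixed}: if $\pi'(X) \ne X$ then $\SC'_X \ne \emptyset$, whence $\SC_X \supseteq \SC'_X$ is also nonempty and therefore $\pi(X) \ne X$. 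I expect the main subtlety to be the mild bootstrap when verifying Condition~\ref{Ht3}: one must confirm $\pi'|_{\SC'} = \pi|_{\SC'}$ from the explicit formula \emph{before} the identification $\SC'_X = \SC_X \cap \SC'$ becomes meaningful, after which every remaining verification is a direct substitution.
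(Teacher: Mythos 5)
Your proposal is correct and follows essentially the same route as the paper's proof: both define $\pi'$ by the explicit direct-sum formula indexed by $\SC_X \cap \SC'$, obtain $\D'_X = \D_X \cap \SC'$ from Proposition~\ref{prop:UpCl}\ref{prop:UpCl3}, verify agreement with $\pi$ on $\SC'$ via the singleton identity $\SC_S = \{S\}$, and conclude uniqueness from Lemma~\ref{lemma:EqualHT} and the support inclusion from Proposition~\ref{prop:HTproperties}\ref{prop:nonfixed}. The ``bootstrap'' you flag is exactly what the paper handles by first introducing the auxiliary restriction $\pi^{\ast} := \pi|_{\SC'}$ and the set $\SC^{\ast}_X$ before defining $\pi'$, a purely presentational difference.
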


\begin{proof}
Since $\T$ is a hypergraph transformation, $\SC$ is component maximal with $\SC$-maximal subsets $\{\D_X\}_{X \in \X}$, and since $\SC'$ is upward closed with respect to $\SC$ it follows from Part~\ref{prop:UpCl3} of Proposition~\ref{prop:UpCl} that $\SC'$ is component maximal in $\X$ with $\SC'$-maximal subsets $\D'_X := \D_X \cap \SC'$ for $X \in \X$.

If $\N \in \SC'$ then, since $\SC'$ is upward closed with respect to $\SC$, Part~\ref{prop:UpCl2} of Proposition~\ref{prop:UpCl} implies $\SC' = \SC$. It follows from Lemma~\ref{lemma:EqualHT} that the only support reduction of $\T$ corresponding to $\SC'$ is $\T$ itself.

Suppose now that $\N \notin \SC'$. We construct the partial transformation $\pi' \colon \X \to \sX$ by defining $\pi'(X)$ for all $X \in \X$. First define $\pi^{\ast} \colon \SC' \to \sX$ by $\pi^{\ast}(S) := \pi(S)$ for all $S \in \SC'$. Let $X \in \X$ and define the set $\SC^{\ast}_X := \big\{\, S \in \D'_X \mid V\big(\pi^{\ast}(S)\big) \cap V(X \ominus S) = \emptyset \,\big\}$. Then $\SC^{\ast}_X = \SC_X \cap \SC'$: $S \in \SC^{\ast}_X$ if and only if $S \in \D'_X$ and $V\big(\pi^{\ast}(S)\big) \cap V(X \ominus S) = \emptyset$ if and only if $S \in \D_X \cap \SC'$ and $V\big(\pi(S)\big) \cap V(X \ominus S) = \emptyset$ if and only if $S \in \SC_X \cap \SC'$. Further, since $\pi(\SC_X)$ consists of pairwise vertex-disjoint hypergraphs the subset $\pi^{\ast}(\SC^{\ast}_X) = \pi(\SC^{\ast}_X) \subseteq \pi(\SC_X)$ is also pairwise vertex disjoint, and since $\lvert \pi(\SC_X) \rvert = \lvert \SC_X \rvert$ we have $\lvert \pi^{\ast}(\SC^{\ast}_X) \rvert = \lvert \pi(\SC^{\ast}_X) \rvert = \lvert \SC^{\ast}_X \rvert$. Define $\widebar{X}^{\ast} := X \ominus (\bigoplus_{S \in \SC^{\ast}_X} S)$ and $\pi'(X) := \widebar{X}^{\ast} \oplus \big(\bigoplus_{S \in \SC^{\ast}_X} \pi^{\ast}(S)\big)$.

We show that $\T' := (\X,\pi',\SC')$ is a hypergraph transformation. Note that $\pi'|_{\SC'} = \pi^{\ast} = \pi|_{\SC'}$: the first equality holds since if $T \in \SC'$ then $\D_T = \{T\}$ or $\D_T = \{\N,T\}$ by Parts~\ref{prop:CompMax2} and \ref{prop:CompMax3} of Proposition~\ref{prop:CompMax}, so $\D'_T = \{T\}$, hence $\SC^{\ast}_T = \{T\}$, therefore the definition of $\pi'$ gives $\pi'(T) = \pi^{\ast}(T)$; the second equality follows from the definition of $\pi^{\ast}$. For nonredundancy, if $S \in \SC'$ then $\C(S) \cap \C\big(\pi'(S)\big) = \C(S) \cap \C\big(\pi(S)\big) = \emptyset$, where the last equality holds since $\T$ is a hypergraph transformation. We have shown that $\SC'$ is component maximal in $\X$ with $\SC'$-maximal subsets $\D'_X := \D_X \cap \SC'$ for $X \in \X$. To show that the direct sum decomposition is preserved, let $X \in \X$. Define $\SC'_X := \big\{\, S \in \D'_X \mid V\big(\pi'(S)\big) \cap V(X \ominus S) = \emptyset \,\big\}$, and note that $\SC'_X = \SC^{\ast}_X$ since $\pi'|_{\SC'} = \pi^{\ast}$. Then $\pi'(\SC'_X) = \pi^{\ast}(\SC^{\ast}_X)$ consists of pairwise vertex-disjoint hypergraphs, and $\lvert \pi'(\SC'_X) \rvert = \lvert \pi^{\ast}(\SC^{\ast}_X) \rvert = \lvert \SC^{\ast}_X \rvert = \lvert \SC'_X \rvert$. Denoting $\widebar{X}' := X \ominus (\bigoplus_{S \in \SC'_X} S)$, and noting that $\widebar{X}' = \widebar{X}^{\ast}$ since $\SC'_X = \SC^{\ast}_X$, we have $\pi'(X) := \widebar{X}^{\ast} \oplus \big(\bigoplus_{S \in \SC^{\ast}_X} \pi^{\ast}(S)\big) = \widebar{X}' \oplus \big(\bigoplus_{S \in \SC'_X} \pi'(S)\big)$, so the decomposition $X = \widebar{X}' \oplus (\bigoplus_{S \in \SC'_X} S)$ is preserved by $\pi'$. We conclude that $\T'$ is a hypergraph transformation. Moreover, $\T'$ is a support reduction of $\T$ corresponding to $\SC'$, and uniqueness of $\T'$ follows from Lemma~\ref{lemma:EqualHT}.

Finally, for $X \in \X$, $\SC^{\ast}_X = \SC_X \cap \SC'$ and $\SC'_X = \SC^{\ast}_X$ imply $\SC'_X = \SC_X \cap \SC'$. Moreover, $\Supp(\T') \subseteq \Supp(\T)$: $X \in \Supp(\T')$ implies $\pi'(X) \ne X$ implies $\SC'_X \ne \emptyset$, by Part~\ref{prop:nonfixed} of Proposition~\ref{prop:HTproperties}, implies $\SC_X \ne \emptyset$, since $\SC'_X = \SC_X \cap \SC'$, implies $\pi(X) \ne X$, by Part~\ref{prop:nonfixed} of Proposition~\ref{prop:HTproperties}, implies $X \in \Supp(\T)$.
\end{proof}

\subsection{Examples of hypergraph transformations}\label{subsec:Transformations_Ex}
Here we discuss two main examples of hypergraph transformations, namely hyperedge addition/deletion and hypergraph addition/deletion, as well as a combined hypergraph\text{--}hyperedge addition transformation.

\subsubsection{Hyperedge addition/deletion hypergraph transformations}\label{subsubsec:Transformations_Ex_HE}
The \emph{hyperedge space} of $\sX$ generalises the notion of the edge space of a graph \cite[Chapter 1.9, Page 23]{Diestel2017}.

\begin{definition}[\textbf{Hyperedge space}]
The \emph{hyperedge space} $(\E, \boxplus, \boxdot, \ZZ /2 \ZZ)$ of $\sX$ is the vector space over the field $\ZZ /2 \ZZ$ with underlying set $\E := \powerset \big(E(\sX)\big)$, where addition $\boxplus$ is the operation of symmetric difference, and scalar multiplication $\boxdot$ is given by $0 \boxdot F := \emptyset$ and $1 \boxdot F := F$ for all $F \in \E$. The hyperedge space has a basis given by the collection of all singleton sets in $\E$.
\end{definition}

\begin{notation}
Suppose $H \subseteq E(\sX)$ is nonempty and $X \in \sX$ with $\bigcup H \subseteq V(X)$. We denote by $X \boxplus H$ the hypergraph in $\sX$ with vertex set $V(X)$ and hyperedge set $E(X) \boxplus H$.
\end{notation}

\begin{definition}[\textbf{Hyperedge addition/deletion partial transformation}]
Suppose $H \subseteq E(\sX)$ is nonempty and $\X \subseteq \sX$. We define the \emph{hyperedge addition/deletion partial transformation} $\pi_H \colon \X \to \sX$ such that, for $X \in \X$,
\begin{equation}
\pi_H (X) =
\begin{cases}
X \boxplus H & \text{if $\bigcup H \subseteq V(X)$},\\
X & \text{if $\bigcup H \nsubseteq V(X)$.}
\end{cases}
\end{equation}
Therefore, if $\bigcup H \subseteq V(X)$ then $\pi_H$ adds all of the hyperedges in $H \setminus E(X)$ to $X$ and deletes all of the hyperedges in $H \cap E(X)$ from $X$, otherwise $\pi_H$ fixes $X$.
\end{definition}

\begin{notation}
Suppose $H \subseteq E(\sX)$ and $X \in \sX$. We denote by $X \wedge H$ the hypergraph in $\sX$ given by $X \wedge H := \bigcup \{\, C \in \C(X) \mid \text{$e \cap V(C) \ne \emptyset$ for some $e \in H$} \,\}$. Note that $(X \wedge H) \wedge H = X \wedge H$, and if $H = \emptyset$ then $X \wedge H = \N$.
\end{notation}

\begin{definition}[\textbf{$H$-closed set of hypergraphs}]
Suppose $H \subseteq E(\sX)$ is nonempty and $\X \subseteq \sX$. We say that $\X$ is \emph{$H$-closed for addition} (resp. \emph{$H$-closed for deletion}) if $X \in \X$, $\bigcup H \subseteq V(X)$, and $H \cap E(X) = \emptyset$ (resp. $H \subseteq E(X)$) imply $X \wedge H \in \X$. We say that $\X$ is \emph{$H$-closed} if $X \in \X$ and $\bigcup H \subseteq V(X)$ imply $X \wedge H \in \X$.
\end{definition}

\begin{proposition}\label{prop:HEpt}
Suppose $H \subseteq E(\sX)$ is nonempty, $\X \subseteq \sX$ is $H$-closed, and $\pi_H \colon \X \to \sX$ is the hyperedge addition/deletion partial transformation. Define
\begin{equation}
\SC := \{\, S \in \X \mid \text{$\textstyle\bigcup H \subseteq V(S)$ and $S = S \wedge H$} \,\}, \label{prop:HEpt1}
\end{equation}
and for $X \in \X$ define
\begin{equation} \label{prop:HEt4}
\D_X :=
\begin{cases}
\{X \wedge H\} & \text{if $\bigcup H \subseteq V(X)$},\\
\emptyset & \text{if $\bigcup H \nsubseteq V(X)$}.
\end{cases}
\end{equation}
Then $\T_H := (\X,\pi_H,\SC)$ is a hypergraph transformation on $\sX$ with $\SC$-maximal subsets $\{\D_X\}_{X \in \X}$, and the collection of distinguished hypergraphs $\SC$ is greatest for $\pi_H$ with respect to inclusion.
\end{proposition}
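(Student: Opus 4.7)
The plan is to verify each of the three conditions in Definition~\ref{def:Ht} for $\T_H$ with the stated $\SC$ and $\{\D_X\}_{X \in \X}$, and then to show that any strictly larger candidate collection of distinguished hypergraphs would violate nonredundancy. A useful preliminary observation is that $\N \notin \SC$: since $H$ is nonempty and every hyperedge has at least one vertex, $\bigcup H$ is nonempty and cannot be contained in $V(\N) = \emptyset$. This removes the clauses of Definitions~\ref{def:Ht} and \ref{def:CompMax} that treat the null hypergraph separately.

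First I would establish the component maximality of $\SC$ in $\X$ with the given $\SC$-maximal subsets. Properties~\ref{def:CompMax1}--\ref{def:CompMax3} of Definition~\ref{def:CompMax} are immediate because each $\D_X$ is empty or a singleton and $\C(X \wedge H) \subseteq \C(X)$. For Property~\ref{def:CompMax4}, if $S \in \SC$ satisfies $\C(S) \subseteq \C(X)$ then $\bigcup H \subseteq V(S) \subseteq V(X)$, so $\D_X = \{X \wedge H\}$; every component of $S$ lies in $\C(X)$ and, because $S = S \wedge H$, is touched by some $e \in H$, so it belongs to $\C(X \wedge H)$. This step also requires $X \wedge H \in \SC$ whenever $\bigcup H \subseteq V(X)$: $X \wedge H$ lies in $\X$ by $H$-closedness, satisfies $\bigcup H \subseteq V(X \wedge H)$ because each vertex of $\bigcup H$ forces its component of $X$ into $X \wedge H$, and obeys the idempotence $(X \wedge H) \wedge H = X \wedge H$ noted after the definition of $X \wedge H$.

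Next I would handle nonredundancy and the direct-sum decomposition. For nonredundancy, fix $S \in \SC$ and a component $C$ of $S$, pick $e \in H$ with $e \cap V(C) \ne \emptyset$, and argue by cases that $C \notin \C(S \boxplus H)$: if $E(C) \cap H \ne \emptyset$ then $E(C) \not\subseteq E(S) \boxplus H$, so $C$ is not even a strong subhypergraph of $\pi_H(S)$; otherwise $e \notin E(S)$, because hyperedges of $S$ are contained in single components, so $e$ is toggled on, either inside $V(C)$ (producing a properly larger connected strong subhypergraph on $V(C)$) or across $V(C)$ and another component (absorbing $V(C)$ into a strictly larger component of $S \boxplus H$). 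This case analysis is the main conceptual obstacle. For the direct-sum condition, the case $\bigcup H \nsubseteq V(X)$ is trivial since $\SC_X = \emptyset$ and $\pi_H(X) = X$. Otherwise $\SC_X = \{X \wedge H\}$, because $V(\pi_H(X \wedge H)) = V(X \wedge H)$ is disjoint from $V(X \ominus (X \wedge H))$; the required identity $X \boxplus H = (X \ominus (X \wedge H)) \oplus ((X \wedge H) \boxplus H)$ then reduces to a short symmetric-difference computation, using that every $e \in H$ either already lies in $E(X \wedge H)$ or is contained in $V(X \wedge H)$, while $H$ is disjoint from $E(X \ominus (X \wedge H))$.

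Finally I would show that $\SC$ is greatest. Suppose $(\X, \pi_H, \SC^{\ast})$ is a hypergraph transformation for some $\SC^{\ast} \subseteq \X$, and let $S' \in \SC^{\ast}$. If $\bigcup H \nsubseteq V(S')$ then $\pi_H(S') = S'$, so nonredundancy forces $S' = \N$, which in turn is excluded because $\pi_H(\N) = \N$. Hence $\bigcup H \subseteq V(S')$. If some component $C$ of $S'$ were untouched by $H$, then every $e \in H$ would be disjoint from $V(C)$, so $E(C)$ would be unchanged in $\pi_H(S') = S' \boxplus H$ and no new hyperedges would meet $V(C)$; in particular $C \in \C(\pi_H(S'))$, contradicting nonredundancy. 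Therefore $S' = S' \wedge H$, so $S' \in \SC$ and $\SC^{\ast} \subseteq \SC$, completing the proof.
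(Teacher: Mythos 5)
Your proposal is correct and follows essentially the same route as the paper's proof: verify component maximality via the singleton sets $\{X \wedge H\}$, check nonredundancy and the direct-sum decomposition using $\SC_X = \D_X$ and the identity $X \boxplus H = (X \ominus (X \wedge H)) \oplus ((X \wedge H) \boxplus H)$, and derive greatestness by applying nonredundancy to any candidate distinguished hypergraph. You merely supply more detail than the paper in two places it treats tersely — the case analysis showing $S \boxplus H$ modifies every component of $S$, and the verification that $X \wedge H \in \SC$ via $H$-closedness — and you phrase greatestness for an arbitrary valid collection $\SC^{\ast}$ rather than for $\SC \cup \{T\}$, which is the same argument.
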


\begin{proof}
For nonredundancy, if $S \in \SC$ then $\pi_H(S) = S \boxplus H$ modifies all components of $S$, since $S = S \wedge H$, so $\C(S) \cap \C\big(\pi_H(S)\big) = \emptyset$; additionally, $\N \notin \SC$.

To see that $\SC$ is component maximal, let $X \in \X$. Conditions~\ref{def:CompMax1} to \ref{def:CompMax3} of Definition~\ref{def:CompMax} follow immediately from the definition of $\D_X$. For Condition~\ref{def:CompMax4} of Definition~\ref{def:CompMax}, if $S \in \SC$ and $\C(S) \subseteq \C(X)$ then $S = X \wedge H$, so $\D_X = \{S\}$; note that if $\C(S) \nsubseteq \C(X)$ for all $S \in \SC$ then, since $\X$ is $H$-closed, $\D_X = \emptyset$. It follows that $\D_X$ is $\SC$-maximal.

To show the direct sum decomposition is preserved, let $X \in \X$. Note that $\SC_X = \D_X$, so $\lvert \SC_X \rvert = \lvert \pi_H(\SC_X) \rvert \le 1$, and it also holds trivially that $\pi_H(\SC_X)$ is pairwise vertex disjoint. Now, note that $\SC_X \ne \emptyset$ if and only if $\D_X \ne \emptyset$ if and only if $\bigcup H \subseteq V(X)$. So, if $\SC_X = \{S\}$ then $\pi_H (X) = X \boxplus H = (X \ominus S) \oplus (S \boxplus H) = \widebar{X} \oplus \pi_H (S)$ where $\widebar{X} = X \ominus S$; and if $\SC_X = \emptyset$ then $\pi_H (X) = X = \widebar{X}$. In any case we have $\pi_H (X) = \widebar{X} \oplus \big(\bigoplus_{S \in \SC_X} \pi_H(S)\big)$, where $\widebar{X} := X \ominus \big(\bigoplus_{S \in \SC_X} S\big)$. We conclude that $\T_H$ is a hypergraph transformation on $\sX$ with $\SC$-maximal subsets $\{\D_X\}_{X \in \X}$.

Finally we show that $\SC$ is greatest for $\pi_H$, so let $T \in \X$ be such that $\big(\X,\pi_H,\SC \cup \{T\}\big)$ is a hypergraph transformation. Nonredundancy implies $T \ne \N$ since $\pi_H (\N) = \N$, and also $\pi_H (T) \ne T$ since $\C(T) \cap \C\big(\pi_H (T)\big) = \emptyset$, hence we must have $\bigcup H \subseteq V(T)$ by the definition of $\pi_H$. Suppose there exists $C \in \C(T)$ such that $e \cap V(C) = \emptyset$ for all $e \in H$. Then $\C\big(\pi_H (T)\big) = \C(T \boxplus H) = \{C\} \cup \C\big((T \setminus C) \boxplus H\big)$, so $C \in \C(T) \cap \C\big(\pi_H (T)\big)$, contradicting nonredundancy. So we must have $T = T \wedge H$. Therefore $T \in \SC$.
\end{proof}

\begin{definition}[\textbf{Hyperedge addition partial transformation}]
Suppose $H \subseteq E(\sX)$ is nonempty and $\X \subseteq \sX$. We define the \emph{hyperedge addition partial transformation} $\pi^+_H \colon \X \to \sX$ such that, for $X \in \X$,
\begin{equation}
\pi^+_H (X) =
\begin{cases}
X \boxplus H & \text{if $\bigcup H \subseteq V(X)$ and $H \cap E(X) = \emptyset$},\\
X & \text{if $\bigcup H \nsubseteq V(X)$ or $H \cap E(X) \ne \emptyset$}.
\end{cases}
\end{equation}
Therefore, if $\bigcup H \subseteq V(X)$ and $H \cap E(X) = \emptyset$ then $\pi^+_H$ adds all of the hyperedges in $H$ to $X$, otherwise $\pi^+_H$ fixes $X$.
\end{definition}

\begin{proposition}\label{prop:HEtadd}
Suppose $H \subseteq E(\sX)$ is nonempty, $\X \subseteq \sX$ is $H$-closed for addition, and $\pi^+_H \colon \X \to \sX$ is the hyperedge addition partial transformation. Define
\begin{equation}
\SC^+ := \{\, S \in \X \mid \text{$\textstyle\bigcup H \subseteq V(S)$, $H \cap E(S) = \emptyset$, and $S = S \wedge H$} \,\}, \label{prop:HEtadd3}
\end{equation}
and for $X \in \X$ define
\begin{equation} \label{prop:HEtadd4}
\D^+_X :=
\begin{cases}
\{X \wedge H\} & \text{if $\bigcup H \subseteq V(X)$ and $H \cap E(X) = \emptyset$},\\
\emptyset & \text{if $\bigcup H \nsubseteq V(X)$ or $H \cap E(X) \ne \emptyset$}.
\end{cases}
\end{equation}
Then:
\begin{enumerate}[label={(\arabic*)}]
\item $\T^+_H := (\X,\pi^+_H,\SC^+)$ is a hypergraph transformation on $\sX$ with $\SC^+$-maximal subsets $\{\D^+_X\}_{X \in \X}$. \label{prop:HEtadd5}
\item If $\X$ is $H$-closed then $\T^+_H$ is the support reduction of the hypergraph addition/deletion transformation $\T_H := (\X,\pi_H,\SC)$ corresponding to $\SC^+$, such that $\SC^+ = \{\, S \in \SC \mid H \cap E(S) = \emptyset \,\}$ and $\D^+_X = \D_X \cap \SC^+$ for $X \in \X$. \label{prop:HEtadd6}
\end{enumerate}
\end{proposition}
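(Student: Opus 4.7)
The plan is to mirror the proof of Proposition~\ref{prop:HEpt} for part~\ref{prop:HEtadd5}, while accommodating the additional requirement $H \cap E(X) = \emptyset$ in the definitions of $\SC^+$ and $\D^+_X$, and then to invoke Proposition~\ref{prop:HypTrUpCl} for part~\ref{prop:HEtadd6}.

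For part~\ref{prop:HEtadd5}, I verify the three conditions of Definition~\ref{def:Ht} in turn. Nonredundancy is immediate: for $S \in \SC^+$, the hypotheses $\bigcup H \subseteq V(S)$ and $H \cap E(S) = \emptyset$ force $\pi^+_H(S) = S \boxplus H$, and $S = S \wedge H$ means every component of $S$ contains a vertex of some hyperedge in $H$, so every such component is altered by adding the hyperedges in $H$; hence $\C(S) \cap \C\bigl(\pi^+_H(S)\bigr) = \emptyset$. Also $\N \notin \SC^+$ since $H$ is nonempty. The only nontrivial part of component-maximality is Condition~\ref{def:CompMax4} of Definition~\ref{def:CompMax}. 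Given $S \in \SC^+$ with $\C(S) \subseteq \C(X)$, I argue $S = X \wedge H$: the chain $\bigcup H \subseteq V(S) \subseteq V(X)$ together with $\C(S) \subseteq \C(X)$ forces every component of $X$ that meets some $e \in H$ to lie in $\C(S)$, while $S = S \wedge H$ gives the reverse inclusion. Since $E(X \wedge H) \subseteq E(X)$ and every $e \in H \cap E(X)$ lies in a component of $X$ touched by $H$, we get $H \cap E(X) = H \cap E(X \wedge H) = H \cap E(S) = \emptyset$, so $\D^+_X = \{X \wedge H\} = \{S\}$ as required. Preservation of the direct sum decomposition then follows exactly as in Proposition~\ref{prop:HEpt}: $\SC^+_X = \D^+_X$ has cardinality at most one, and when $\SC^+_X = \{S\}$ we write $\pi^+_H(X) = X \boxplus H = (X \ominus S) \oplus (S \boxplus H) = \widebar{X} \oplus \pi^+_H(S)$, while $\SC^+_X = \emptyset$ gives $\pi^+_H(X) = X = \widebar{X}$.

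For part~\ref{prop:HEtadd6}, assuming $\X$ is $H$-closed, Proposition~\ref{prop:HEpt} provides $\T_H$ with $\SC = \{\, S \in \X \mid \bigcup H \subseteq V(S) \text{ and } S = S \wedge H \,\}$, from which $\SC^+ = \{\, S \in \SC \mid H \cap E(S) = \emptyset \,\}$ is immediate. The crucial step is to show that $\SC^+$ is upward closed in $\SC$. Suppose $S \in \SC^+$, $T \in \SC$, and $\C(S) \subseteq \C(T)$. I claim $S = T$: for any $C \in \C(T)$, $T = T \wedge H$ supplies some $e \in H$ with $e \cap V(C) \ne \emptyset$, and any vertex in $e \cap V(C)$ lies in $\bigcup H \subseteq V(S)$, hence belongs to some component $C' \in \C(S) \subseteq \C(T)$; this forces $C = C'$, so $C \in \C(S)$, giving $\C(T) \subseteq \C(S)$ and therefore $S = T \in \SC^+$. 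Combined with the obvious equality $\pi_H|_{\SC^+} = \pi^+_H|_{\SC^+}$, Proposition~\ref{prop:HypTrUpCl} then yields $\T^+_H$ as the unique support reduction of $\T_H$ corresponding to $\SC^+$ and delivers $\D^+_X = \D_X \cap \SC^+$; a short case split on whether $\bigcup H \subseteq V(X)$ and whether $H \cap E(X) = \emptyset$ reconciles this with the explicit formula for $\D^+_X$ in \eqref{prop:HEtadd4}.

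The main obstacle I anticipate is the upward-closure step. A priori $\SC^+$ is carved out of $\SC$ by an extra hyperedge condition whose propagation along $\C(S) \subseteq \C(T)$ is not evident, but the constraint $\bigcup H \subseteq V(S)$ together with $T = T \wedge H$ in fact forces the comparison relation to collapse to equality, after which upward closure is automatic and the machinery of Proposition~\ref{prop:HypTrUpCl} supplies everything else.
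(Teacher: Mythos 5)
Your proposal is correct and follows essentially the same route as the paper: part (1) is the direct adaptation of the proof of Proposition~\ref{prop:HEpt} (which the paper itself omits as "similar"), and part (2) is exactly the paper's argument — show $\SC^+$ is upward closed in $\SC$ because $\C(S) \subseteq \C(T)$ with $S \in \SC^+$, $T \in \SC$ forces $S = T$, then invoke Proposition~\ref{prop:HypTrUpCl}. Your justification of the collapse $S = T$ (via $\bigcup H \subseteq V(S)$ and $T = T \wedge H$) supplies a detail the paper leaves implicit, and it is sound.
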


\begin{proof}
\ref{prop:HEtadd5} We omit the proof as it is similar to the proof of Proposition~\ref{prop:HEpt}.

\ref{prop:HEtadd6} The subset $\SC^+ = \{\, S \in \SC \mid H \cap E(S) = \emptyset \,\}$ is upward closed with respect to $\SC$ since $S \in \SC^+$, $T \in \SC$, and $\C(S) \subseteq \C(T)$ imply $T = S$ and hence $T \in \SC^+$. Further, $\pi^+_H|_{\SC^+} = \pi_H|_{\SC^+}$, and Proposition~\ref{prop:HEpt} implies $\T_H$ is a hypergraph transformation since $\X$ is $H$-closed. It follows from Proposition~\ref{prop:HypTrUpCl} that $\T^+_H$ is the support reduction of $\T_H$ corresponding to $\SC^+$, and $\D^+_X = \D_X \cap \SC^+$ for $X \in \X$.
\end{proof}

\subsubsection{Hypergraph addition/deletion hypergraph transformations}\label{subsubsec:Transformations_Ex_HG}
The \emph{component space} of $\sX$ is a vector space of connected components.

\begin{definition}[\textbf{Component space}]
The \emph{component space} $(\Z, \boxplus, \boxdot, \ZZ /2 \ZZ)$ of $\sX$ is the vector space over the field $\ZZ /2 \ZZ$ with underlying set $\Z := \powerset \big(\C(\sX)\big)$, where addition $\boxplus$ is the operation of symmetric difference, and scalar multiplication $\boxdot$ is given by $0 \boxdot G := \emptyset$ and $1 \boxdot G := G$ for all $G \in \Z$. The component space has a basis given by the collection of all singleton sets in $\Z$.
\end{definition}

\begin{remark}
Note that while the notation for the addition and scalar multiplication operations is the same for both the hyperedge space and the component space, no ambiguity is possible.
\end{remark}

\begin{definition}[\textbf{Hypergraph addition/deletion partial transformation}]\label{def:HadT}
Let $W \in \sX^{\ast}$, and $\X \subseteq \sX$ with $\N \in \X$. We define the \emph{hypergraph addition/deletion partial transformation} $\pi_W \colon \X \to \sX$ such that, for $X \in \X$,
\begin{equation}
\pi_W (X) =
\begin{cases}
\bigoplus \C(X) \boxplus \C(W) & \text{if $\C(W) \subseteq \C(X)$ or $V(W) \cap V(X) = \emptyset$},\\
X & \text{if $\C(W) \nsubseteq \C(X)$ and $V(W) \cap V(X) \neq \emptyset$}.
\end{cases}
\end{equation}
Therefore, if $V(W) \cap V(X) = \emptyset$ then $\pi_W$ adds the components of the hypergraph $W$ to $X$, if $\C(W) \subseteq \C(X)$ then $\pi_W$ deletes the components of the hypergraph $W$ from $X$, otherwise $\pi_W$ fixes $X$.
\end{definition}

\begin{proposition}\label{prop:HadT}
Suppose $W \in \sX^{\ast}$, $\X \subseteq \sX$ with $\N \in \X$, and $\pi_W \colon \X \to \sX$ is the hypergraph addition/deletion partial transformation. Define
\begin{equation} \label{prop:HadT2}
\SC := \{ \N, W \},
\end{equation}
and for $X \in \X$ define
\begin{equation} \label{prop:HadT3}
\D_X :=
\begin{cases}
\{\N, W\} & \text{if $\C(W) \subseteq \C(X)$},\\
\{\N\} & \text{if $\C(W) \nsubseteq \C(X)$}.
\end{cases}
\end{equation}
Then $\T_W := (\X,\pi_W,\SC)$ is a hypergraph transformation on $\sX$ with $\SC$-maximal subsets $\{\D_X\}_{X \in \X}$, and the collection of distinguished hypergraphs $\SC$ is greatest for $\pi_W$ with respect to inclusion.
\end{proposition}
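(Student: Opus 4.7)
The plan is to mirror the structure of the proof of Proposition~\ref{prop:HEpt}: verify in turn the three defining conditions of Definition~\ref{def:Ht} for the data $(\X, \pi_W, \SC)$ with the $\SC$-maximal subsets $\{\D_X\}_{X\in\X}$ given in~\eqref{prop:HadT3}, and then establish the greatest property from the nonredundancy condition alone. Nonredundancy is immediate from the two branches of Definition~\ref{def:HadT}: at $S = \N$ the ``disjoint'' branch applies, since $V(\N) = \emptyset$, yielding $\pi_W(\N) = \bigoplus(\emptyset \boxplus \C(W)) = W \ne \N$, with no components in common with $\N$; at $S = W$ the ``subset'' branch applies, yielding $\pi_W(W) = \bigoplus(\C(W) \boxplus \C(W)) = \N$, with no components in common with $W$. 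Component maximality is then a direct check of Properties~\ref{def:CompMax1}--\ref{def:CompMax4} of Definition~\ref{def:CompMax}: pairwise component disjointness of $\{\N,W\}$ is trivial because $\C(\N) = \emptyset$, Properties~\ref{def:CompMax2} and \ref{def:CompMax3} are built into~\eqref{prop:HadT3}, and Property~\ref{def:CompMax4} reduces to checking the two cases $S \in \{\N, W\}$.

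The step I expect to be the most delicate is preservation of the direct sum decomposition, which I would handle by a case split on $X \in \X$ according to whether $\C(W) \subseteq \C(X)$, whether $V(W) \cap V(X) = \emptyset$, or neither. The key observation is that $V\big(\pi_W(\N)\big) = V(W)$, so $\N \in \SC_X$ exactly when $V(W) \cap V(X) = \emptyset$, while $\pi_W(W) = \N$ automatically places $W$ in $\SC_X$ whenever $W \in \D_X$. This gives $\SC_X = \{W\}$, $\{\N\}$, or $\emptyset$ in the three cases, and the corresponding decompositions $\widebar{X} \oplus \bigoplus_{S \in \SC_X} \pi_W(S)$ collapse to $X \ominus W$, $X \oplus W$, and $X$, respectively, which match the three values of $\pi_W(X)$ dictated by Definition~\ref{def:HadT}. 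The remaining requirements that $\pi_W(\SC_X)$ be pairwise vertex disjoint and that $\lvert \pi_W(\SC_X) \rvert = \lvert \SC_X \rvert$ are trivial because $\lvert \SC_X \rvert \le 1$ throughout.

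For the greatest property, I would argue by contradiction: suppose $T \in \X$ is such that $(\X, \pi_W, \SC \cup \{T\})$ is a hypergraph transformation, and show $T \in \{\N, W\}$. If $T \ne \N$ then nonredundancy forces $\pi_W(T) \ne T$, so by Definition~\ref{def:HadT} either $V(W) \cap V(T) = \emptyset$ or $\C(W) \subseteq \C(T)$. In the first case $\pi_W(T) = T \oplus W$, so $\C(T) \subseteq \C\big(\pi_W(T)\big)$, and nonredundancy then forces $\C(T) = \emptyset$, contradicting $T \ne \N$. In the second case $\pi_W(T) = \bigoplus\big(\C(T) \setminus \C(W)\big)$, and nonredundancy gives $\C(T) \setminus \C(W) = \emptyset$, hence $\C(T) = \C(W)$, so $T = W$. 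Either way $T \in \SC$, completing the argument.
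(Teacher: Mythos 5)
Your proposal is correct and follows essentially the same route as the paper's own proof: the same nonredundancy checks for $\N$ and $W$, the same three-way case split ($\C(W) \subseteq \C(X)$; $V(W) \cap V(X) = \emptyset$; neither) yielding $\SC_X = \{W\}$, $\{\N\}$, $\emptyset$ and the decompositions $X \ominus W$, $X \oplus W$, $X$, and the same nonredundancy-based argument that any additional distinguished hypergraph $T$ must equal $W$ or $\N$. The only difference is cosmetic: you split on $T = \N$ versus $T \ne \N$ before examining the two branches of $\pi_W$, whereas the paper examines the two branches directly.
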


\begin{proof}
For nonredundancy, let $S \in \SC$. If $S = \N$ then $\C(S) = \emptyset$, hence $\C(S) \cap \C\big(\pi_W (S)\big) = \emptyset$. If $S = W$ then $\pi_W (S) = \bigoplus \C(S) \boxplus \C(W) = \bigoplus \C(W) \boxplus \C(W) = \bigoplus \emptyset = \N$, so $\C(S) \cap \C\big(\pi_W (S)\big) = \emptyset$. Further, $\N \in \SC$ and $\pi_W (\N) = \bigoplus \C(\N) \boxplus \C(W) = \bigoplus \C(W) = W \ne \N$.

The set $\SC$ is component maximal since Conditions~\ref{def:CompMax1} to \ref{def:CompMax4} of Definition~\ref{def:CompMax} follow immediately from the definition of the subsets $\{\D_X\}_{X \in \X}$.

To show the direct sum decomposition is preserved, let $X \in \X$. Then $\SC_X$ takes one of the following forms: if $\C(W) \nsubseteq \C(X)$ and $V(W) \cap V(X) \neq \emptyset$ then $\D_X = \{\N\}$ and $\SC_X = \emptyset$; if $\C(W) \nsubseteq \C(X)$ and $V(W) \cap V(X) = \emptyset$ then $\D_X = \{\N\}$ and $\SC_X = \{\N\}$; and, if $\C(W) \subseteq \C(X)$ then $\D_X = \{ \N, W \}$, and also $V(W) \cap V(X) \neq \emptyset$, so $\SC_X = \{W\}$. Then $\lvert \SC_X \rvert = \lvert \pi_W(\SC_X) \rvert \le 1$, and it also holds trivially that $\pi_W(\SC_X)$ is pairwise vertex disjoint. We consider three cases. First, if $\C(W) \nsubseteq \C(X)$ and $V(W) \cap V(X) \neq \emptyset$ then $\SC_X = \emptyset$ and $\pi_W (X) = X = \widebar{X} = \widebar{X} \oplus \big(\bigoplus_{S \in \SC_X} \pi_W (S)\big)$ where $\widebar{X} := X \ominus \big(\bigoplus_{S \in \SC_X} S\big)$. Second, if $\C(W) \nsubseteq \C(X)$ and $V(W) \cap V(X) = \emptyset$ then $\SC_X = \{\N\}$ and $\pi_W (X) = \bigoplus \C(X) \boxplus \C(W) = \bigoplus \C(X) \oplus \bigoplus \C(W) = X \oplus W = (X \ominus \N) \oplus \pi_W (\N) = \big(X \ominus (\bigoplus_{S \in \SC_X} S)\big) \oplus \big(\bigoplus_{S \in \SC_X} \pi_W (S)\big) = \widebar{X} \oplus \big(\bigoplus_{S \in \SC_X} \pi_W (S)\big)$, where $\widebar{X} := X \ominus \big(\bigoplus_{S \in \SC_X} S\big)$. Third, if $\C(W) \subseteq \C(X)$ then $\SC_X = \{W\}$ and $\pi_W (X) = \bigoplus \C(X) \boxplus \C(W) = \bigoplus \C(X) \ominus \bigoplus \C(W) = X \ominus W = \big(X \ominus W\big) \oplus \N = \big(X \ominus W\big) \oplus \pi_W (W) = \big(X \ominus (\bigoplus_{S \in \SC_X} S)\big) \oplus \big(\bigoplus_{S \in \SC_X} \pi_W (S)\big) = \widebar{X} \oplus \big(\bigoplus_{S \in \SC_X} \pi_W (S)\big)$, where $\widebar{X} := \big(X \ominus (\bigoplus_{S \in \SC_X} S)\big)$.

Finally we show that $\SC$ is greatest for $\pi_W$, so let $T \in \X$ be such that $\big(\X,\pi_W,\SC \cup \{T\}\big)$ is a hypergraph transformation. Note that either $\C(W) \subseteq \C(T)$ or $V(W) \cap V(T) = \emptyset$, otherwise $\pi_W (T) = T$ which contradicts nonredundancy. First, suppose $\C(W) \subseteq \C(T)$. If $T \ne W$ then there exists $C \in \C(T) \setminus \C(W)$, and since $\pi_W (T) = \bigoplus \C(T) \boxplus \C(W) = \bigoplus \C(T) \setminus \C(W)$ we have $C \in \C(T) \cap \C(\pi_W(T))$, contradicting nonredundancy, hence $T = W \in \SC$. Second, suppose $V(W) \cap V(T) = \emptyset$. Then $\C\big(\pi_W (T)\big) = \C\big(\bigoplus \C(T) \boxplus \C(W)\big) = \C(T) \cup \C(W)$, and by nonredundancy we have $\emptyset = \C(T) \cap \C\big(\pi_W (T)\big) = \C(T)$, therefore $T = \N \in \SC$.
\end{proof}

\begin{definition}[\textbf{Hypergraph addition partial transformation}]\label{def:HaT}
Let $W \in \sX^{\ast}$, and $\X \subseteq \sX$ with $\N \in \X$. We define the \emph{hypergraph addition partial transformation} $\pi^+_W \colon \X \to \sX$ such that, for $X \in \X$,
\begin{equation}
\pi^+_W (X) =
\begin{cases}
X \oplus W & \text{if $V(W) \cap V(X) = \emptyset$},\\
X & \text{if $V(W) \cap V(X) \neq \emptyset$}.
\end{cases}
\end{equation}
Therefore, if $V(W) \cap V(X) = \emptyset$ then $\pi^+_W$ adds the components of the hypergraph $W$ to $X$, otherwise $\pi^+_W$ fixes $X$.
\end{definition}

\begin{proposition}\label{prop:HaT}
Suppose $W \in \sX^{\ast}$, $\X \subseteq \sX$ with $\N \in \X$, and $\pi^+_W \colon \X \to \sX$ is the hypergraph addition partial transformation. Define
\begin{equation}
\SC^+ := \{\N\}, \label{prop:HaT2}
\end{equation}
and for $X \in \X$ define
\begin{equation}
\D^+_X := \{\N\}. \label{prop:HaT3}
\end{equation}
Then $\T^+_W := (\X,\pi^+_W,\SC^+)$ is a hypergraph transformation on $\sX$ with $\SC^+$-maximal subsets $\{\D^+_X\}_{X \in \X}$.
\end{proposition}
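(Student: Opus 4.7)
The plan is to verify in turn the three defining conditions of a hypergraph transformation given in Definition~\ref{def:Ht}, since the proposition is essentially a degenerate instance of the setup of Proposition~\ref{prop:HadT} where the deletion part of the action has been removed. Because $\SC^+ = \{\N\}$ contains only the null hypergraph, each condition reduces to a short case analysis.

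First, for nonredundancy, the only $S \in \SC^+$ is $S = \N$, and $\C(\N) = \emptyset$ gives $\C(\N) \cap \C\big(\pi^+_W(\N)\big) = \emptyset$ trivially. Moreover, $W \in \sX^{\ast}$ and $V(W) \cap V(\N) = \emptyset$, so $\pi^+_W(\N) = \N \oplus W = W \ne \N$. Second, for maximality, I would check the four conditions of Definition~\ref{def:CompMax} against the subsets $\D^+_X = \{\N\}$: pairwise component disjointness holds trivially because $\D^+_X$ is a singleton; $\N \in \D^+_X$ by definition; $\C(\N) = \emptyset \subseteq \C(X)$; and if $S \in \SC^+$ with $\C(S) \subseteq \C(X)$, then $S = \N$ and we may take $T = \N \in \D^+_X$ to satisfy $\C(S) \subseteq \C(T)$.

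For the direct sum decomposition, I would compute $\SC^+_X$ for arbitrary $X \in \X$. Since $\D^+_X = \{\N\}$, the only candidate is $S = \N$, for which $V\big(\pi^+_W(\N)\big) = V(W)$ and $V(X \ominus \N) = V(X)$. Thus $\SC^+_X = \{\N\}$ if $V(W) \cap V(X) = \emptyset$, and $\SC^+_X = \emptyset$ otherwise. In the first case $\pi^+_W(\SC^+_X) = \{W\}$ is trivially pairwise vertex disjoint with $\lvert \pi^+_W(\SC^+_X) \rvert = \lvert \SC^+_X \rvert = 1$, and setting $\widebar{X} := X \ominus \N = X$ yields $\widebar{X} \oplus \pi^+_W(\N) = X \oplus W = \pi^+_W(X)$. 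In the second case $\pi^+_W(\SC^+_X) = \emptyset$ is vacuously pairwise vertex disjoint with $\lvert \pi^+_W(\SC^+_X) \rvert = \lvert \SC^+_X \rvert = 0$, and $\widebar{X} = X$ gives $\widebar{X} \oplus \bigoplus_{S \in \SC^+_X} \pi^+_W(S) = X = \pi^+_W(X)$.

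There is no substantive obstacle here; the entire argument amounts to unpacking the definitions and tracking the trivial direct sum decomposition arising from the single distinguished hypergraph $\N$. The only mild subtlety is recognising that the decomposition $X = \widebar{X} \oplus \big(\bigoplus_{S \in \SC^+_X} S\big)$ degenerates to $X = X$ in both cases, so the action of $\pi^+_W$ is entirely captured by the summand $\pi^+_W(\N) = W$ when it is present.
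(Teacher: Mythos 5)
Your proof is correct and follows exactly the route the paper intends: the paper omits this proof with a pointer to Proposition~\ref{prop:HadT}, and your argument is precisely the specialisation of that proof to the case $\SC^+ = \{\N\}$, checking nonredundancy via $\C(\N) = \emptyset$ and $\pi^+_W(\N) = W \ne \N$, component maximality trivially, and the direct sum decomposition by the case split on whether $V(W) \cap V(X) = \emptyset$ (giving $\SC^+_X = \{\N\}$) or not (giving $\SC^+_X = \emptyset$). All the computations, including $X \ominus \N = X$ and the empty direct sum being $\N$, are handled correctly.
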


\begin{proof}
We omit the proof as it is similar to the proof of Proposition~\ref{prop:HadT}.
\end{proof}
%
%
%

\begin{remark}
Suppose $W \in \sX^{\ast}$, and $\X \subseteq \sX$ with $\N \in \X$. Then $\SC^+$ in Equation~\eqref{prop:HaT2} of Proposition~\ref{prop:HaT} is not upward closed with respect to $\SC$ in Equation~\eqref{prop:HadT2} of Proposition~\ref{prop:HadT}, since $\N \in \SC^+$, $W \in \SC$, and $\C(\N) \subseteq \C(W)$, however $W \notin \SC^+$. In fact, if $\SC^+$ was upward closed with respect to $\SC$ then Part~\ref{prop:UpCl2} of Proposition~\ref{prop:UpCl} would imply that $\SC^+ = \SC$. In particular, $\T^+_W$ is not a support reduction of $\T_W$.
\end{remark}

\subsubsection{Combined hypergraph--hyperedge addition hypergraph transformations}\label{subsubsec:Transformations_Ex_HGHE}
Here we provide an example of a hypergraph transformation that performs a hypergraph addition followed by the addition of hyperedges.

\begin{definition}[\textbf{Hypergraph--hyperedge addition partial transformation}]\label{def:HGHEaddT}
Suppose $H \subseteq E(\sX)$ is nonempty, $W \in \sX^{\ast}$, and $\X \subseteq \sX$ with $V(\X) \cap V(W) = \emptyset$. We define the \emph{hypergraph--hyperedge addition partial transformation} $\pi^{++}_{W,H} \colon \X \to \sX$ such that, for $X \in \X$,
\begin{equation}
\pi^{++}_{W,H} (X) =
\begin{cases}
(X \oplus W) \boxplus H & \text{if $\bigcup H \subseteq V(X \oplus W)$ and $H \cap E(X \oplus W) = \emptyset$},\\
X & \text{if $\bigcup H \nsubseteq V(X \oplus W)$ or $H \cap E(X \oplus W) \ne \emptyset$}.
\end{cases}
\end{equation}
Therefore, if $\bigcup H \subseteq V(X \oplus W)$ and $H \cap E(X \oplus W) = \emptyset$ then $\pi^{++}_{W,H}$ adds the components of the hypergraph $W$ to $X$ and then adds all of the hyperedges in $H$ to $X \oplus W$, otherwise $\pi^{++}_{W,H}$ fixes $X$.
\end{definition}

\begin{definition}[\textbf{$W\text{--}H$-closed for addition set of hypergraphs}]
Suppose $H \subseteq E(\sX)$ is nonempty, $W \in \sX^{\ast}$, and $\X \subseteq \sX$ with $V(\X) \cap V(W) = \emptyset$. We say that $\X$ is \emph{$W\text{--}H$-closed for addition} if $X \in \X$, $\bigcup H \subseteq V(X \oplus W)$, $H \cap E(X \oplus W) = \emptyset$, and $X \wedge H \ne \N$ imply $X \wedge H \in \X$.
\end{definition}

\begin{proposition}\label{prop:HGHEtadd}
Suppose $H \subseteq E(\sX)$ is nonempty, $W \in \sX^{\ast}$, $\X \subseteq \sX$ with $V(\X) \cap V(W) = \emptyset$ is $W\text{--}H$-closed for addition, and $\pi^{++}_{W,H} \colon \X \to \sX$ is the hypergraph--hyperedge addition partial transformation. Define
\begin{equation}
\SC^{++} := \{\, X \wedge H \mid \text{$X \in \X$, $\textstyle\bigcup H \subseteq V(X \oplus W)$, $H \cap E(X \oplus W) = \emptyset$, and $X \wedge H \ne \N$} \,\}, \label{prop:HGHEtadd1}
\end{equation}
and for $X \in \X$ define
\begin{equation} \label{prop:HGHEtadd2}
\D^{++}_X :=
\begin{cases}
\{X \wedge H\} & \text{if $\bigcup H \subseteq V(X \oplus W)$, $H \cap E(X \oplus W) = \emptyset$, and $X \wedge H \ne \N$},\\
\emptyset & \text{if $\bigcup H \nsubseteq V(X \oplus W)$ or $H \cap E(X \oplus W) \ne \emptyset$ or $X \wedge H = \N$}.
\end{cases}
\end{equation}
Then $\T^{++}_{W,H} := (\X,\pi^{++}_{W,H},\SC^{++})$ is a hypergraph transformation on $\sX$ with $\SC^{++}$-maximal subsets $\{\D^{++}_X\}_{X \in \X}$.
\end{proposition}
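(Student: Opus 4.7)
The plan is to verify the three conditions of Definition~\ref{def:Ht} for $\T^{++}_{W,H}$, paralleling the structure used in the proof of Proposition~\ref{prop:HEpt}. Throughout I will exploit the assumption $V(\X) \cap V(W) = \emptyset$, which ensures that $W$ shares no vertices with any hypergraph in $\X$; in particular $V(S) \cap V(W) = \emptyset$ for all $S \in \SC^{++} \subseteq \X$, and analogously $V(X) \cap V(W) = \emptyset$ for $X \in \X$.

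For nonredundancy, first note that $\N \notin \SC^{++}$ by construction. Given $S \in \SC^{++}$, I would first verify that $\pi^{++}_{W,H}(S) = (S \oplus W) \boxplus H$ by checking $\bigcup H \subseteq V(S \oplus W)$ and $H \cap E(S \oplus W) = \emptyset$. Writing $S = Y \wedge H$ for the witness $Y \in \X$ from the definition of $\SC^{++}$, the inclusion $E(S) \subseteq E(Y)$ together with $H \cap E(Y \oplus W) = \emptyset$ yields the edge-disjointness, and any vertex $v \in \bigcup H$ either lies in $V(W)$ or lies in some component $D \in \C(Y)$ touched by a hyperedge of $H$, forcing $D \in \C(Y \wedge H) = \C(S)$ and hence $v \in V(S)$. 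Each component $C \in \C(S)$ is then, by definition of $S = Y \wedge H$, touched by some $e \in H$, so in $(S \oplus W) \boxplus H$ the component $C$ merges with at least one other component of $S \oplus W$, and thus $C \notin \C\big(\pi^{++}_{W,H}(S)\big)$.

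For component maximality, Properties~\ref{def:CompMax1}, \ref{def:CompMax2}, and \ref{def:CompMax3} in Definition~\ref{def:CompMax} are immediate from the definition of $\D^{++}_X$ since $\D^{++}_X$ has at most one element and $\N \notin \SC^{++}$. The main obstacle will be Property~\ref{def:CompMax4}: given $S \in \SC^{++}$ with $\C(S) \subseteq \C(X)$, I need to establish that $\D^{++}_X = \{X \wedge H\}$ and $\C(S) \subseteq \C(X \wedge H)$. Writing $S = Y \wedge H$ for a witness $Y \in \X$, the inclusion $\C(S) \subseteq \C(X \wedge H)$ is immediate because each $C \in \C(S)$ is a component of $X$ (by hypothesis) touched by some hyperedge of $H$. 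For $\bigcup H \subseteq V(X \oplus W)$, any vertex of $\bigcup H$ outside $V(W)$ lies in a component $D \in \C(Y)$ touched by $H$, hence $D \in \C(S) \subseteq \C(X)$. For $H \cap E(X \oplus W) = \emptyset$, I would argue by contradiction: assuming $e \in H \cap E(X)$, the edge $e$ lies entirely in some $C' \in \C(X)$; since $V(e) \subseteq V(X)$ and $V(\X) \cap V(W) = \emptyset$, any $v \in V(e)$ belongs to some component $D \in \C(Y)$ touched by $H$ with $D \in \C(S) \subseteq \C(X)$, and since distinct components of $X$ are vertex-disjoint, $C' = D$, yielding $e \in E(D) \subseteq E(Y)$ and contradicting $H \cap E(Y \oplus W) = \emptyset$. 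The condition $X \wedge H \ne \N$ then follows because $\C(S) \subseteq \C(X \wedge H)$ and $S \ne \N$.

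For preservation of the direct sum decomposition, the case $\D^{++}_X = \emptyset$ gives $\SC^{++}_X = \emptyset$ and $\pi^{++}_{W,H}(X) = X = \widebar{X}$ trivially. In the case $\D^{++}_X = \{S\}$ with $S = X \wedge H$, the identity $V\big(\pi^{++}_{W,H}(S)\big) = V(S) \cup V(W)$ is disjoint from $V(X \ominus S) = V(X) \setminus V(S)$ because $V(W) \cap V(X) = \emptyset$, so $S \in \SC^{++}_X$ and the pairwise vertex disjointness and cardinality conditions hold trivially. Using $X = (X \ominus S) \oplus S$, the fact that $H$ does not touch any component of $X \ominus S$ by the definition of $X \wedge H$, and $H \cap E(X \oplus W) = \emptyset$, I obtain $(X \oplus W) \boxplus H = (X \ominus S) \oplus \big((S \oplus W) \boxplus H\big) = \widebar{X} \oplus \pi^{++}_{W,H}(S)$, giving the required decomposition. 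I expect Property~\ref{def:CompMax4} to be the main technical challenge: the witnesses $Y$ (for $S = Y \wedge H$) and $X$ (for $\C(S) \subseteq \C(X)$) may differ, so propagating the edge-disjointness condition from $E(Y \oplus W)$ to $E(X \oplus W)$ requires threading carefully through the component structure.
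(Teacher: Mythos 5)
Your proof is correct and follows essentially the same route as the paper's: nonredundancy, component maximality, and preservation of the direct sum decomposition are verified in the same way, with your component-level contradiction argument for transferring $H \cap E(X \oplus W) = \emptyset$ from the witness to the new hypergraph playing the role of the paper's identity $(S \oplus W) \wedge H = (Y \oplus W) \wedge H$. The only imprecision is the claim that a hyperedge $e \in H$ touching a component $C$ forces $C$ to \emph{merge} with another component of $S \oplus W$ --- $e$ may lie entirely inside $V(C)$ --- but the conclusion $C \notin \C\big(\pi^{++}_{W,H}(S)\big)$ still holds, since the component containing $V(C)$ gains the new hyperedge $e \notin E(C)$ and so $C$ is no longer maximal.
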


\begin{proof}
For nonredundancy, suppose $S := X \wedge H \in \SC^{++}$ for some $X \in \X$. Then $\textstyle\bigcup H \subseteq V(X \oplus W)$ implies $\textstyle\bigcup H \subseteq V(S \oplus W)$, and $H \cap E(X \oplus W) = \emptyset$ implies $H \cap E(S \oplus W) = \emptyset$. So $\pi^{++}_{W,H}(S) = (S \oplus W) \boxplus H$ modifies all components of $S$, since $S = S \wedge H$ and $S \ne \N$, and it follows that $\C(S) \cap \C\big(\pi^{++}_{W,H}(S)\big) = \emptyset$, noting $V(W) \cap V(S) = \emptyset$. Additionally, $\N \notin \SC^{++}$.

To see that $\SC^{++}$ is component maximal, let $X \in \X$. Conditions~\ref{def:CompMax1} to \ref{def:CompMax3} of Definition~\ref{def:CompMax} follow immediately from the definition of $\D^{++}_X$. For Condition~\ref{def:CompMax4} of Definition~\ref{def:CompMax}, suppose $S := X \wedge H \in \SC^{++}$ for some $X \in \X$ and $\C(S) \subseteq \C(Y)$ for some $Y \in \X$. Then: $\C(S) \subseteq \C(Y \wedge H)$; $S \ne \N$ implies $Y \wedge H \ne \N$; $\bigcup H \subseteq V(X \oplus W)$ implies $\bigcup H \subseteq V(S \oplus W)$, hence $\bigcup H \subseteq V(Y \oplus W)$; $\C(S \oplus W) \subseteq \C(Y \oplus W)$ and $\bigcup H \subseteq V(S \oplus W) \subseteq V(Y \oplus W)$ imply $(S \oplus W) \wedge H = (Y \oplus W) \wedge H$, and since $H \cap E(S \oplus W) = H \cap E(X \oplus W) = \emptyset$ we have $H \cap E(Y \oplus W) = \emptyset$. So $\D^{++}_Y = \{Y \wedge H\}$ is the required $\SC^{++}$-maximal subset. Note that if $\C(S) \nsubseteq \C(Y)$ for all $S \in \SC^{++}$ then, since $\X$ is $W\text{--}H$-closed for addition, $\D^{++}_Y = \emptyset$.

To show the direct sum decomposition is preserved, let $X \in \X$. Note that $\SC^{++}_X = \D^{++}_X$, so $\lvert \SC^{++}_X \rvert = \lvert \pi^{++}_{W,H}(\SC^{++}_X) \rvert \le 1$, and it also holds trivially that $\pi^{++}_{W,H}(\SC^{++}_X)$ is pairwise vertex disjoint. Now, note that $\SC^{++}_X \ne \emptyset$ if and only if $\D^{++}_X \ne \emptyset$ if and only if $\bigcup H \subseteq V(X \oplus W)$, $H \cap E(X \oplus W) = \emptyset$, and $X \wedge H \ne \N$. So, if $\SC^{++}_X = \{S\}$ then $\pi^{++}_{W,H}(X) = (X \oplus W) \boxplus H = (X \ominus S) \oplus \big((S \oplus W) \boxplus H\big) = \widebar{X} \oplus \pi^{++}_{W,H} (S)$ where $\widebar{X} = X \ominus S$; and if $\SC^{++}_X = \emptyset$ then $\pi^{++}_{W,H} (X) = X = \widebar{X}$. In any case we have $\pi^{++}_{W,H} (X) = \widebar{X} \oplus \big(\bigoplus_{S \in \SC^{++}_X} \pi^{++}_{W,H}(S)\big)$, where $\widebar{X} := X \ominus \big(\bigoplus_{S \in \SC^{++}_X} S\big)$. We conclude that $\T^{++}_{W,H}$ is a hypergraph transformation on $\sX$ with $\SC^{++}$-maximal subsets $\{\D^{++}_X\}_{X \in \X}$.
\end{proof}

\section{Quotient hypergraph transformations}\label{sec:Quotients}

\subsection{Definition and basic properties}\label{subsec:Quotients_def}
Given a hypergraph transformation $\T$ on $\sX$ and an equivalence relation $R_{V(\sX)}$ on the set of vertices $V(\sX)$, we consider the notion of a corresponding \emph{quotient hypergraph transformation} of $\T$ on the hypergraph family $\sX/R_{V(\sX)}$. The existence of the quotient of a hypergraph transformation depends on the particular equivalence relation $R_{V(\sX)}$, since hypergraphs in $\sX$ associated with $\T$ must project into $\sX/R_{V(\sX)}$ appropriately.

\begin{definition}[\textbf{Amenable hypergraph transformation, quotient hypergraph transformation}]\label{def:AmenQuotHT}
Let $\T := (\X,\pi,\SC)$ be a hypergraph transformation on $\sX$ with $\SC$-maximal subsets $\{\D_X\}_{X \in \X}$, and let $R_{V(\sX)}$ be an equivalence relation on $V(\sX)$. Define:
\begin{enumerate}[label={(\arabic*)}]
\item $\SC/R_{V(\sX)} := \{\, S//R_{V(\sX)} \mid S \in \SC \,\}$. \label{def:AmenQuotHT1}
\item $\D_X/R_{V(\sX)} := \{\, S//R_{V(\sX)} \mid S \in \D_X \,\}$ for $X//R_{V(\sX)} \in \X/R_{V(\sX)}$. \label{def:AmenQuotHT2}
\item $\pi/R_{V(\sX)} \colon \X/R_{V(\sX)} \to \sX/R_{V(\sX)}$ such that $\pi/R_{V(\sX)} \big(X//R_{V(\sX)}\big) := \pi(X)//R_{V(\sX)}$ for all $X//R_{V(\sX)} \in \X/R_{V(\sX)}$. \label{def:AmenQuotHT3}
\end{enumerate}
If $\T/R_{V(\sX)} := (\X/R_{V(\sX)},\pi/R_{V(\sX)},\SC/R_{V(\sX)})$ is a hypergraph transformation on the hypergraph family $\sX/R_{V(\sX)}$ then $\T$ is \emph{amenable} with respect to $R_{V(\sX)}$, and in this case $\T/R_{V(\sX)}$ is the \emph{quotient hypergraph transformation} with respect to $R_{V(\sX)}$.
\end{definition}

\begin{remark} For $\T/R_{V(\sX)}$ to be a hypergraph transformation it is necessary that the following hold: the map $\pi/R_{V(\sX)}$ is well defined, that is $X//R_{V(\sX)} = Y//R_{V(\sX)}$ implies $\pi(X)//R_{V(\sX)} = \pi(Y)//R_{V(\sX)}$ for all $X$, $Y \in \X$; the subsets $\D_X/R_{V(\sX)}$ are well defined, that is $X//R_{V(\sX)} = Y//R_{V(\sX)}$ implies $\D_X/R_{V(\sX)} = \D_Y/R_{V(\sX)}$ for all $X$, $Y \in \X$; and, $\T/R_{V(\sX)}$ satisfies Conditions~\ref{Ht1}--\ref{Ht3} in Definition~\ref{def:Ht}.
\end{remark}

The following proposition demonstrates that commutativity of the partial transformations underlying a sequence of amenable hypergraph transformations induces commutativity of the partial transformations underlying the corresponding sequence of quotient hypergraph transformations.

%
\begin{proposition}\label{Proposition:CommQuotHT}
Suppose $\big(\T_i := (\X_i,\pi_i,\SC^i)\big)_{i=1}^n$ is a sequence of hypergraph transformations on $\sX$, where $n \in \NN$ with $n \ge 2$, and $R_{V(\sX)}$ is an equivalence relation on $V(\sX)$ such that each hypergraph transformation in $(\T_i)_{i=1}^n$ is amenable with respect to $R_{V(\sX)}$. Then $\Coin\big((\bigcirc_{i \in \sigma} \pi_{n+1-i})_{\sigma \in S_n}\big)/R_{V(\sX)} \subseteq \Coin\big((\bigcirc_{i \in \sigma} \pi_{n+1-i}/R_{V(\sX)})_{\sigma \in S_n}\big)$.
\end{proposition}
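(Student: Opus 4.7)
The plan is to take an arbitrary $X \in \Coin\big((\bigcirc_{i \in \sigma} \pi_{n+1-i})_{\sigma \in S_n}\big)$ and show that the vertex-augmented quotient $X//R_{V(\sX)}$ lies in $\Coin\big((\bigcirc_{i \in \sigma} \pi_{n+1-i}/R_{V(\sX)})_{\sigma \in S_n}\big)$. Unpacking the definition of the coincidence set, this requires two separate verifications: that $X//R_{V(\sX)}$ belongs to the common domain $\bigcap_{\sigma \in S_n} \Dom\big(\bigcirc_{i \in \sigma} \pi_{n+1-i}/R_{V(\sX)}\big)$, and that every such composition produces the same image at $X//R_{V(\sX)}$. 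Both tasks reduce to iterating the defining identity of the quotient transformation, which from Definition~\ref{def:AmenQuotHT}\ref{def:AmenQuotHT3} reads $\pi_i/R_{V(\sX)}\big(Y//R_{V(\sX)}\big) = \pi_i(Y)//R_{V(\sX)}$ for every $Y \in \X_i$.

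First I would fix $\sigma \in S_n$ and proceed by induction on the length of a partial composition along $\sigma$. The base case is immediate: $X \in \X_{n+1-\sigma(1)}$ implies $X//R_{V(\sX)} \in \X_{n+1-\sigma(1)}/R_{V(\sX)} = \Dom\big(\pi_{n+1-\sigma(1)}/R_{V(\sX)}\big)$, with the value $\pi_{n+1-\sigma(1)}(X)//R_{V(\sX)}$. For the inductive step, suppose after $k$ steps the partial composition $\pi_{n+1-\sigma(k)} \circ \cdots \circ \pi_{n+1-\sigma(1)}$ produces a hypergraph $Z \in \X_{n+1-\sigma(k+1)}$ (which is guaranteed because $X$ lies in $\Dom(\bigcirc_{i \in \sigma} \pi_{n+1-i})$), and the corresponding partial quotient composition produces $Z//R_{V(\sX)}$; amenability of $\T_{n+1-\sigma(k+1)}$ then yields $Z//R_{V(\sX)} \in \Dom\big(\pi_{n+1-\sigma(k+1)}/R_{V(\sX)}\big)$ with value $\pi_{n+1-\sigma(k+1)}(Z)//R_{V(\sX)}$. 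Iterating through all $n$ steps simultaneously delivers the membership $X//R_{V(\sX)} \in \Dom\big(\bigcirc_{i \in \sigma} \pi_{n+1-i}/R_{V(\sX)}\big)$ and the closed-form identity
\[
\big(\bigcirc_{i \in \sigma} \pi_{n+1-i}/R_{V(\sX)}\big)\big(X//R_{V(\sX)}\big) \;=\; \big(\bigcirc_{i \in \sigma} \pi_{n+1-i}\big)(X)\,//\,R_{V(\sX)}.
\]

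Since $X$ lies in $\Coin\big((\bigcirc_{i \in \sigma} \pi_{n+1-i})_{\sigma \in S_n}\big)$, the right-hand side of this displayed identity is independent of $\sigma$, and therefore so is the left-hand side. Combined with the domain verification above, this is precisely the definition of $X//R_{V(\sX)}$ being in $\Coin\big((\bigcirc_{i \in \sigma} \pi_{n+1-i}/R_{V(\sX)})_{\sigma \in S_n}\big)$, completing the proof.

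The main obstacle is purely bookkeeping: one has to iterate the amenability identity along an arbitrary permutation $\sigma$ while keeping track of nested domain conditions, which is most cleanly handled by the simultaneous induction sketched above. Notably, no deeper structural property of the transformations is required; in particular, neither disjointness nor Proposition~\ref{prop:DisTransComm} enters the argument, since coincidence on $\sX$ is transferred to $\sX/R_{V(\sX)}$ purely through the functorial identity supplied by Definition~\ref{def:AmenQuotHT}\ref{def:AmenQuotHT3}.
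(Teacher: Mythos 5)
Your proposal is correct and follows essentially the same route as the paper's proof: both hinge on the identity $\bigl(\bigcirc_{i \in \sigma} \pi_{n+1-i}/R_{V(\sX)}\bigr)\bigl(X//R_{V(\sX)}\bigr) = \bigl(\bigcirc_{i \in \sigma} \pi_{n+1-i}\bigr)(X)//R_{V(\sX)}$, obtained from Definition~\ref{def:AmenQuotHT}\ref{def:AmenQuotHT3} together with $\Dom(\pi_j/R_{V(\sX)}) = \Dom(\pi_j)/R_{V(\sX)}$, and then transfer the $\sigma$-independence of the right-hand side from the hypothesis $X \in \Coin\bigl((\bigcirc_{i \in \sigma} \pi_{n+1-i})_{\sigma \in S_n}\bigr)$. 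The only difference is that you spell out the iteration of the amenability identity as an explicit induction, where the paper asserts it directly.
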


\begin{proof}
Let $X//R_{V(\sX)} \in \Coin\big((\bigcirc_{i \in \sigma} \pi_{n+1-i})_{\sigma \in S_n}\big)/R_{V(\sX)}$ with $X \in \Coin\big((\bigcirc_{i \in \sigma} \pi_{n+1-i})_{\sigma \in S_n}\big)$, and let $\sigma$, $\sigma' \in S_n$. Then
\begin{align*}
\bigcirc_{i \in \sigma} \pi_{n+1-i}/R_{V(\sX)} (X//R_{V(\sX)}) &= \bigcirc_{i \in \sigma} \pi_{n+1-i} (X) //R_{V(\sX)} = \bigcirc_{i \in \sigma'} \pi_{n+1-i} (X) //R_{V(\sX)}\\ &= \bigcirc_{i \in \sigma'} \pi_{n+1-i}/R_{V(\sX)} (X//R_{V(\sX)}),
\end{align*}
where the first and third equalities follow from the definition of a quotient hypergraph transformation and from $\Dom(\pi_j/R_{V(\sX)}) = \Dom(\pi_j)/R_{V(\sX)}$ for all $j \in [n]$, and the second equality follows from the commutativity of the partial transformations $(\pi_j)_{j=1}^n$ on $\Coin\big((\bigcirc_{i \in \sigma} \pi_{n+1-i})_{\sigma \in S_n}\big)$. Therefore, $X//R_{V(\sX)} \in \Coin\big((\bigcirc_{i \in \sigma} \pi_{n+1-i}/R_{V(\sX)})_{\sigma \in S_n}\big)$.
\end{proof}

In Subsection~\ref{subsec:QuotTransformations_Ex} we discuss two examples of amenable hypergraph transformations. Amenability of a given hypergraph transformation may be realised if the equivalence relation on $V(\sX)$ satisfies certain properties. Two examples are $\SC$-preserving equivalence relations and $W$-disjointness preserving equivalence relations:

\begin{definition}[\textbf{$\SC$-preserving, $W$-disjointness preserving}]\leavevmode \label{def:presER}
Suppose $R_{V(\sX)}$ is an equivalence relation on $V(\sX)$, and $\X \subseteq \sX$.
\begin{enumerate}[label={(\arabic*)}]
\item If $\SC \subseteq \X$ then $R_{V(\sX)}$ is \emph{$\SC$-preserving with respect to $\X$} when $\C(S) \subseteq \C(X)$ implies $\C(S//R_{V(\sX)}) \subseteq \C(X//R_{V(\sX)})$ for all $S \in \SC$ and $X \in \X$. \label{def:SpresER}
\item If $W \in \sX$ then $R_{V(\sX)}$ is \emph{$W$-disjointness preserving with respect to $\X$} when $V(W) \cap V(X) = \emptyset$ implies $V(W//R_{V(\sX)}) \cap V(X//R_{V(\sX)}) = \emptyset$ for all $X \in \X$. \label{def:DpresER}
\end{enumerate}
\end{definition}

\begin{remark}
Suppose $\T := (\X,\pi,\SC)$ is a hypergraph transformation on $\sX$ with $\SC$-maximal subsets $\{\D_X\}_{X \in \X}$. If $\T$ is amenable with respect to the equivalence relation $R_{V(\sX)}$ on $V(\sX)$ then by Property~\ref{def:AmenQuotHT2} in Definition~\ref{def:AmenQuotHT} it follows that $S \in \D_X$, whereby $\C(S) \subseteq \C(X)$, implies $\C(S//R_{V(\sX)}) \subseteq \C(X//R_{V(\sX)})$. Part~\ref{def:SpresER} of Definition~\ref{def:presER} is therefore a more general property than this observation.
\end{remark}

\subsection{Examples of quotient hypergraph transformations}\label{subsec:QuotTransformations_Ex}
Here we give two examples of amenable hypergraph transformations, involving hyperedge addition and hypergraph addition.

\subsubsection{Quotients of hyperedge addition hypergraph transformations}\label{subsubsec:QTransformations_Ex_HE}
Our example of an amenable hypergraph transformation involving hyperedge addition is based on a notion of hyperedge equivalence with respect to an equivalence relation on $V(\sX)$.

\begin{definition}[\textbf{Vertex-augmented quotient of a hyperedge}]\leavevmode \label{def:vertaugquot}
Suppose $R_{V(\sX)}$ is an equivalence relation on $V(\sX)$, and $e \in E(\sX)$. The \emph{vertex-augmented quotient of the hyperedge} $e$ with respect to $R_{V(\sX)}$ is defined by $e//R_{V(\sX)} := \big\{\, [v]_{R_{V(\sX)}} \mid v \in e \,\big\} \in E(\sX/R_{V(\sX)})$, where we assume $\lvert \big\{\, [v]_{R_{V(\sX)}} \mid v \in e \,\big\} \rvert \ge 2$ if we disallow loops.
\end{definition}

\begin{remark}\leavevmode
\begin{enumerate}[label={(\arabic*)}]
\item To see that $e//R_{V(\sX)} \in E(\sX/R_{V(\sX)})$, note that $e \in E(\sX)$ implies $e \in E(X)$ for some $X \in \sX$, so $e//R_{V(\sX)} \in E(X//R_{V(\sX)})$, noting that $\lvert e \rvert \ge 2$ and $\lvert e//R_{V(\sX)} \rvert \ge 2$ if we disallow loops, hence $e//R_{V(\sX)} \in E(\sX/R_{V(\sX)})$.
\item The notation $e//R_{V(\sX)}$ is consistent with the definition of a vertex-augmented quotient hypergraph: if $X$ is a hypergraph with $f \in E(X)$ then, assuming $\lvert f//R_{V(\sX)} \rvert \ge 2$ if we disallow loops, the quotient map sends $f \in E(X)$ to $f//R_{V(\sX)} \in E(X//R_{V(\sX)})$.
\end{enumerate}
\end{remark}

\begin{definition}[\textbf{$e$-equivalent hyperedges}]\label{def:RequivEdges}
Suppose $R_{V(\sX)}$ is an equivalence relation on $V(\sX)$, and $e \in E(\sX)$ with $e//R_{V(\sX)} \in E(\sX/R_{V(\sX)})$. Then $\widetilde{e} := \{\, f \in E(\sX) \mid f//R_{V(\sX)} = e//R_{V(\sX)} \,\}$ is the set of \emph{$e$-equivalent hyperedges} with respect to $R_{V(\sX)}$. Note that for $f \in \widetilde{e}$ we have $\widetilde{f} = \widetilde{e}$.
\end{definition}

\begin{notation} Suppose $R_{V(\sX)}$ is an equivalence relation on $V(\sX)$, $e \in E(\sX)$ with $e//R_{V(\sX)} \in E(\sX/R_{V(\sX)})$, and $X \in \sX$. We denote $\widetilde{e}_X := \{\, f \in \widetilde{e} \mid f \subseteq V(X) \,\}$.
\end{notation}

In the following proposition we characterise sets of $e$-equivalent hyperedges.

\begin{proposition}\label{prop:EquivEdges}
Suppose $R_{V(\sX)}$ is an equivalence relation on $V(\sX)$, and $e \in E(\sX)$ with $e//R_{V(\sX)} \in E(\sX/R_{V(\sX)})$. Then the following hold for all $X$, $Y \in \sX$:
\begin{enumerate}[label={(\arabic*)}]
\item $e//R_{V(\sX)} \subseteq V(X//R_{V(\sX)})$ if and only if $\widetilde{e}_X \ne \emptyset$. \label{prop:EquivEdges1}
\item $e//R_{V(\sX)} \in E(X//R_{V(\sX)})$ if and only if $\widetilde{e}_X \cap E(X) \ne \emptyset$. \label{prop:EquivEdges2}
\item If $\C(X) \subseteq \C(Y)$ then $\widetilde{e}_X \subseteq \widetilde{e}_Y$. \label{prop:EquivEdges3}
\item If $X = Y \wedge \widetilde{e}_Y$ then $\widetilde{e}_X = \widetilde{e}_Y$ and $\widetilde{e}_X \cap E(X) = \widetilde{e}_Y \cap E(Y)$. \label{prop:EquivEdges4}
\item If $V(X//R_{V(\sX)}) = V(Y//R_{V(\sX)})$ then $\widetilde{e}_X \ne \emptyset$ if and only if $\widetilde{e}_Y \ne \emptyset$. \label{prop:EquivEdges5}
\item If $E(X//R_{V(\sX)}) = E(Y//R_{V(\sX)})$ then $\widetilde{e}_X \cap E(X) \ne \emptyset$ if and only if $\widetilde{e}_Y \cap E(Y) \ne \emptyset$. \label{prop:EquivEdges6}
\item If $\SC \subseteq \X \subseteq \sX$, $X \in \X$, $\widetilde{e}_X \ne \emptyset$, $X \wedge \widetilde{e}_X \in \SC$, and $R_{V(\sX)}$ is $\SC$-preserving with respect to $\X$, then $(X \wedge \widetilde{e}_X)//R_{V(\sX)} = X//R_{V(\sX)} \wedge e//R_{V(\sX)}$. \label{prop:EquivEdges7}
\end{enumerate}
\end{proposition}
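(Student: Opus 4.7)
The plan is to dispatch Parts~\ref{prop:EquivEdges1}--\ref{prop:EquivEdges6} by unpacking the definitions of $\widetilde{e}_X$, the vertex-augmented quotient, and $X \wedge H$, and to spend the bulk of the argument on Part~\ref{prop:EquivEdges7}, which is the only part genuinely requiring the $\SC$-preservation hypothesis.

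For Part~\ref{prop:EquivEdges1}, the forward direction picks, for each $v \in e$, a representative $v' \in V(X)$ with $[v']_{R_{V(\sX)}} = [v]_{R_{V(\sX)}}$ and assembles these into a set $f \subseteq V(X)$ satisfying $f//R_{V(\sX)} = e//R_{V(\sX)}$, so $f \in \widetilde{e}_X$; the reverse is immediate. Part~\ref{prop:EquivEdges2} follows from Definition~\ref{def:QuotHyp} because the two quotient conditions $g \cap [v]_{R_{V(\sX)}} \ne \emptyset$ for each $v \in e$ and $g \subseteq \bigcup_{v \in e} [v]_{R_{V(\sX)}}$ together state precisely that $g//R_{V(\sX)} = e//R_{V(\sX)}$. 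Part~\ref{prop:EquivEdges3} uses that $\C(X) \subseteq \C(Y)$ implies $V(X) \subseteq V(Y)$. For Part~\ref{prop:EquivEdges4}, each $f \in \widetilde{e}_Y$ has every vertex lying in a component of $Y$ that $f$ meets, so by the definition of $\wedge$ that component sits inside $X = Y \wedge \widetilde{e}_Y$; this together with Part~\ref{prop:EquivEdges3} gives $\widetilde{e}_X = \widetilde{e}_Y$. For the edge equality I invoke that components of a hypergraph are induced subhypergraphs (by their maximality), so any $f \in \widetilde{e}_Y \cap E(Y)$ lies inside a single component of $Y$ that belongs to $X$, whence $f \in E(X)$. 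Parts~\ref{prop:EquivEdges5} and \ref{prop:EquivEdges6} then follow by chaining the vertex-set or edge-set hypothesis through Parts~\ref{prop:EquivEdges1} and \ref{prop:EquivEdges2}.

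For Part~\ref{prop:EquivEdges7}, write $S := X \wedge \widetilde{e}_X \in \SC$. Since $\C(S) \subseteq \C(X)$, the $\SC$-preservation hypothesis gives $\C(S//R_{V(\sX)}) \subseteq \C(X//R_{V(\sX)})$, so both $S//R_{V(\sX)}$ and $X//R_{V(\sX)} \wedge e//R_{V(\sX)}$ are unions of components of $X//R_{V(\sX)}$, reducing the claimed equality to a component-by-component check. For the inclusion $S//R_{V(\sX)} \subseteq X//R_{V(\sX)} \wedge e//R_{V(\sX)}$, any component $C'$ of $S//R_{V(\sX)}$ contains some $[v]_{R_{V(\sX)}}$ with $v$ in a component $D \in \C(S)$; by the definition of $\wedge$ some $f \in \widetilde{e}_X$ meets $D$ at a vertex $u$, and since $D//R_{V(\sX)}$ is connected and lies inside $C'$, the vertex $[u]_{R_{V(\sX)}} \in f//R_{V(\sX)} = e//R_{V(\sX)}$ belongs to $V(C')$.

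The reverse inclusion is the main obstacle. Given $C' \in \C(X//R_{V(\sX)})$ with $[u]_{R_{V(\sX)}} \in e//R_{V(\sX)} \cap V(C')$ for some $u \in e$, I lift $u$ to $u' \in V(X)$ with $[u']_{R_{V(\sX)}} = [u]_{R_{V(\sX)}}$. The key step is to manufacture an $f \in \widetilde{e}_X$ passing through $u'$: starting from any $f_0 \in \widetilde{e}_X$ (which exists by the hypothesis $\widetilde{e}_X \ne \emptyset$), replace a representative $w \in f_0$ of the class $[u]_{R_{V(\sX)}}$ by $u'$ to obtain $f := (f_0 \setminus \{w\}) \cup \{u'\} \subseteq V(X)$, and verify $f//R_{V(\sX)} = e//R_{V(\sX)}$ and $u' \in f$. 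Then the component $C_{u'} \in \C(X)$ through $u'$ is met by $f$, so $C_{u'} \in \C(S)$ and $[u']_{R_{V(\sX)}} \in V(S//R_{V(\sX)})$; by $\SC$-preservation the component of $S//R_{V(\sX)}$ containing this vertex is itself a component of $X//R_{V(\sX)}$, and since it contains $[u]_{R_{V(\sX)}} \in V(C')$ it must equal $C'$. The main care in this argument lies in the lifting construction, namely checking that the swap-and-replace step always produces a valid hyperedge in $E(\sX)$ respecting the loop convention of Definition~\ref{def:vertaugquot}; once this is handled, the two inclusions close the proof.
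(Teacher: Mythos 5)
Your proof is correct and follows essentially the same route as the paper's: Parts (1)--(6) by unwinding the definitions of $\widetilde{e}_X$, the vertex-augmented quotient, and $\wedge$, and Part (7) by using $\SC$-preservation to reduce the claim to an equality of component sets of $X//R_{V(\sX)}$. The only substantive difference is that you prove the harder inclusion of Part (7) directly rather than by contrapositive and make explicit the representative-swap construction of a hyperedge $f \in \widetilde{e}_X$ through a prescribed lift of $u$ --- the step the paper compresses into the bare assertion that some $g \in \widetilde{e}_X$ must meet a component in $B$ --- so your version is, if anything, the more complete of the two.
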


\begin{proof}
\ref{prop:EquivEdges1} $e//R_{V(\sX)} \subseteq V(X//R_{V(\sX)})$ if and only if there exists $f \subseteq V(X)$ such that $f//R_{V(\sX)} = e//R_{V(\sX)}$, if and only if $f \in \widetilde{e}_X$.

\ref{prop:EquivEdges2} If $e//R_{V(\sX)} \in E(X//R_{V(\sX)})$ then there exists $f \in E(X)$ such that $f//R_{V(\sX)} = e//R_{V(\sX)}$, so $f \in \widetilde{e}_X \cap E(X)$. Conversely, if $f \in \widetilde{e}_X \cap E(X)$ then $f//R_{V(\sX)} \in E(X//R_{V(\sX)})$, since $f//R_{V(\sX)} = e//R_{V(\sX)} \in E(\sX/R_{V(\sX)})$, and hence $e//R_{V(\sX)} = f//R_{V(\sX)} \in E(X//R_{V(\sX)})$.

\ref{prop:EquivEdges3} If $f \in \widetilde{e}_X$ then $f \in \widetilde{e}$ and $f \subseteq V(X)$, so $\C(X) \subseteq \C(Y)$ implies $f \subseteq V(X) \subseteq V(Y)$, therefore $f \in \widetilde{e}_Y$.

\ref{prop:EquivEdges4} Since $\C(X) \subseteq \C(Y)$ it follows from Part~\ref{prop:EquivEdges3} of this proposition that $\widetilde{e}_X \subseteq \widetilde{e}_Y$. For the reverse inclusion, let $f \in \widetilde{e}_Y$ so that $f \in \widetilde{e}$ and $f \subseteq V(Y)$. Noting that $\C(Y \wedge f) \subseteq \C(Y \wedge \widetilde{e}_Y)$, we have $f \subseteq V(Y)$ implies $f \subseteq V(Y \wedge f)$ implies $f \subseteq V(Y \wedge \widetilde{e}_Y) = V(X)$, hence $f \in \widetilde{e}_X$. Therefore $\widetilde{e}_X = \widetilde{e}_Y$.

Now, $\widetilde{e}_Y \cap E(Y) = \widetilde{e}_X \cap E(Y) \subseteq \widetilde{e}_X \cap E(X)$. For the reverse inclusion, if $f \in \widetilde{e}_X \cap E(X)$ then $f \in E(Y)$, so $\widetilde{e}_X \cap E(X) \subseteq \widetilde{e}_Y \cap E(Y)$. Therefore $\widetilde{e}_X \cap E(X) = \widetilde{e}_Y \cap E(Y)$.

\ref{prop:EquivEdges5} Using Part~\ref{prop:EquivEdges1} of this proposition, $\widetilde{e}_X \ne \emptyset$ if and only if $e//R_{V(\sX)} \subseteq V(X//R_{V(\sX)})$ if and only if $e//R_{V(\sX)} \subseteq V(Y//R_{V(\sX)})$ if and only if $\widetilde{e}_Y \ne \emptyset$.

\ref{prop:EquivEdges6} Using Part~\ref{prop:EquivEdges2} of this proposition, $\widetilde{e}_X \cap E(X) \ne \emptyset$ if and only if $e//R_{V(\sX)} \in E(X//R_{V(\sX)})$ if and only if $e//R_{V(\sX)} \in E(Y//R_{V(\sX)})$ if and only if $\widetilde{e}_Y \cap E(Y) \ne \emptyset$.

\ref{prop:EquivEdges7} We show $\C\big((X \wedge \widetilde{e}_X)//R_{V(\sX)}\big) = \C(X//R_{V(\sX)} \wedge e//R_{V(\sX)})$. Since $\C(X \wedge \widetilde{e}_X) \subseteq \C(X)$ and $R_{V(\sX)}$ is $\SC$-preserving with respect to $\X$ we have $\C\big((X \wedge \widetilde{e}_X)//R_{V(\sX)}\big) \subseteq \C(X//R_{V(\sX)})$. If $C' \in \C\big((X \wedge \widetilde{e}_X)//R_{V(\sX)}\big)$ then $C' = \bigcup_{C \in A} C//R_{V(\sX)}$ for some nonempty subset $A \subseteq \C(X \wedge \widetilde{e}_X)$, and since for all $C \in \C(X \wedge \widetilde{e}_X)$ there exists $f \in \widetilde{e}_X$ such that $f \cap V(C) \ne \emptyset$ it follows that $e//R_{V(\sX)} \cap V(C') \ne \emptyset$, so $C' \in \C(X//R_{V(\sX)} \wedge e//R_{V(\sX)})$. Hence $\C\big((X \wedge \widetilde{e}_X)//R_{V(\sX)}\big) \subseteq \C(X//R_{V(\sX)} \wedge e//R_{V(\sX)})$. For the reverse inclusion we prove the contrapositive, so let $C'' \in \C(X//R_{V(\sX)}) \setminus \C\big((X \wedge \widetilde{e}_X)//R_{V(\sX)}\big)$ and we show that $C'' \in \C(X//R_{V(\sX)}) \setminus \C(X//R_{V(\sX)} \wedge e//R_{V(\sX)})$. Now, $C'' \in \C(X//R_{V(\sX)})$ implies $C'' = \bigcup_{C \in B} C//R_{V(\sX)}$ for some nonempty subset $B \subseteq \C(X)$. Since $C''$ and $\C\big((X \wedge \widetilde{e}_X)//R_{V(\sX)}\big)$ are vertex disjoint, we must have $f \cap V(C) = \emptyset$ for all $C \in B$ and for all $f \in \widetilde{e}_X$. Note that if $e//R_{V(\sX)} \cap V(C'') \ne \emptyset$ then $f//R_{V(\sX)} \cap V(C'') \ne \emptyset$ for all $f \in \widetilde{e}_X$, so there must exist $g \in \widetilde{e}_X$ and $C \in B$ such that $g \cap V(C) \ne \emptyset$. It follows that $e//R_{V(\sX)} \cap V(C'') = \emptyset$, hence $C'' \notin \C(X//R_{V(\sX)} \wedge e//R_{V(\sX)})$.
\end{proof}

\begin{definition}[\textbf{Equivalent-hyperedges addition partial transformation}]
Suppose $R_{V(\sX)}$ is an equivalence relation on $V(\sX)$, $e \in E(\sX)$ with $e//R_{V(\sX)} \in E(\sX/R_{V(\sX)})$, and $\X \subseteq \sX$. We define the \emph{equivalent-hyperedges addition partial transformation} $\pi_{\widetilde{e}} \colon \X \to \sX$ such that, for $X \in \X$,
\begin{equation}
\pi_{\widetilde{e}} (X) =
\begin{cases}
X \boxplus \widetilde{e}_X & \text{if $\widetilde{e}_X \ne \emptyset$ and $\widetilde{e}_X \cap E(X) = \emptyset$},\\
X & \text{if $\widetilde{e}_X = \emptyset$ or $\widetilde{e}_X \cap E(X) \ne \emptyset$}.
\end{cases}
\end{equation}
Therefore, if $\widetilde{e}_X \ne \emptyset$ and $\widetilde{e}_X \cap E(X) = \emptyset$ then $\pi_{\widetilde{e}}$ adds all of the hyperedges in $\widetilde{e}_X$ to $X$.
\end{definition}

\begin{definition}[\textbf{$\widetilde{e}$-closed for addition, $\widetilde{e}$-amenable for addition}]
Suppose $R_{V(\sX)}$ is an equivalence relation on $V(\sX)$, $e \in E(\sX)$ with $e//R_{V(\sX)} \in E(\sX/R_{V(\sX)})$, and $\X \subseteq \sX$. We say that $\X$ is \emph{$\widetilde{e}$-closed for addition} if $X \in \X$, $\widetilde{e}_X \ne \emptyset$, and $\widetilde{e}_X \cap E(X) = \emptyset$ imply $X \wedge \widetilde{e}_X \in \X$. Further, we say that $\X$ is \emph{$\widetilde{e}$-amenable for addition} if $S \in \{\, S \in \X \mid \text{$\widetilde{e}_S \ne \emptyset$, $\widetilde{e}_S \cap E(S) = \emptyset$, and $S = S \wedge \widetilde{e}_S$ } \,\}$ implies $\C(S) \nsubseteq \C(X)$ for all $X \in \X$ with $\widetilde{e}_X \cap E(X) \ne \emptyset$.
\end{definition}

\begin{proposition}\label{prop:eHEat}
Suppose $R_{V(\sX)}$ is an equivalence relation on $V(\sX)$, $e \in E(\sX)$ with $e//R_{V(\sX)} \in E(\sX/R_{V(\sX)})$, and $\X \subseteq \sX$ is both $\widetilde{e}$-closed for addition and $\widetilde{e}$-amenable for addition. Let $\pi_{\widetilde{e}} \colon \X \to \sX$ be the equivalent-hyperedges addition partial transformation, define
\begin{equation} \label{prop:eHEat3}
\widetilde{\SC} := \{\, S \in \X \mid \text{$\widetilde{e}_S \ne \emptyset$, $\widetilde{e}_S \cap E(S) = \emptyset$, and $S = S \wedge \widetilde{e}_S$ } \,\},
\end{equation}
and for $X \in \X$ define
\begin{equation} \label{prop:eHEat4}
\widetilde{\D}_X :=
\begin{cases}
\{X \wedge \widetilde{e}_X\}, &\enspace\text{when $\widetilde{e}_X \ne \emptyset$ and $\widetilde{e}_X \cap E(X) = \emptyset$},\\
\emptyset, &\enspace\text{when $\widetilde{e}_X = \emptyset$ or $\widetilde{e}_X \cap E(X) \ne \emptyset$}.
\end{cases}
\end{equation}
Then:
\begin{enumerate}[label={(\arabic*)}]
\item $\T_{\widetilde{e}} := (\X,\pi_{\widetilde{e}},\widetilde{\SC})$ is a hypergraph transformation on $\sX$ with $\widetilde{\SC}$-maximal subsets $\{\widetilde{\D}_X\}_{X \in \X}$. \label{prop:eHEat6}
\item If $R_{V(\sX)}$ is $\widetilde{\SC}$-preserving with respect to $\X$ then $\T^+_{e//R_{V(\sX)}} := (\X/R_{V(\sX)},\pi^+_{e//R_{V(\sX)}},\SC^+)$ is a hyperedge addition hypergraph transformation on $\sX/R_{V(\sX)}$ for the hyperedge $e//R_{V(\sX)}$, where $\SC^+ = \{\, T \in \X/R_{V(\sX)} \mid \text{$e//R_{V(\sX)} \subseteq V(T)$, $e//R_{V(\sX)} \notin E(T)$, and $T = T \wedge e//R_{V(\sX)}$} \,\}$. For $X//R_{V(\sX)} \in \X/R_{V(\sX)}$ the $\SC^+$-maximal subsets are $\D^+_{X//R_{V(\sX)}} = \{X//R_{V(\sX)} \wedge e//R_{V(\sX)}\}$ if $e//R_{V(\sX)} \subseteq V(X//R_{V(\sX)})$ and $e//R_{V(\sX)} \notin E(X//R_{V(\sX)})$, and $\D^+_{X//R_{V(\sX)}} = \emptyset$ if $e//R_{V(\sX)} \nsubseteq V(X//R_{V(\sX)})$ or $e//R_{V(\sX)} \in E(X//R_{V(\sX)})$. \label{prop:eHEat7}
\item $\widetilde{\SC}/R_{V(\sX)}$ is upward closed with respect to $\SC^+$. \label{prop:eHEat8}
\item If $R_{V(\sX)}$ is $\widetilde{\SC}$-preserving with respect to $\X$ then $\T_{\widetilde{e}}$ is amenable with respect to $R_{V(\sX)}$. In particular, the hypergraph transformation $\T_{\widetilde{e}}/R_{V(\sX)} := (\X/R_{V(\sX)},\pi_{\widetilde{e}}/R_{V(\sX)},\widetilde{\SC}/R_{V(\sX)})$ is the support reduction of $\T^+_{e//R_{V(\sX)}}$ corresponding to $\widetilde{\SC}/R_{V(\sX)}$. \label{prop:eHEat9}
\end{enumerate}
\end{proposition}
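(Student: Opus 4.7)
My approach closely mirrors the proof of Proposition~\ref{prop:HEpt}, with $\widetilde{e}_X$ playing the context-dependent role that $H$ plays there. For nonredundancy, every $S\in\widetilde{\SC}$ satisfies $S=S\wedge\widetilde{e}_S$, so $\pi_{\widetilde{e}}(S)=S\boxplus\widetilde{e}_S$ modifies every component of $S$, and $\N\notin\widetilde{\SC}$ since $\widetilde{e}_{\N}=\emptyset$. For maximality, Conditions~\ref{def:CompMax1}--\ref{def:CompMax3} of Definition~\ref{def:CompMax} are immediate from \eqref{prop:eHEat4}; Condition~\ref{def:CompMax4} splits by branch. If $\widetilde{\D}_X=\{X\wedge\widetilde{e}_X\}$, then any $S\in\widetilde{\SC}$ with $\C(S)\subseteq\C(X)$ has $\widetilde{e}_S\subseteq\widetilde{e}_X$ by Part~\ref{prop:EquivEdges3} of Proposition~\ref{prop:EquivEdges}, so every component of $S$ meets some hyperedge in $\widetilde{e}_X$ and hence lies in $X\wedge\widetilde{e}_X$. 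If $\widetilde{\D}_X=\emptyset$ because $\widetilde{e}_X=\emptyset$, the same Part~\ref{prop:EquivEdges3} contradicts $\widetilde{e}_S\neq\emptyset$; if instead $\widetilde{e}_X\cap E(X)\neq\emptyset$, $\widetilde{e}$-amenability for addition directly excludes $\C(S)\subseteq\C(X)$. For the direct-sum decomposition, $\widetilde{\SC}_X=\widetilde{\D}_X$ has cardinality at most one, so vertex-disjointness is trivial, and in the active case one observes that $\bigcup\widetilde{e}_X\subseteq V(X\wedge\widetilde{e}_X)$ (every vertex of every $f\in\widetilde{e}_X$ lies in a component meeting $f$), hence $X\boxplus\widetilde{e}_X=(X\ominus(X\wedge\widetilde{e}_X))\oplus((X\wedge\widetilde{e}_X)\boxplus\widetilde{e}_X)$, which is exactly the required decomposition.

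\textbf{Parts 2 and 3.} For Part~\ref{prop:eHEat7}, I apply Proposition~\ref{prop:HEtadd} to the family $\sX/R_{V(\sX)}$ with $H=\{e//R_{V(\sX)}\}$ and subfamily $\X/R_{V(\sX)}$; the only hypothesis that needs checking is $\{e//R_{V(\sX)}\}$-closure for addition. Given $Y=X//R_{V(\sX)}$ with $e//R_{V(\sX)}\subseteq V(Y)$ and $e//R_{V(\sX)}\notin E(Y)$, Parts~\ref{prop:EquivEdges1}--\ref{prop:EquivEdges2} of Proposition~\ref{prop:EquivEdges} give $\widetilde{e}_X\neq\emptyset$ and $\widetilde{e}_X\cap E(X)=\emptyset$, so $X\wedge\widetilde{e}_X\in\X$ by $\widetilde{e}$-closure; Part~\ref{prop:EquivEdges4} shows $X\wedge\widetilde{e}_X\in\widetilde{\SC}$, which unlocks Part~\ref{prop:EquivEdges7} (this is where $\widetilde{\SC}$-preservation is used) to yield $(X\wedge\widetilde{e}_X)//R_{V(\sX)}=Y\wedge e//R_{V(\sX)}\in\X/R_{V(\sX)}$. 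For Part~\ref{prop:eHEat8}, rather than verifying upward closedness abstractly, I plan to prove the stronger equality $\widetilde{\SC}/R_{V(\sX)}=\SC^+$, from which upward closedness is automatic. The inclusion $(\subseteq)$ follows by applying Parts~\ref{prop:EquivEdges1}, \ref{prop:EquivEdges2}, and \ref{prop:EquivEdges7} of Proposition~\ref{prop:EquivEdges} to any $S\in\widetilde{\SC}$; the reverse inclusion $(\supseteq)$ uses exactly the construction from Part~\ref{prop:eHEat7}: every $T=X//R_{V(\sX)}\in\SC^+$ equals $(X\wedge\widetilde{e}_X)//R_{V(\sX)}$ with $X\wedge\widetilde{e}_X\in\widetilde{\SC}$.

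\textbf{Part 4.} Combining Parts~\ref{prop:eHEat7} and \ref{prop:eHEat8} with Proposition~\ref{prop:HypTrUpCl} produces the unique support reduction of $\T^+_{e//R_{V(\sX)}}$ corresponding to $\widetilde{\SC}/R_{V(\sX)}$; by the equality $\widetilde{\SC}/R_{V(\sX)}=\SC^+$ and Lemma~\ref{lemma:EqualHT}, this support reduction is $\T^+_{e//R_{V(\sX)}}$ itself. The remaining task is to verify that the candidate triple $\T_{\widetilde{e}}/R_{V(\sX)}$ is well defined and coincides with this support reduction. Both reduce to checking the pointwise identity $\pi_{\widetilde{e}}(X)//R_{V(\sX)}=\pi^+_{e//R_{V(\sX)}}(X//R_{V(\sX)})$ by case analysis on the three branches of $\pi_{\widetilde{e}}$, where the active case uses that $(X\boxplus\widetilde{e}_X)//R_{V(\sX)}$ has vertex set $V(X)//R_{V(\sX)}$ and hyperedge set $E(X//R_{V(\sX)})\cup\{e//R_{V(\sX)}\}$, together with Parts~\ref{prop:EquivEdges1} and \ref{prop:EquivEdges2} of Proposition~\ref{prop:EquivEdges} to handle the fixed-point branches. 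This identity simultaneously supplies well-definedness of $\pi_{\widetilde{e}}/R_{V(\sX)}$ and agreement of the subsets $\widetilde{\D}_X/R_{V(\sX)}$ with the $\SC^+$-maximal subsets from Part~\ref{prop:eHEat7}, completing the claim of amenability.

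\textbf{Anticipated obstacle.} The substantive difficulty is not any one calculation but the coordination of the three hypotheses on $\X$ and $R_{V(\sX)}$: $\widetilde{e}$-closure for addition places $X\wedge\widetilde{e}_X$ in $\X$; $\widetilde{e}$-amenability rules out the bad case of Condition~\ref{def:CompMax4} in Part~\ref{prop:eHEat6}; and $\widetilde{\SC}$-preservation is what allows Part~\ref{prop:EquivEdges7} of Proposition~\ref{prop:EquivEdges} to translate the $\widetilde{e}$-calculus into the $e//R_{V(\sX)}$-calculus at each of the three subsequent places where equalities between $(X\wedge\widetilde{e}_X)//R_{V(\sX)}$ and $X//R_{V(\sX)}\wedge e//R_{V(\sX)}$ are invoked.
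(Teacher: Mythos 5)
Your proposal is correct in substance, and for Parts~\ref{prop:eHEat8} and \ref{prop:eHEat9} it takes a genuinely different and more economical route than the paper. The paper proves only the inclusion $\widetilde{\SC}/R_{V(\sX)} \subseteq \SC^+$ together with a direct upward-closedness argument (any $T//R_{V(\sX)} \in \SC^+$ whose components contain those of some $S//R_{V(\sX)} \in \widetilde{\SC}/R_{V(\sX)}$ must equal it, since every component of $T//R_{V(\sX)}$ meets $e//R_{V(\sX)}$), and then verifies nonredundancy, component maximality, and preservation of the direct sum decomposition for $\T_{\widetilde{e}}/R_{V(\sX)}$ from scratch, case by case. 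You instead establish the equality $\widetilde{\SC}/R_{V(\sX)} = \SC^+$ and the pointwise identity $\pi_{\widetilde{e}}(X)//R_{V(\sX)} = \pi^+_{e//R_{V(\sX)}}(X//R_{V(\sX)})$, which simultaneously give well-definedness of $\pi_{\widetilde{e}}/R_{V(\sX)}$ and the literal identification $\T_{\widetilde{e}}/R_{V(\sX)} = \T^+_{e//R_{V(\sX)}}$, so that amenability and the support-reduction claim follow from Part~\ref{prop:eHEat7} and Lemma~\ref{lemma:EqualHT} with no further axiom-checking. Your equality is a strictly stronger conclusion than the paper states, but it is consistent with it (a support reduction corresponding to the full distinguished set is the transformation itself), and your reverse inclusion $\SC^+ \subseteq \widetilde{\SC}/R_{V(\sX)}$ is sound: it reuses the $e//R_{V(\sX)}$-closure construction, with $T = T \wedge e//R_{V(\sX)}$ and Part~\ref{prop:EquivEdges7} of Proposition~\ref{prop:EquivEdges} forcing $T = (X \wedge \widetilde{e}_X)//R_{V(\sX)}$. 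Part~\ref{prop:eHEat6} and Part~\ref{prop:eHEat7} of your proposal follow the paper essentially verbatim.

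One caveat: the reverse inclusion $\SC^+ \subseteq \widetilde{\SC}/R_{V(\sX)}$ genuinely requires the hypothesis that $R_{V(\sX)}$ is $\widetilde{\SC}$-preserving (it is what licenses Part~\ref{prop:EquivEdges7} of Proposition~\ref{prop:EquivEdges}, and without it $X//R_{V(\sX)} \wedge e//R_{V(\sX)}$ can strictly contain $(X \wedge \widetilde{e}_X)//R_{V(\sX)}$ when the quotient merges components). Part~\ref{prop:eHEat8} as stated carries no such hypothesis, and the bare upward-closedness claim can be proved without it (the forward inclusion needs only that every component of $S$ meets some $f \in \widetilde{e}_S$, hence every component of $S//R_{V(\sX)}$ meets $e//R_{V(\sX)}$). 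So you should either retain the paper's direct argument for Part~\ref{prop:eHEat8} and defer the equality to Part~\ref{prop:eHEat9}, where $\widetilde{\SC}$-preservation is assumed, or record the extra hypothesis explicitly. A second, minor point: in the direct-sum step of Part~\ref{prop:eHEat6} you should cite Part~\ref{prop:EquivEdges4} of Proposition~\ref{prop:EquivEdges} to justify $(X \wedge \widetilde{e}_X) \boxplus \widetilde{e}_X = \pi_{\widetilde{e}}(X \wedge \widetilde{e}_X)$, since $\pi_{\widetilde{e}}$ applied to $S := X \wedge \widetilde{e}_X$ acts via $\widetilde{e}_S$, not $\widetilde{e}_X$.
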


\begin{proof}
\ref{prop:eHEat6} For nonredundancy, if $S \in \widetilde{\SC}$ then $\pi_{\widetilde{e}}(S) = S \boxplus \widetilde{e}_S$ modifies all components of $S$ by addition of the hyperedges in $\widetilde{e}_S$, since $S = S \wedge \widetilde{e}_S$, hence $\C(S) \cap \C\big(\pi_{\widetilde{e}}(S)\big) = \emptyset$; additionally, $\N \notin \widetilde{\SC}$.

To see that $\widetilde{\SC}$ is component maximal, let $X \in \X$. Conditions~\ref{def:CompMax1} to \ref{def:CompMax3} of Definition~\ref{def:CompMax} follow immediately from the definition of $\widetilde{\D}_X$, and for Condition~\ref{def:CompMax4} suppose $S \in \widetilde{\SC}$ and $\C(S) \subseteq \C(X)$. Part~\ref{prop:EquivEdges3} of Proposition~\ref{prop:EquivEdges} implies $\widetilde{e}_S \subseteq \widetilde{e}_X$, and hence $\widetilde{e}_X \ne \emptyset$ since $\widetilde{e}_S \ne \emptyset$. Moreover, since $\X$ is $\widetilde{e}$-amenable for addition it follows that $\widetilde{e}_X \cap E(X) = \emptyset$. So $\widetilde{\D}_X = \{X \wedge \widetilde{e}_X\}$. Now, $\C(S \wedge \widetilde{e}_S) \subseteq \C(X \wedge \widetilde{e}_X)$, since $\widetilde{e}_S \subseteq \widetilde{e}_X$ and $\C(S) \subseteq \C(X)$, therefore $\C(S) \subseteq \C(X \wedge \widetilde{e}_X)$, since $S = S \wedge \widetilde{e}_S$.

To show the direct sum decomposition is preserved, let $X \in \X$. Note that $\widetilde{\SC}_X = \widetilde{\D}_X$, so $\lvert \widetilde{\SC}_X \rvert = \lvert \pi_{\widetilde{e}}(\widetilde{\SC}_X) \rvert \le 1$, and it also holds trivially that $\pi_{\widetilde{e}}(\widetilde{\SC}_X)$ consists of pairwise vertex-disjoint hypergraphs. If $\widetilde{e}_X \ne \emptyset$ and $\widetilde{e}_X \cap E(X) = \emptyset$ then $\widetilde{\SC}_X = \widetilde{\D}_X = \{S\}$ where $S = X \wedge \widetilde{e}_X$, and $\widetilde{e}_S = \widetilde{e}_X$ by Part~\ref{prop:EquivEdges4} of Proposition~\ref{prop:EquivEdges}, so $\pi_{\widetilde{e}} (X) = X \boxplus \widetilde{e}_X = (X \ominus S) \oplus (S \boxplus \widetilde{e}_X) = (X \ominus S) \oplus (S \boxplus \widetilde{e}_S) = \widebar{X} \oplus \pi_{\widetilde{e}} (S)$, where $\widebar{X} = X \ominus S$. If $\widetilde{e}_X = \emptyset$ or $\widetilde{e}_X \cap E(X) \ne \emptyset$ then $\widetilde{\SC}_X = \widetilde{\D}_X = \emptyset$, so $\pi_{\widetilde{e}} (X) = X = \widebar{X}$. In any case we have $\pi_{\widetilde{e}} (X) = \widebar{X} \oplus \big(\bigoplus_{S \in \widetilde{\SC}_X} \pi_{\widetilde{e}}(S)\big)$, where $\widebar{X} := X \ominus \big(\bigoplus_{S \in \widetilde{\SC}_X} S\big)$. We conclude that $(\X,\pi_{\widetilde{e}},\widetilde{\SC})$ is a hypergraph transformation on $\sX$ with $\widetilde{\SC}$-maximal subsets $\{\widetilde{\D}_X\}_{X \in \X}$.

\ref{prop:eHEat7} Note that $e//R_{V(\sX)} \in E(\sX/R_{V(\sX)})$ and $\X/R_{V(\sX)} \subseteq \sX/R_{V(\sX)}$, so $\pi^+_{e//R_{V(\sX)}} \colon \X/R_{V(\sX)} \to \sX/R_{V(\sX)}$ is a hyperedge addition partial transformation. We show that $\X/R_{V(\sX)}$ is $e//R_{V(\sX)}$-closed for addition, and then the result follows from Part~\ref{prop:HEtadd5} of Proposition~\ref{prop:HEtadd}. Suppose $X//R_{V(\sX)} \in \X/R_{V(\sX)}$ with $X \in \X$, $e//R_{V(\sX)} \subseteq V(X//R_{V(\sX)})$, and $e//R_{V(\sX)} \notin E(X//R_{V(\sX)})$. Then Parts~\ref{prop:EquivEdges1} and \ref{prop:EquivEdges2} of Proposition~\ref{prop:EquivEdges} imply $\widetilde{e}_X \ne \emptyset$ and $\widetilde{e}_X \cap E(X) = \emptyset$, respectively, so since $\X$ is $\widetilde{e}$-closed for addition we have $S := X \wedge \widetilde{e}_X \in \X$. Since $\widetilde{e}_S = \widetilde{e}_X$ by Part~\ref{prop:EquivEdges4} of Proposition~\ref{prop:EquivEdges}, since $\widetilde{e}_X \cap E(X) = \emptyset$ implies $\widetilde{e}_S \cap E(S) = \emptyset$, and since $S \wedge \widetilde{e}_S = S \wedge \widetilde{e}_X = S$, it follows that $S \in \widetilde{\SC}$. So $X//R_{V(\sX)} \wedge e//R_{V(\sX)} = S//R_{V(\sX)} \in \X/R_{V(\sX)}$, where the equality holds by Part~\ref{prop:EquivEdges7} of Proposition~\ref{prop:EquivEdges} since $R_{V(\sX)}$ is $\widetilde{\SC}$-preserving with respect to $\X$.

\ref{prop:eHEat8} We show that $\widetilde{\SC}/R_{V(\sX)} \subseteq \SC^+$, so let $S//R_{V(\sX)} \in \widetilde{\SC}/R_{V(\sX)}$ with $S \in \widetilde{\SC}$, noting $S//R_{V(\sX)} \in \X/R_{V(\sX)}$. Then $\widetilde{e}_S \ne \emptyset$ implies $e//R_{V(\sX)} \subseteq V(S//R_{V(\sX)})$, $\widetilde{e}_S \cap E(S) = \emptyset$ implies $e//R_{V(\sX)} \notin E(S//R_{V(\sX)})$, and $S = S \wedge \widetilde{e}_S$ implies $S//R_{V(\sX)} = (S \wedge \widetilde{e}_S)//R_{V(\sX)} = S//R_{V(\sX)} \wedge e//R_{V(\sX)}$ by Parts~\ref{prop:EquivEdges1}, \ref{prop:EquivEdges2}, and \ref{prop:EquivEdges7} of Proposition~\ref{prop:EquivEdges}, respectively. Hence $S//R_{V(\sX)} \in \SC^+$, and we conclude that $\widetilde{\SC}/R_{V(\sX)} \subseteq \SC^+$.

To see that $\widetilde{\SC}/R_{V(\sX)}$ is upward closed with respect to $\SC^+$, suppose $S//R_{V(\sX)} \in \widetilde{\SC}/R_{V(\sX)}$ with $S \in \widetilde{\SC}$, $T//R_{V(\sX)} \in \SC^+$, and $\C(S//R_{V(\sX)}) \subseteq \C(T//R_{V(\sX)})$. We show that $T//R_{V(\sX)} \in \widetilde{\SC}/R_{V(\sX)}$. Note that $S \in \widetilde{\SC}$ implies $\widetilde{e}_S \ne \emptyset$ implies $e//R_{V(\sX)} \subseteq V(S//R_{V(\sX)})$, by Part~\ref{prop:EquivEdges1} of Proposition~\ref{prop:EquivEdges}, and also $e//R_{V(\sX)} \subseteq V(T//R_{V(\sX)})$. So $C \in \C(T//R_{V(\sX)}) \setminus \C(S//R_{V(\sX)})$ implies $e//R_{V(\sX)} \cap V(C) = \emptyset$, hence $T//R_{V(\sX)} \ne T//R_{V(\sX)} \wedge e//R_{V(\sX)}$, contradicting $T//R_{V(\sX)} \in \SC^+$. It follows that $S//R_{V(\sX)} = T//R_{V(\sX)}$, hence $T//R_{V(\sX)} \in \widetilde{\SC}/R_{V(\sX)}$. Note that it is not necessarily true that $T \in \widetilde{\SC}$. We conclude that $\widetilde{\SC}/R_{V(\sX)}$ is upward closed with respect to $\SC^+$.

\ref{prop:eHEat9} We show that $\T_{\widetilde{e}}/R_{V(\sX)}$ is a well defined hypergraph transformation, first showing that the partial transformation $\pi_{\widetilde{e}}/R_{V(\sX)}$ is well defined, and second showing that $\T_{\widetilde{e}}/R_{V(\sX)}$ satisfies Conditions~\ref{Ht1}--\ref{Ht3} in Definition~\ref{def:Ht}.

First, let $X//R_{V(\sX)}$, $Y//R_{V(\sX)} \in \X/R_{V(\sX)}$ with $X//R_{V(\sX)} = Y//R_{V(\sX)}$. Then $\widetilde{e}_X \ne \emptyset$ if and only if $\widetilde{e}_Y \ne \emptyset$ by Part~\ref{prop:EquivEdges5} of Proposition~\ref{prop:EquivEdges}, and $\widetilde{e}_X \cap E(X) \ne \emptyset$ if and only if $\widetilde{e}_Y \cap E(Y) \ne \emptyset$ by Part~\ref{prop:EquivEdges6} of Proposition~\ref{prop:EquivEdges}. So $\widetilde{e}_X = \emptyset$ or $\widetilde{e}_X \cap E(X) \ne \emptyset$ implies $\pi_{\widetilde{e}}/R_{V(\sX)} (X//R_{V(\sX)}) = \pi_{\widetilde{e}}(X)//R_{V(\sX)} = X//R_{V(\sX)} = Y//R_{V(\sX)} = \pi_{\widetilde{e}}(Y)//R_{V(\sX)} = \pi_{\widetilde{e}}/R_{V(\sX)} (Y//R_{V(\sX)})$. Further, $\widetilde{e}_X \ne \emptyset$ and $\widetilde{e}_X \cap E(X) = \emptyset$ implies $\pi_{\widetilde{e}}/R_{V(\sX)} (X//R_{V(\sX)}) = \pi_{\widetilde{e}}(X)//R_{V(\sX)} = (X \boxplus \widetilde{e}_X)//R_{V(\sX)} = X//R_{V(\sX)} \boxplus e//R_{V(\sX)} = Y//R_{V(\sX)} \boxplus e//R_{V(\sX)} = (Y \boxplus \widetilde{e}_Y)//R_{V(\sX)} = \pi_{\widetilde{e}}(Y)//R_{V(\sX)} = \pi_{\widetilde{e}}/R_{V(\sX)} (Y//R_{V(\sX)})$. It follows that $\pi_{\widetilde{e}}/R_{V(\sX)}$ is well defined.

Second, nonredundancy holds since if $S//R_{V(\sX)} \in \widetilde{\SC}/R_{V(\sX)}$ with $S \in \widetilde{\SC}$ then $\pi_{\widetilde{e}}/R_{V(\sX)}(S//R_{V(\sX)}) = \pi_{\widetilde{e}}(S)//R_{V(\sX)} = (S \boxplus \widetilde{e}_S)//R_{V(\sX)} = S//R_{V(\sX)} \boxplus e//R_{V(\sX)}$, so $\C(S//R_{V(\sX)}) \cap \C\big(\pi_{\widetilde{e}}/R_{V(\sX)}(S//R_{V(\sX)})\big) = \emptyset$ since addition of the hyperedge $e//R_{V(\sX)}$ modifies all components of $S//R_{V(\sX)}$; additionally, $\N \notin \widetilde{\SC}/R_{V(\sX)}$.

Further, $\widetilde{\SC}/R_{V(\sX)}$ is upward closed with respect to $\SC^+$ by Part~\ref{prop:eHEat8} of this proposition, so Part~\ref{prop:UpCl3} of Proposition~\ref{prop:UpCl} implies that $\widetilde{\SC}/R_{V(\sX)}$ is component maximal with $\widetilde{\SC}/R_{V(\sX)}$-maximal subsets defined by $\D^+_{X//R_{V(\sX)}} \cap \widetilde{\SC}/R_{V(\sX)}$ for $X//R_{V(\sX)} \in \X/R_{V(\sX)}$. Let $X//R_{V(\sX)} \in \X/R_{V(\sX)}$. We show that the subset $\widetilde{\D}_X/R_{V(\sX)}$ is $\widetilde{\SC}/R_{V(\sX)}$-maximal, and then uniqueness by Part~\ref{prop:CompMax1} of Proposition~\ref{prop:CompMax} will give $\widetilde{\D}_X/R_{V(\sX)} = \D^+_{X//R_{V(\sX)}} \cap \widetilde{\SC}/R_{V(\sX)}$. Note that each $\widetilde{\D}_X/R_{V(\sX)}$ is well defined: if $X//R_{V(\sX)} = Y//R_{V(\sX)} \in \X/R_{V(\sX)}$ then $\widetilde{e}_X \ne \emptyset$ if and only if $\widetilde{e}_Y \ne \emptyset$, and $\widetilde{e}_X \cap E(X) \ne \emptyset$ if and only if $\widetilde{e}_Y \cap E(Y) \ne \emptyset$, by Parts~\ref{prop:EquivEdges5} and \ref{prop:EquivEdges6} of Proposition~\ref{prop:EquivEdges}, respectively; so either $\widetilde{\D}_X/R_{V(\sX)} = \emptyset = \widetilde{\D}_Y/R_{V(\sX)}$ or, since $R_{V(\sX)}$ is $\widetilde{\SC}$-preserving with respect to $\X$, Part~\ref{prop:EquivEdges7} of Proposition~\ref{prop:EquivEdges} implies $(X \wedge \widetilde{e}_X)//R_{V(\sX)} = X//R_{V(\sX)} \wedge e//R_{V(\sX)} = Y//R_{V(\sX)} \wedge e//R_{V(\sX)} = (Y \wedge \widetilde{e}_Y)//R_{V(\sX)}$ and therefore $\widetilde{\D}_X/R_{V(\sX)} = \{(X \wedge \widetilde{e}_X)//R_{V(\sX)}\} = \widetilde{\D}_Y/R_{V(\sX)}$. Now, Conditions~\ref{def:CompMax1} to \ref{def:CompMax3} of Definition~\ref{def:CompMax} follow from the definition of $\widetilde{\D}_X/R_{V(\sX)}$, noting that $\C\big((X \wedge \widetilde{e}_X)//R_{V(\sX)}\big) = \C(X//R_{V(\sX)} \wedge e//R_{V(\sX)}) \subseteq \C(X//R_{V(\sX)})$ since $R_{V(\sX)}$ is $\widetilde{\SC}$-preserving with respect to $\X$ and applying Part~\ref{prop:EquivEdges7} of Proposition~\ref{prop:EquivEdges}. For Condition~\ref{def:CompMax4} of Definition~\ref{def:CompMax}, suppose $S//R_{V(\sX)} \in \widetilde{\SC}/R_{V(\sX)}$ with $S \in \widetilde{\SC}$, and $\C(S//R_{V(\sX)}) \subseteq \C(X//R_{V(\sX)})$. Since $S \in \widetilde{\SC}$ we have $\widetilde{e}_S \ne \emptyset$ implies $e//R_{V(\sX)} \subseteq V(S//R_{V(\sX)})$, by Part~\ref{prop:EquivEdges1} of Proposition~\ref{prop:EquivEdges}, and $\widetilde{e}_S \cap E(S) = \emptyset$ implies $e//R_{V(\sX)} \notin E(S//R_{V(\sX)})$, by Part~\ref{prop:EquivEdges2} of Proposition~\ref{prop:EquivEdges}. Then $e//R_{V(\sX)} \subseteq V(X//R_{V(\sX)})$ implies $\widetilde{e}_X \ne \emptyset$, by Part~\ref{prop:EquivEdges1} of Proposition~\ref{prop:EquivEdges}, and since $e//R_{V(\sX)} \notin E(S//R_{V(\sX)})$ implies $e//R_{V(\sX)} \notin E(X//R_{V(\sX)})$ we have $\widetilde{e}_X \cap E(X) = \emptyset$ by Part~\ref{prop:EquivEdges2} of Proposition~\ref{prop:EquivEdges}. So $\widetilde{\D}_X/R_{V(\sX)} = \{(X \wedge \widetilde{e}_X)//R_{V(\sX)}\}$. Now, $C \in \C(X//R_{V(\sX)}) \setminus \C(S//R_{V(\sX)})$ implies $e//R_{V(\sX)} \cap V(C) = \emptyset$, hence $S//R_{V(\sX)} = X//R_{V(\sX)} \wedge e//R_{V(\sX)} = (X \wedge \widetilde{e}_X)//R_{V(\sX)} \in \widetilde{\D}_X/R_{V(\sX)}$, where the last equality follows from Part~\ref{prop:EquivEdges7} of Proposition~\ref{prop:EquivEdges} noting that $R_{V(\sX)}$ is $\widetilde{\SC}$-preserving with respect to $\X$. We conclude that $\widetilde{\D}_X/R_{V(\sX)}$ is $\widetilde{\SC}/R_{V(\sX)}$-maximal.

We show the direct sum decomposition is preserved, so let $X//R_{V(\sX)} \in \X/R_{V(\sX)}$. For notational simplicity denote $\SC^{\ast} := \widetilde{\SC}/R_{V(\sX)}$ and $Y := X \wedge \widetilde{e}_X$. Note that Part~\ref{prop:EquivEdges4} of Proposition~\ref{prop:EquivEdges} implies $\widetilde{e}_Y = \widetilde{e}_X$ and $\widetilde{e}_Y \cap E(Y) = \widetilde{e}_X \cap E(X)$. First suppose that $\widetilde{e}_X = \emptyset$ or $\widetilde{e}_X \cap E(X) \ne \emptyset$. Then $\widetilde{\D}_X/R_{V(\sX)} = \emptyset$, so $\SC^{\ast}_{X//R_{V(\sX)}} = \emptyset$, hence $\lvert \SC^{\ast}_{X//R_{V(\sX)}} \rvert = \lvert \pi_{\widetilde{e}}/R_{V(\sX)}(\SC^{\ast}_{X//R_{V(\sX)}}) \rvert = 0$, it holds trivially that $\pi_{\widetilde{e}}/R_{V(\sX)}(\SC^{\ast}_{X//R_{V(\sX)}})$ consists of pairwise vertex-disjoint hypergraphs, and $\pi_{\widetilde{e}}/R_{V(\sX)} (X//R_{V(\sX)}) = X//R_{V(\sX)} = \overline{X//R_{V(\sX)}}$. Second suppose that $\widetilde{e}_X \ne \emptyset$ and $\widetilde{e}_X \cap E(X) = \emptyset$. Then $\widetilde{\D}_X/R_{V(\sX)} = \{Y//R_{V(\sX)}\}$. Now, $\pi_{\widetilde{e}}/R_{V(\sX)}(Y//R_{V(\sX)}) = \pi_{\widetilde{e}}(Y)//R_{V(\sX)} = (Y \boxplus \widetilde{e}_Y)//R_{V(\sX)} = Y//R_{V(\sX)} \boxplus e//R_{V(\sX)}$, where the second equality follows since $\widetilde{e}_Y \ne \emptyset$ and $\widetilde{e}_Y \cap E(Y) = \emptyset$. Then $V\big(\pi_{\widetilde{e}}/R_{V(\sX)}(Y//R_{V(\sX)})\big) \cap V(X//R_{V(\sX)} \ominus Y//R_{V(\sX)}) = \emptyset$, so $\SC^{\ast}_{X//R_{V(\sX)}} = \{Y//R_{V(\sX)}\}$, hence $\lvert \SC^{\ast}_{X//R_{V(\sX)}} \rvert = \lvert \pi_{\widetilde{e}}/R_{V(\sX)}(\SC^{\ast}_{X//R_{V(\sX)}}) \rvert = 1$, and it also holds trivially that $\pi_{\widetilde{e}}/R_{V(\sX)}(\SC^{\ast}_{X//R_{V(\sX)}})$ consists of pairwise vertex-disjoint hypergraphs. Further, $\pi_{\widetilde{e}}/R_{V(\sX)}(X//R_{V(\sX)}) = \pi_{\widetilde{e}}(X)//R_{V(\sX)} = (X \boxplus \widetilde{e}_X)//R_{V(\sX)} = X//R_{V(\sX)} \boxplus e//R_{V(\sX)} = (X//R_{V(\sX)} \ominus Y//R_{V(\sX)}) \oplus (Y//R_{V(\sX)} \boxplus e//R_{V(\sX)}) = \overline{X//R_{V(\sX)}} \oplus \pi_{\widetilde{e}}/R_{V(\sX)} (Y//R_{V(\sX)})$, where $\overline{X//R_{V(\sX)}} := (X//R_{V(\sX)} \ominus Y//R_{V(\sX)})$. In any case we have the decomposition $\pi_{\widetilde{e}}/R_{V(\sX)} (X//R_{V(\sX)}) = \overline{X//R_{V(\sX)}} \oplus \big(\bigoplus_{S//R_{V(\sX)} \in \SC^{\ast}_{X//R_{V(\sX)}}} \pi_{\widetilde{e}}/R_{V(\sX)} (S//R_{V(\sX)})\big)$, where $\overline{X//R_{V(\sX)}} := X//R_{V(\sX)} \ominus \big(\bigoplus_{S//R_{V(\sX)} \in \SC^{\ast}_{X//R_{V(\sX)}}} S//R_{V(\sX)} \big)$.

We conclude that $\T_{\widetilde{e}}/R_{V(\sX)}$ is a hypergraph transformation on $\sX/R_{V(\sX)}$ with $\widetilde{\SC}/R_{V(\sX)}$-maximal subsets $\widetilde{\D}_X/R_{V(\sX)}$ for $X//R_{V(\sX)} \in \X/R_{V(\sX)}$.

To show that $\T_{\widetilde{e}}/R_{V(\sX)}$ is the support reduction of $\T^+_{e//R_{V(\sX)}}$ corresponding to $\widetilde{\SC}/R_{V(\sX)}$ it remains to show that $\pi_{\widetilde{e}}/R_{V(\sX)}$ and $\pi^+_{e//R_{V(\sX)}}$ are equal on $\widetilde{\SC}/R_{V(\sX)}$, so let $S//R_{V(\sX)} \in \widetilde{\SC}/R_{V(\sX)}$ with $S \in \widetilde{\SC}$. Then $\widetilde{e}_S \ne \emptyset$ implies $e//R_{V(\sX)} \subseteq V(S//R_{V(\sX)})$, by Part~\ref{prop:EquivEdges1} of Proposition~\ref{prop:EquivEdges}, and $\widetilde{e}_S \cap E(S) = \emptyset$ implies $e//R_{V(\sX)} \notin E(S//R_{V(\sX)})$, by Part~\ref{prop:EquivEdges2} of Proposition~\ref{prop:EquivEdges}. So $\pi_{\widetilde{e}}/R_{V(\sX)}(S//R_{V(\sX)}) = \pi_{\widetilde{e}}(S)//R_{V(\sX)} = (S \boxplus \widetilde{e}_S)//R_{V(\sX)} = S//R_{V(\sX)} \boxplus e//R_{V(\sX)} = \pi^+_{e//R_{V(\sX)}}(S//R_{V(\sX)})$, as required.
\end{proof}

\subsubsection{Quotients of hypergraph addition hypergraph transformations}\label{subsubsec:QTransformations_Ex_HG}
We now consider an example of an amenable hypergraph transformation involving hypergraph addition.

\begin{lemma} \label{lemma:RequivH}
Suppose $\X \subseteq \sX$, $W \in \sX^{\ast}$, and $R_{V(\sX)}$ is an equivalence relation on $V(\sX)$ that is $W$-disjointness preserving with respect to $\X$. Then the following hold for all $X$, $Y \in \X$:
\begin{enumerate}[label={(\arabic*)}]
\item $V(W) \cap V(X) = \emptyset$ if and only if $V(W//R_{V(\sX)}) \cap V(X//R_{V(\sX)}) = \emptyset$. \label{lemma:RequivH1}
\item If $X//R_{V(\sX)} = Y//R_{V(\sX)}$ then $V(W) \cap V(X) = \emptyset$ if and only if $V(W) \cap V(Y) = \emptyset$. \label{lemma:RequivH2}
\end{enumerate}
\end{lemma}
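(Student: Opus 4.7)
The plan is to prove Part~(1) directly and then derive Part~(2) as an immediate corollary using the fact that $X//R_{V(\sX)} = Y//R_{V(\sX)}$ implies $V(X//R_{V(\sX)}) = V(Y//R_{V(\sX)})$.

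For Part~(1), the forward direction is essentially a restatement of the definition of $R_{V(\sX)}$ being $W$-disjointness preserving with respect to $\X$, as given in Part~\ref{def:DpresER} of Definition~\ref{def:presER}, since $X \in \X$ by hypothesis. For the reverse direction, I would argue by contrapositive and note that this direction does not require the $W$-disjointness preserving hypothesis at all. Specifically, if $v \in V(W) \cap V(X)$ then $[v]_{R_{V(\sX)}} \in V(W//R_{V(\sX)})$ and $[v]_{R_{V(\sX)}} \in V(X//R_{V(\sX)})$ by the definition of the vertex-augmented quotient hypergraph (Definition~\ref{def:VaugH}), hence $[v]_{R_{V(\sX)}} \in V(W//R_{V(\sX)}) \cap V(X//R_{V(\sX)})$, which is therefore nonempty.

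For Part~(2), suppose $X//R_{V(\sX)} = Y//R_{V(\sX)}$, so in particular $V(X//R_{V(\sX)}) = V(Y//R_{V(\sX)})$. Applying Part~(1) twice gives the chain $V(W) \cap V(X) = \emptyset$ iff $V(W//R_{V(\sX)}) \cap V(X//R_{V(\sX)}) = \emptyset$ iff $V(W//R_{V(\sX)}) \cap V(Y//R_{V(\sX)}) = \emptyset$ iff $V(W) \cap V(Y) = \emptyset$, which is the desired equivalence.

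There is no substantive obstacle here: the proof is essentially bookkeeping with the definitions of vertex-augmented quotient hypergraph and $W$-disjointness preserving equivalence relation. The only subtle observation is that the reverse direction of Part~(1) is purely set-theoretic and holds for any equivalence relation, so the $W$-disjointness preserving hypothesis is only needed to obtain the forward direction.
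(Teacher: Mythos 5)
Your proof is correct and follows essentially the same route as the paper: the forward direction of Part~(1) is the defining property, the reverse direction is the contrapositive via a common equivalence class, and Part~(2) is the chain of equivalences obtained by applying Part~(1) to $X$ and to $Y$. Your added remark that the reverse direction of Part~(1) is purely set-theoretic is accurate and slightly more explicit than the paper's one-line contrapositive.
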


\begin{proof}
\ref{lemma:RequivH1} Since $R_{V(\sX)}$ is $W$-disjointness preserving with respect to $\X$ the forward direction holds. For the reverse direction, a contrapositive argument gives $V(W) \cap V(X) \ne \emptyset$ implies $V(W//R_{V(\sX)}) \cap V(X//R_{V(\sX)}) \ne \emptyset$.

\ref{lemma:RequivH2} $V(W) \cap V(X) = \emptyset$ if and only if $V(W//R_{V(\sX)}) \cap V(X//R_{V(\sX)}) = \emptyset$ if and only if $V(W//R_{V(\sX)}) \cap V(Y//R_{V(\sX)}) = \emptyset$ if and only if $V(W) \cap V(Y) = \emptyset$, where the first and third equivalences hold by Part\ref{lemma:RequivH1} of this proposition.

\end{proof}

\begin{proposition}\label{prop:HaTq}
Suppose $W \in \sX^{\ast}$, $\X \subseteq \sX$ with $\N \in \X$, and $R_{V(\sX)}$ is an equivalence relation on $V(\sX)$ that is $W$-disjointness preserving with respect to $\X$. Then the hypergraph transformation $\T^+_W = (\X,\pi^+_W,\SC^+)$ is amenable with respect to $R_{V(\sX)}$. In particular, the hypergraph transformation $\T^+_W/R_{V(\sX)} := (\X/R_{V(\sX)},\pi^+_W/R_{V(\sX)},\SC^+/R_{V(\sX)})$ is equal to the hypergraph addition hypergraph transformation $\T^+_{W//R_{V(\sX)}} := (\X/R_{V(\sX)},\pi^+_{W//R_{V(\sX)}},\SC^+)$ where $W//R_{V(\sX)} \in \sX^{\ast}/R_{V(\sX)}$ and $\SC^+ = \{\N\}$.
\end{proposition}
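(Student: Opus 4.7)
The plan is to verify amenability by reducing the whole statement to the identification $\pi^+_W/R_{V(\sX)} = \pi^+_{W//R_{V(\sX)}}$ on $\X/R_{V(\sX)}$, and then invoking Proposition~\ref{prop:HaT} applied to the hypergraph $W//R_{V(\sX)}$ in the hypergraph family $\sX/R_{V(\sX)}$.

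First I would check that the partial transformation $\pi^+_W/R_{V(\sX)}$ of Definition~\ref{def:AmenQuotHT} is well defined. Suppose $X, Y \in \X$ satisfy $X//R_{V(\sX)} = Y//R_{V(\sX)}$. By Part~\ref{lemma:RequivH2} of Lemma~\ref{lemma:RequivH}, $V(W) \cap V(X) = \emptyset$ iff $V(W) \cap V(Y) = \emptyset$, so $\pi^+_W(X)$ and $\pi^+_W(Y)$ are both equal to $X$, $Y$ (respectively) in the nondisjoint case, giving $\pi^+_W(X)//R_{V(\sX)} = X//R_{V(\sX)} = Y//R_{V(\sX)} = \pi^+_W(Y)//R_{V(\sX)}$; in the disjoint case, $\pi^+_W(X)//R_{V(\sX)} = (X \oplus W)//R_{V(\sX)} = X//R_{V(\sX)} \oplus W//R_{V(\sX)} = Y//R_{V(\sX)} \oplus W//R_{V(\sX)} = \pi^+_W(Y)//R_{V(\sX)}$. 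Well-definedness of the subsets $\D^+_X/R_{V(\sX)} = \{\N\}$ is immediate since these sets do not depend on $X$.

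Next I would observe that $W \in \sX^{\ast}$ guarantees $W//R_{V(\sX)} \in \sX^{\ast}/R_{V(\sX)}$ (any $v \in V(W)$ yields $[v]_{R_{V(\sX)}} \in V(W//R_{V(\sX)})$), and $\N \in \X$ gives $\N = \N//R_{V(\sX)} \in \X/R_{V(\sX)}$. Hence Proposition~\ref{prop:HaT} applies to produce the hypergraph addition hypergraph transformation $\T^+_{W//R_{V(\sX)}} := (\X/R_{V(\sX)}, \pi^+_{W//R_{V(\sX)}}, \{\N\})$ on $\sX/R_{V(\sX)}$, with $\SC^+$-maximal subsets $\D^+_{X//R_{V(\sX)}} = \{\N\}$.

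It then suffices to show $\pi^+_W/R_{V(\sX)} = \pi^+_{W//R_{V(\sX)}}$ as partial transformations on $\X/R_{V(\sX)}$, since the distinguished-hypergraph collections $\SC^+/R_{V(\sX)} = \{\N\}$ and the families of $\SC^+$-maximal subsets $\{\N\}$ already coincide on both sides. For any $X//R_{V(\sX)} \in \X/R_{V(\sX)}$, Part~\ref{lemma:RequivH1} of Lemma~\ref{lemma:RequivH} gives the equivalence $V(W) \cap V(X) = \emptyset$ iff $V(W//R_{V(\sX)}) \cap V(X//R_{V(\sX)}) = \emptyset$. In the disjoint case $\pi^+_W/R_{V(\sX)}(X//R_{V(\sX)}) = (X \oplus W)//R_{V(\sX)} = X//R_{V(\sX)} \oplus W//R_{V(\sX)} = \pi^+_{W//R_{V(\sX)}}(X//R_{V(\sX)})$, and in the nondisjoint case both sides equal $X//R_{V(\sX)}$. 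This gives the equality of the two triples, and in particular $\T^+_W/R_{V(\sX)}$ is a hypergraph transformation, so $\T^+_W$ is amenable. The main point to handle carefully — though really just a bookkeeping check — is ensuring that Lemma~\ref{lemma:RequivH} delivers the two-way implications needed both for well-definedness of $\pi^+_W/R_{V(\sX)}$ and for the identification with $\pi^+_{W//R_{V(\sX)}}$; everything else is direct from the definitions.
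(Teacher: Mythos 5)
Your proposal is correct and follows essentially the same route as the paper's proof: reduce everything to well-definedness of $\pi^+_W/R_{V(\sX)}$ (via Part~(2) of Lemma~\ref{lemma:RequivH}) and the identification $\pi^+_W/R_{V(\sX)} = \pi^+_{W//R_{V(\sX)}}$ (via Part~(1)), with the distinguished collections trivially agreeing since both equal $\{\N\}$. Your explicit checks that $W//R_{V(\sX)} \in \sX^{\ast}/R_{V(\sX)}$ and that Proposition~\ref{prop:HaT} applies are sensible bookkeeping that the paper leaves implicit, but do not change the argument.
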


\begin{proof}
Noting that $\SC^+/R_{V(\sX)} = \{\N\} = \SC^+$, it suffices to show that $\pi^+_W/R_{V(\sX)}$ is a well defined partial transformation on $\sX/R_{V(\sX)}$ such that $\pi^+_W/R_{V(\sX)} = \pi^+_{W//R_{V(\sX)}}$.

To show $\pi^+_W/R_{V(\sX)}$ is well defined, let $X//R_{V(\sX)}$, $Y//R_{V(\sX)} \in \X/R_{V(\sX)}$ with $X//R_{V(\sX)} = Y//R_{V(\sX)}$. First, if $V(W) \cap V(X) \ne \emptyset$, equivalently $V(W) \cap V(Y) \ne \emptyset$ by Part~\ref{lemma:RequivH2} of Proposition~\ref{lemma:RequivH}, then $\pi^+_W/R_{V(\sX)} (X//R_{V(\sX)}) = \pi^+_W (X)//R_{V(\sX)} = X//R_{V(\sX)} = Y//R_{V(\sX)} = \pi^+_W (Y)//R_{V(\sX)} = \pi^+_W/R_{V(\sX)} (Y//R_{V(\sX)})$. Second, if $V(W) \cap V(X) = \emptyset$, equivalently $V(W) \cap V(Y) = \emptyset$ by Part\ref{lemma:RequivH2} of Proposition~\ref{lemma:RequivH}, then Part~\ref{lemma:RequivH1} of Proposition~\ref{lemma:RequivH} implies $V(W//R_{V(\sX)}) \cap V(X//R_{V(\sX)}) = \emptyset$ and $V(W//R_{V(\sX)}) \cap V(Y//R_{V(\sX)}) = \emptyset$. Then $\pi^+_W/R_{V(\sX)} (X//R_{V(\sX)}) = \pi^+_W (X)//R_{V(\sX)} = (X \oplus W)//R_{V(\sX)} = X//R_{V(\sX)} \oplus W//R_{V(\sX)} = Y//R_{V(\sX)} \oplus W//R_{V(\sX)} = (Y \oplus W)//R_{V(\sX)} = \pi^+_W (Y)//R_{V(\sX)} = \pi^+_W/R_{V(\sX)} (Y//R_{V(\sX)})$. Therefore $\pi^+_W/R_{V(\sX)}$ is well defined.

To see that $\pi^+_W/R_{V(\sX)} = \pi^+_{W//R_{V(\sX)}}$, let $X//R_{V(\sX)} \in \X/R_{V(\sX)}$. First, if $V(W) \cap V(X) \ne \emptyset$, equivalently $V(W//R_{V(\sX)}) \cap V(X//R_{V(\sX)}) \ne \emptyset$ by Part~\ref{lemma:RequivH1} of Proposition~\ref{lemma:RequivH}, then $\pi^+_W/R_{V(\sX)} (X//R_{V(\sX)}) = \pi^+_W (X)//R_{V(\sX)} = X//R_{V(\sX)} = \pi^+_{W//R_{V(\sX)}} (X//R_{V(\sX)})$. Second, if $V(W) \cap V(X) = \emptyset$, equivalently $V(W//R_{V(\sX)}) \cap V(X//R_{V(\sX)}) = \emptyset$ from Part~\ref{lemma:RequivH1} of Proposition~\ref{lemma:RequivH}, then $\pi^+_W/R_{V(\sX)} (X//R_{V(\sX)}) = \pi^+_W (X)//R_{V(\sX)} = (X \oplus W)//R_{V(\sX)} = X//R_{V(\sX)} \oplus W//R_{V(\sX)} = \pi^+_{W//R_{V(\sX)}} (X//R_{V(\sX)})$. Therefore $\pi^+_W/R_{V(\sX)} = \pi^+_{W//R_{V(\sX)}}$.
\end{proof}

\section{Concluding remarks}
Hypergraphs are of interest within pure mathematics as well as in applications of mathematics, in the latter case because they provide a general framework for modelling higher-order interactions in networks. Hypergraph transformations allow for a formal description of structural modifications of hypergraphs, and in particular can model dynamic properties of networks. Function-based forms of hypergraph transformations are important as they can be incorporated into larger mathematical structures and are readily applicable for modelling the physical world, however no suitable theory of function-based hypergraph transformations exists in the literature.

In this article we present a new general theory for function-based hypergraph transformations which are defined on finite families of finite hypergraphs. Our notion of a hypergraph transformation modifies a hypergraph by replacing certain connected components of the hypergraph, according to the collection of distinguished hypergraphs associated with the transformation. In this way, a given hypergraph transformation replaces the same subset of connected components in any hypergraph in its domain with the same new connected components, thereby ensuring consistency of action.

We establish sufficient conditions for the commutativity of a given set of hypergraph transformations, based on a notion of pairwise disjointness for the transformations. We also demonstrate how a hypergraph transformation can be modified to obtain a new transformation by appropriately modifying the collection of distinguished hypergraphs. Further, since quotient hypergraphs can enable the simplification and comparison of hypergraphs, we consider a notion of a quotient hypergraph transformation.

Finally, to illustrate the general theory we provide specific examples of hypergraph transformations that add or delete a set of hyperedges or a hypergraph, which comprise fundamental transformations of hypergraphs.

\section*{Declaration of competing interest}
The author declares that there is no conflict of interest related to the research presented in this manuscript.

\section*{Data availability}
No data was used for the research described in the manuscript.

\clearpage

\clearpage

\newcommand{\noop}[1]{}

\end{document}